\tikzstyle{vertex}=[circle, draw, inner sep=0pt, minimum size=6pt]
\DeclareMathOperator{\Aut}{Aut}
\DeclareMathOperator{\Inn}{Inn}
\DeclareMathOperator{\Fit}{Fit}
\DeclareMathOperator{\PSL}{PSL}
\newcommand{\cP}{\mathcal{P}}
\newcommand{\cS}{\mathcal{S}}
\newcommand{\cQ}{\mathcal{Q}}
\newcommand{\cR}{\mathcal{R}}
\newcommand{\cI}{\mathcal{I}}
\newcommand{\cC}{\mathcal{C}}
\DeclareMathOperator{\cd}{cd}
\DeclareMathOperator{\eo}{eo}
\DeclareMathOperator{\cs}{cs}
\newcommand{\nor}{\vartriangleleft}
\newcommand{\Z}{\mathbb{Z}}
\renewcommand{\gcd}[1]{\operatorname{gcd}\left\{{#1}\right\}}
\def\imod#1{\allowbreak\mkern10mu({\operator@font mod}\,\,#1)}
\theoremstyle{plain}
\newtheorem{theoremN}{Theorem}[section]
\newtheorem{corollaryN}[theoremN]{Corollary}
\newtheorem{lemmaN}[theoremN]{Lemma}
\theoremstyle{definition}
\newtheorem*{definition}{Definition}
\theoremstyle{remark}
\newtheorem*{remark}{Remark}
\newcommand{\tightoverset}[2]{%
  \mathop{#2}\limits^{\vbox to -.5ex{\kern-0.75ex\hbox{$#1$}\vss}}}
\newcommand{\tikzoverset}[2]{%
  \tikz[baseline=(X.base),inner sep=0pt,outer sep=0pt]{%
    \node[inner sep=0pt,outer sep=0pt] (X) {$#2$};
    \node[yshift=1pt] at (X.north) {$#1$};
}}
\newcommand{\overharpo}{\tikzoverset{\rightharpoonup}}
\begin{document}
\title[Prime Graphs of Several Classes of Finite Groups]{The Prime Graphs of Some Classes of Finite Groups}

\author[Florez, Higgins, Huang, Keller, Shen, and Yang]{Chris Florez, Jonathan Higgins, Kyle Huang, Thomas Michael Keller, Dawei Shen, and Yong Yang}

\address{Christopher Florez, Department of Mathematics, David Rittenhouse Lab, University of Pennsylvania, 209 South 33rd Street, Philadelphia, PA 19104-6395, USA}
\email{cflorez@sas.upenn.edu}

\address{Jonathan Higgins, Mathematics and Computer Science Department, Wheaton College, 501 College Ave, Wheaton, IL 60187, USA}
\email{jahiggins11@icloud.com}

\address{Kyle Huang, Mathematics Department, University of California-Berkeley, 2227 Piedmont Avenue, Berkeley, CA 94709, USA}
\email{kyle.huang@berkeley.edu}

\address{Thomas M. Keller, Department of Mathematics, Texas State University, 601 University Drive, San Marcos, TX 78666-4616, USA}
\email{keller@txstate.edu}

\address{Dawei Shen, Department of Mathematics and Statistics, Washington University in St. Louis, 1 Brookings Drive, St. Louis, MO 63105, USA}
\email{shen.dawei@wustl.edu}

\address{Yong Yang, Department of Mathematics, Texas State University, 601 University Drive, San Marcos, TX 78666-4616, USA}
\email{yang@txstate.edu}

\subjclass[2020] {Primary: 20D10, Secondary: 05C25}
\keywords {Prime graph, Solvable group, $3$-colorable, triangle-free}

\begin{abstract}
In this paper we study prime graphs of finite groups. The \textit{prime graph} of a finite group $G$, also known as the Gruenberg-Kegel graph, is the graph with vertex set \{primes dividing $|G|$\} and an edge $p$-$q$ if and only if there exists an element of order $pq$ in $G$.
In finite group theory, studying the prime graph of a group has been an
important topic for the past almost half century. Only recently prime graphs of solvable groups
have been characterized in graph theoretical terms only. 
In this paper, we continue this line of research and give complete characterizations of several classes of groups, including groups of square-free order, metanilpotent groups, groups of cube-free order, and, for any $n\in \mathbb{N}$, solvable groups of $n^\text{th}$-power-free order.\\
We also explore the prime graphs of groups whose composition factors are cyclic or $A_5$ and draw connections to a conjecture of Maslova. We then propose an algorithm that recovers the prime graph from a dual prime graph.
\end{abstract}

\maketitle
\section{Introduction}

The topic of this paper is prime graphs of finite groups. The prime graph of a finite group is the graph whose vertices are the prime numbers
dividing the order of the group, with two vertices being linked by an edge if and only if their 
product divides the order of some element of the group. Prime graphs were introduced by
Gruenberg and Kegel in the 1970s to investigate certain cohomological questions associated with integral representations of finite groups and have been an object of constant study ever since.
They were among the first graphs assigned to groups. This idea of representing group theoretical
data via graphs and describing them via graph theoretical notions is proved to be so successful 
that today there is a myriad of graphs (e.g. character degree graphs, conjugacy class size
graphs, etc.) and a whole industry of exploring them. For this reason, today prime graphs are
often referred to as Gruenberg-Kegel graphs.\\

The focus in the study of prime graphs has been on simple groups for a long time, so that these graphs today are well understood (see \cite{kondratev1989prime} and \cite{WILLIAMS1981487}).
In contrast, the main result of \cite{GRUBER2015397}, somewhat surprisingly, is a purely graph theoretical
characterization of prime graphs of solvable groups: A (simple) graph is the prime graph
of a finite solvable group if and only if its complement is triangle-free and 3-colorable.
This opened up new ways of studying prime graphs; for example,
simple groups whose prime graph is that of a finite solvable group have been characterized in
\cite{GM2018}. It also inspired similar characterization efforts for other graphs; For example, Dolfi, Pacifici, Sanus, and Sotomayor \cite{DPSS2020} have studied similar questions related to the
prime graphs of conjugacy class sizes and also found nice properties.\\

The purpose of this paper is to take the characterization of prime graphs of finite solvable groups in two directions: Specialization and generalisation. Regarding the former,
we ask what the prime graphs look like for classes of groups between nilpotent and solvable.
For nilpotent groups, the prime graph is obviously complete, and for solvable groups we have
the above characterization. Here we characterize the prime graphs for, among others, the
following classes of solvable groups: groups of square-free order and metanilpotent 
groups (Section 3), solvable groups of cube-free order, and, more generally, for any $n\in \mathbb{N}$, solvable groups of $n^\text{th}$-power-free order (Section 4).\\

 As to the generalization aspect, we take the first baby steps towards giving characterizations
of prime graphs of classes of groups that are not necessarily solvable any longer.
Groups of square-free order are always solvable, but we will actually characterize the
prime graphs of arbitrary groups of cube-free order (Section 5). Moreover, we will study the prime graphs
of a class of groups "minimally away" from being solvable, namely groups all of whose
composition factors are either cyclic or isomorphic to $A_5$ (Section 6). It will become clear that
leaving the realm of solvable groups is quite a challenging task in this area.\\

Finally we will shift our attention to the "dual prime graph", also known as common divisor graph,
whose vertices are the element orders of the group with two vertices being connected by an
edge if and only if they have a common divisor greater than 1 (Section 7). We present an algorithm that recovers the prime graph from a dual prime graph and actually present the result in a
much more general setting.\\

In Section $2$, we will briefly review what solvability implies about prime graphs, most of which is taken from \cite{GRUBER2015397}. \\



Before we begin, we introduce conventions and notations used throughout this paper. All graphs will be simple and all groups will be finite. Let $\pi(G)$ denote the set of all prime divisors of $|G|$, $\pi$ usually denotes some subset of $\pi(G)$, and $\pi'$ denotes $\pi(G)\setminus \pi$ unless stated otherwise. Since a prime graph has vertex set $\pi(G)$ and all graphs we consider are prime graphs, we use $\pi(G)$ and $V(F)$ interchangeably.\bigskip

In an undirected graph $F$, we denote an edge connecting $p$ and $q$ by $p\text{-}q$. In a directed graph $\overharpo{F}$, we denote an edge from $p$ to $q$ by $p\to q$. For $\pi\subset V(F)$, let $F[\pi]$ denote the induced subgraph of $F$ on the vertex set $\pi$. When we refer to a path in a directed graph, it is assumed that the path is directed. We refer to paths on $n+1$ vertices with $n$ edges as $n$-paths. Let $F\setminus \{p\text{-}q\}$ be the graph obtained by removing the edge between $p$ and $q$ from $E(F)$, if there exists one. We refer to graphs obtained by taking an induced subgraph and then removing edges as subgraphs of the original graph. \bigskip

We adopt the ATLAS notation and write $G=N.M$ if $N$ is a normal subgroup of $G$ such that $G/N\simeq M$. We write the Fitting subgroup of $G$ as $F(G)=F_1(G)$. We build the Fitting series $\{F_k\}$ of $G$ by taking $F_{k+1}/F_{k}=F(G/F_{k})$. Unless stated otherwise, $P$ will be an arbitrary Sylow $p$-subgroup of $G$. We use $H_\pi$ to denote a Hall $\pi$-subgroup of $G$. And when $\pi$ consists of two (or resp., $3$) primes $p$ and $q$ (and resp., $r$), we simply write $H_{pq}$ (or resp., $H_{pqr}$).

\section{Preliminary Terminology and Results}
We first briefly go through the main results from \cite{GRUBER2015397} to establish the setting.
\begin{definition}
A group $G$ is a \emph{$2$-Frobenius group} if $F_2$ is Frobenius with kernel $F_1$ and $G/F_1$ is Frobenius with kernel $F_2/F_1$. We refer to the Frobenius kernel of $G/F_1$ as the \emph{upper kernel} of $G$ and the Frobenius kernel $F_1$ as the lower kernel of $G$.
\end{definition}
Thus, the primes dividing $|F_2:F_1|$ must be disjoint from the primes dividing $|G:F_2|$ and $|F_1|$, since the Frobenius kernel and the Frobenius complement must have coprime order.
\begin{definition}\cite{GRUBER2015397} We categorize Frobenius groups and $2$-Frobenius groups of order divisible by at most $3$ primes as follows.
\begin{enumerate}
    \item $G$ is said to be Frobenius of type $(p,q)$ if its Frobenius complement is a $p$-group and its Frobenius kernel is a $q$-group.
    \item $G$ is said to be $2$-Frobenius of type $(p,q,r)$ if $F_1$ contains a Sylow $p$-subgroup, $F_2/F_1$ is a Sylow $q$-subgroup, and $G/F_2$ is an $r$-group.
    \item In particular, $G$ is said to be $2$-Frobenius of type $(p,q,p)$ if $G/F_2$ and $F_1$ are both $p$-groups and $F_2/F_1$ is a q-group.

\end{enumerate}
\end{definition}

By \cite[Theorem A]{WILLIAMS1981487}, any solvable group with a disconnected prime graph is Frobenius or $2$-Frobenius. So for $G$ solvable, for any edge $p\text{-}q$ in $\overline{\Gamma}$, $\Gamma[\{p,q\}]$, the prime graph of $H_{pq}$, is disconnected. This gives us that $H_{pq}$ is Frobenius or $2$-Frobenius. Since the Frobenius kernel and the Frobenius complement must have coprime order, $H_{pq}$ must be Frobenius of type $(p,q)$ or $(q,p)$ or 2-Frobenius of type $(p,q,p)$ or $(q,p,q)$.

\begin{definition}\cite{GRUBER2015397} 
For a finite solvable group $G$, we define an orientation of $\overline{\Gamma}(G)$ as follows.  Let $p\text{-}q$ be any edge in $\overline{\Gamma}(G)$. if $H_{pq}$ is Frobenius of type $(p,q)$ or $2$-Frobenius group of type $(p,q,p)$, we direct $p\text{-}q$ as $p\to q$. We call this orientation of $\overline{\Gamma}(G)$ the \emph{Frobenius digraph} of $G$, denoted $\overharpo{\Gamma}(G)$.
\end{definition}

\begin{lemmaN}\cite[Lemma 2.4]{GRUBER2015397}\label{pqr}
Let $G$ be solvable. If $r\to q\to p$ is a $2$-path in $\overharpo{\Gamma}(G)$, then $H_{pqr}$ is $2$-Frobenius of type $(p,q,r)$.
\end{lemmaN}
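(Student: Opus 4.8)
The plan is to exploit the local structure encoded by the two edges $r\to q$ and $q\to p$ simultaneously. By definition of the Frobenius digraph, the edge $q\to p$ tells us that $H_{pq}$ is Frobenius of type $(q,p)$ or $2$-Frobenius of type $(q,p,q)$, and the edge $r\to q$ tells us that $H_{qr}$ is Frobenius of type $(r,q)$ or $2$-Frobenius of type $(r,q,r)$. In particular, in $H_{pqr}$ there is no element of order $pr$ (since $p\text{-}r$ is an edge of $\overline{\Gamma}$), no element of order $pq$, and no element of order $qr$; so every element of $H_{pqr}$ has prime-power order on the set $\{p,q,r\}$ — the prime graph of $H_{pqr}$ has no edges. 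I would first reduce to studying $H := H_{pqr}$ directly, since element orders only shrink on passing to subgroups and a Hall $\{p,q,r\}$-subgroup exists by solvability.

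**Next I would analyze the Fitting subgroup of $H$.** The key structural input is that $q\to p$ forces the Sylow structure to be ``$q$ over $p$'': concretely, from $H_{pq}$ being Frobenius $(q,p)$ or $2$-Frobenius $(q,p,q)$ one extracts that $O_{p}(H_{pq})\neq 1$ and a Sylow $q$-subgroup acts Frobeniusly (hence fixed-point-freely) on it; similarly $r\to q$ gives that inside $H_{qr}$ a Sylow $r$-subgroup acts fixed-point-freely on a nontrivial normal $q$-subgroup. I would aim to show $F(H)$ is a $p$-group: if some Sylow $q$- or $r$-subgroup were normal (or met the Fitting subgroup nontrivially), one could build an element of order $pq$ or $qr$ by coprime action and the Hall--Frobenius arguments already in \cite{GRUBER2015397}, contradicting that the prime graph of $H$ is edgeless. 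Once $F_1(H)=O_p(H)$ is established, pass to $H/F_1(H)$, a $\{q,r\}$-group with edgeless prime graph; the same dichotomy (Williams' theorem, or Lemma~\ref{pqr}'s ancestors) shows $H/F_1(H)$ is Frobenius with kernel $O_q(H/F_1(H))$ and complement an $r$-group, so $F_2(H)/F_1(H)$ is a $q$-group and $H/F_2(H)$ is an $r$-group — exactly the data of a $2$-Frobenius group of type $(p,q,r)$, once one checks that $H/F_1(H)$ is itself Frobenius (not $2$-Frobenius), which follows because it is a $\{q,r\}$-group and the only relevant type is $(r,q)$ given $r\to q$.

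**The remaining point** is to verify the full $2$-Frobenius axioms: that $F_2(H)$ is Frobenius with kernel $F_1(H)$. This is where I would use that $q\to p$ means a Sylow $q$-subgroup acts fixed-point-freely on $O_p(H)$; combined with $O_p(F_2(H)/\text{something})$ being trivial (so that $F_2(H)$ has a normal $p$-complement forming a Frobenius complement, or rather $F_2(H)/F_1(H)$ acts freely on $F_1(H)$), one gets the Frobenius condition. I would lean on the lemmas in Section~2 of \cite{GRUBER2015397} that relate the orientation of a single edge to fixed-point-free action; the content of Lemma~\ref{pqr} is essentially that these two local fixed-point-free actions are compatible and stack into a genuine $2$-Frobenius tower.

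**The main obstacle** I anticipate is ruling out ``mixed'' Fitting subgroups — i.e., showing that $F(H)$ cannot involve $q$ or $r$, and more subtly that the action of $H_{qr}$ on $O_p(H)$ doesn't degenerate (e.g. a Sylow $r$-subgroup could a priori centralize $O_p(H)$ while acting freely only on the $q$-part). Handling this cleanly requires carefully invoking coprime-action results (a group acting coprimely and fixed-point-freely is forced by Thompson's theorem to be nilpotent, and one tracks which Sylow subgroups can act with fixed points) and the classification of solvable groups with disconnected prime graph as Frobenius or $2$-Frobenius applied to the three rank-$2$ Hall subgroups $H_{pq}, H_{qr}, H_{pr}$ in tandem. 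Everything else is bookkeeping on the Fitting series.
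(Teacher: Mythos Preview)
The paper does not prove this lemma; it is quoted without proof from \cite[Lemma~2.4]{GRUBER2015397} as a preliminary, so there is no in-paper argument to compare against.

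Your sketch, however, contains a genuine error at the outset. You write ``in $H_{pqr}$ there is no element of order $pr$ (since $p\text{-}r$ is an edge of $\overline{\Gamma}$)'', but the hypothesis $r\to q\to p$ only places the edges $r\text{-}q$ and $q\text{-}p$ in $\overline{\Gamma}$; it says nothing about $p\text{-}r$. In fact, since $G$ is solvable, Lucido's Three Primes Lemma forbids $\{p,q,r\}$ from being a triangle in $\overline{\Gamma}$, so $p\text{-}r$ must lie in $\Gamma$, not in $\overline{\Gamma}$. Thus $H_{pqr}$ \emph{does} contain an element of order $pr$, and its prime graph is not edgeless: rather, $q$ is isolated while $p$ and $r$ are joined. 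This is entirely consistent with the conclusion, because a $2$-Frobenius group of type $(p,q,r)$ can and typically does contain elements of order $pr$: the definition only demands that $F_2/F_1$ act fixed-point-freely on $F_1$ and that $H/F_2$ act fixed-point-freely on $F_2/F_1$, with no constraint on how the $r$-part interacts with the $p$-part.

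This mistake undermines the step where you argue $F(H)$ is a pure $p$-group by ``building an element of order $pq$ or $qr$'' from a mixed Fitting subgroup, since your contradiction relied on the false edgeless premise; note too that the paper's own definition of type $(p,q,r)$ only requires that $F_1$ \emph{contain} a Sylow $p$-subgroup, so $F_1$ may well carry an $r$-part. The salvageable core of your plan is correct: $q$ is isolated in $\Gamma(H_{pqr})$, so Williams' theorem gives that $H_{pqr}$ is Frobenius or $2$-Frobenius, and one then rules out the wrong configurations by matching against the known orientations $q\to p$ and $r\to q$ on the Hall $\{p,q\}$- and $\{q,r\}$-subgroups. But the argument needs to be rewritten with the correct picture of $\Gamma(H_{pqr})$ in mind.
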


\begin{theoremN}\cite[Corollary 2.7]{GRUBER2015397}
The Frobenius Digraph of a solvable group does not contain a directed $3$-path
\end{theoremN}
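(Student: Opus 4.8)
\emph{Proof proposal.}

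The plan is to suppose, for a contradiction, that $\overharpo{\Gamma}(G)$ contains a directed $3$-path $p_1\to p_2\to p_3\to p_4$, to apply Lemma~\ref{pqr} to its two directed $2$-subpaths, and to extract contradictory information about a Sylow $p_3$-subgroup $R$ of $G$.

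From the subpath $p_2\to p_3\to p_4$, Lemma~\ref{pqr} gives that $H_{p_2p_3p_4}$ is $2$-Frobenius of type $(p_4,p_3,p_2)$. By the definition of a $2$-Frobenius group its $F_2$ is a Frobenius group with kernel $F_1$ and complement isomorphic to $F_2/F_1$, and by the type $F_2/F_1$ is a Sylow $p_3$-subgroup, hence isomorphic to $R$; thus $R$ is (isomorphic to) a Frobenius complement. Likewise $H_{p_2p_3p_4}/F_1$ is a Frobenius group with kernel $F_2/F_1\cong R$ and complement $H_{p_2p_3p_4}/F_2$, a \emph{nontrivial} $p_2$-group, which therefore acts fixed-point-freely on $R$. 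Since $R$ is a $p_3$-group and a Frobenius complement, it is cyclic or generalized quaternion; but if it were generalized quaternion then $p_3=2$, $p_2$ is odd, and no nontrivial $p_2$-group can act fixed-point-freely on $R$: when $|R|\ge 16$ the group $\Aut(R)$ is a $2$-group, and when $R\cong Q_8$ one has $\Aut(R)\cong S_4$, which has no element of order $p_2>3$ and whose elements of order $3$ fix the central involution of $R$. Hence $R$ is cyclic.

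Next I would use the subpath $p_1\to p_2\to p_3$. By Lemma~\ref{pqr}, $H:=H_{p_1p_2p_3}$ is $2$-Frobenius of type $(p_3,p_2,p_1)$; set $F:=F(H)=F_1(H)$. By the coprimality noted after the definition, $F$ is a $p_2'$-group, and by the type it contains a Sylow $p_3$-subgroup, so $F=R\times F_{p_1}$ with $F_{p_1}$ the (possibly trivial) $p_1$-part of $F$; as $R$ is characteristic in $F\nor H$, we get $R\nor H$. Since $R$ is cyclic it is abelian and is centralized by $F_{p_1}$, so $F\le C_H(R)$, and $C_H(R)\nor H$. Thus $H/C_H(R)$ is a quotient of $H/F$, which by the definition of a $2$-Frobenius group is a Frobenius group with kernel $F_2/F$ a Sylow $p_2$-subgroup $Q$ and complement $H/F_2$ a $p_1$-group; in such a Frobenius group the derived subgroup contains $[F_2/F,\,H/F_2]=F_2/F$, so the abelianization of $H/F$ is a $p_1$-group. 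As $H/C_H(R)$ embeds in the abelian group $\Aut(R)$, it is an abelian quotient of $H/F$, hence a $p_1$-group.

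Therefore $|H:C_H(R)|$ is a power of $p_1$, so the normal subgroup $C_H(R)$ contains a Sylow $p_2$-subgroup $Q$ of $H$; then $[Q,R]=1$, and a product of an element of order $p_2$ in $Q$ with an element of order $p_3$ in $R$ is an element of order $p_2p_3$ in $H\le G$. This contradicts the fact that $p_2\text{-}p_3$ is an edge of $\overline{\Gamma}(G)$, finishing the proof. I anticipate that the genuinely fiddly step is ruling out the generalized quaternion case for $R$ (the analysis of $\Aut(Q_8)\cong S_4$); everything else is a direct combination of Lemma~\ref{pqr} with the classical fact that the Sylow subgroups of a Frobenius complement are cyclic or generalized quaternion and with elementary properties of fixed-point-free action.
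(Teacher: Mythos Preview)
The paper itself does not prove this statement; it is quoted from \cite[Corollary~2.7]{GRUBER2015397} as a preliminary result with no proof given, so there is nothing in the paper to compare against directly. Your argument is correct and self-contained: from the two overlapping $2$-paths you extract that a Sylow $p_3$-subgroup $R$ of $G$ is isomorphic both to a Frobenius complement (hence cyclic or generalized quaternion) and to a Frobenius kernel admitting a fixed-point-free action of a nontrivial $p_2$-group, and your case analysis of $\Aut(Q_8)\cong S_4$ and of $\Aut(Q_{2^n})$ being a $2$-group for $n\ge 4$ correctly eliminates the quaternion possibility. Then in $H:=H_{p_1p_2p_3}$ the cyclic normal subgroup $R$ has abelian automorphism group, so $H/C_H(R)$ is an abelian quotient of the Frobenius group $H/F$; since the derived subgroup of a Frobenius group contains its kernel, this quotient is a $p_1$-group, whence a Sylow $p_2$-subgroup centralizes $R$ and you obtain an element of order $p_2p_3$, the desired contradiction. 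This is in fact the line of argument used in the cited reference.
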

\begin{lemmaN}[Lucido's Three Primes Lemma, \cite{lucido1999diameter}]
Let $G$ be a solvable group. If $p,q,r$ are distinct primes dividing $|G|$, $G$ contains an element of order the product of two of these primes.
\end{lemmaN}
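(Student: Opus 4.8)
The plan is to reduce to a Hall $\{p,q,r\}$-subgroup and then argue through the Frobenius digraph, using Lemma~\ref{pqr}. First, since $G$ is solvable it has a Hall $\{p,q,r\}$-subgroup $H$, and an element of $H$ whose order is a product of two of $p,q,r$ is such an element of $G$; so I would assume from the outset that $\pi(G)=\{p,q,r\}$. Then I would argue by contradiction: suppose $G$ has no element of order $pq$, $qr$, or $rp$. Now $\overline{\Gamma}(G)$ is the complete graph $K_3$ on $\{p,q,r\}$, so in particular $\Gamma(G)$ is disconnected, $G$ is Frobenius or $2$-Frobenius, and the Frobenius digraph $\overharpo{\Gamma}(G)$ is a well-defined orientation of $K_3$. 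Every orientation of $K_3$ — transitive or a directed $3$-cycle — contains a directed $2$-path, say $a\to b\to c$ with $\{a,b,c\}=\{p,q,r\}$. By Lemma~\ref{pqr} this forces $G=H_{abc}$ to be $2$-Frobenius of type $(c,b,a)$; hence $F_2/F_1$ is a Sylow $b$-subgroup, $G/F_2$ is an $a$-group, and $F_1=F(G)$ is nilpotent with $c\mid|F_1|$ and $b\nmid|F_1|$, so $\pi(F_1)$ is $\{c\}$ or $\{a,c\}$.

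The crux is then to produce an element of order $ac$, contradicting the assumption. If $\pi(F_1)=\{a,c\}$ this is immediate, since the nilpotent group $F_1$ then contains an element of order $ac$. If $F_1$ is a $c$-group, then comparing the orders of the three layers shows $|F_2|$ is coprime to the $a$-part of $|G|$ while $|G/F_2|$ equals that $a$-part, so by Schur–Zassenhaus $G=F_2\rtimes A$ with $A$ a Sylow $a$-subgroup. I would then show $A$ must have a nontrivial fixed point on $F_1$: for then a $1\neq h\in C_{F_1}(g)$ with $1\neq g\in A$ yields an element of order $ac$ in $\langle g,h\rangle$. If instead $A$ acted fixed-point-freely on $F_1$, then, since the image of $A$ in the Frobenius group $G/F_1$ is a complement to its kernel $F_2/F_1$ — hence a Frobenius complement acting fixed-point-freely on $F_2/F_1$ — one gets $C_{F_2}(g)\leq F_1$ for $1\neq g\in A$ and therefore $C_{F_2}(g)=1$; so $A$ would act fixed-point-freely on all of $F_2$, making $G=F_2\rtimes A$ a Frobenius group with kernel $F_2$ and forcing $F(G)=F_2\supsetneq F_1=F(G)$, which is absurd.

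I expect the main obstacle to be exactly this last point: the structural bookkeeping for $2$-Frobenius groups of type $(c,b,a)$, together with the observation that a group cannot simultaneously be $2$-Frobenius and Frobenius (the two are distinguished through the Fitting subgroup, since for $G$ $2$-Frobenius one has $F(G)=F_1\subsetneq F_2$ whereas a Frobenius kernel is its own group's Fitting subgroup). Everything else — the Hall reduction, extracting a directed $2$-path from $K_3$, invoking Lemma~\ref{pqr} — is routine.

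There is a shorter but essentially equivalent route worth noting: Williams' analysis of solvable groups with disconnected prime graph in fact shows such a graph has at most two connected components, so three isolated vertices is impossible outright; the argument above is a way of seeing this concretely with the Frobenius-digraph machinery already set up in Section~2.
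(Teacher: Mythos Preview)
The paper does not prove this lemma at all; it is simply quoted from \cite{lucido1999diameter} as a known result. Your argument is correct and supplies a proof from within the Frobenius-digraph framework of Section~2, so in that sense it genuinely adds something the paper omits.

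One point to watch is potential circularity: you invoke Lemma~\ref{pqr}, and in \cite{GRUBER2015397} the proof of that lemma may already appeal to Lucido's result to know that $\Gamma(H_{pqr})$ is disconnected. In your setting this is harmless, since once you have reduced to $\pi(G)=\{a,b,c\}$ with $b$ isolated in $\Gamma(G)$, the prime graph is disconnected regardless of the status of the edge $a\text{-}c$; Williams' theorem then applies directly and the $2$-Frobenius analysis goes through. The structural endgame---splitting off $A\in\Syl{a}{G}$ by Schur--Zassenhaus and arguing that a fixed-point-free action of $A$ on $F_1$ would force $G$ to be Frobenius with kernel $F_2$, contradicting $F(G)=F_1$---is clean and correct (Frobenius kernel equals Fitting subgroup is standard). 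Your closing remark is also on target: the two-component bound for solvable groups with disconnected prime graph, which is part of Williams' classification in \cite{WILLIAMS1981487}, already rules out three isolated vertices, and this is the quickest way to see the lemma.
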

\begin{theoremN}[Gallai-Hasse-Roy-Vitaver] \label{graphtheory}
A graph is $k$-colorable if and only if there exists an orientation that does not contain a $k$-path.
\end{theoremN}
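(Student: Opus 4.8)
The statement is the classical Gallai--Hasse--Roy--Vitaver theorem, and I would prove the two implications separately. For the direction ``$k$-colorable $\Rightarrow$ a good orientation exists'': given a proper $k$-coloring $c\colon V(F)\to\{1,\dots,k\}$, orient every edge $\{u,v\}$ from the endpoint of smaller color to the endpoint of larger color, which is well defined since $c(u)\neq c(v)$ whenever $u$ and $v$ are adjacent. The color strictly increases along any directed path, so such a path meets at most $k$ vertices and hence has at most $k-1$ edges; thus this orientation contains no directed $k$-path.

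For the converse, let $\overharpo{F}$ be an orientation of $F$ with no directed $k$-path. The naive idea is to color each vertex by the number of edges in a longest directed path starting at it, but this quantity is only well defined --- and only separates adjacent vertices --- when the orientation is acyclic, which we are not given. So the first step is to pass to the spanning subdigraph $\overharpo{F'}$ on $V(F)$ keeping a maximal (with respect to inclusion) set of the arcs of $\overharpo{F}$ that induces no directed cycle. Now set $c(v)$ equal to the number of edges in a longest directed path of $\overharpo{F'}$ starting at $v$; this is finite because $\overharpo{F'}$ is acyclic, and since $\overharpo{F'}$ is a subdigraph of $\overharpo{F}$ it still has no directed $k$-path, so $c(v)\in\{0,1,\dots,k-1\}$.

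It then remains to check that $c$ is a proper coloring, i.e.\ that the two ends of each edge of $F$ receive different values. If an edge is oriented $u\to v$ in $\overharpo{F'}$, then prepending this arc to a longest directed path out of $v$ yields a strictly longer directed path out of $u$ --- and it is still a path, since a repeated vertex would close a directed cycle in $\overharpo{F'}$ --- so $c(u)>c(v)$. If instead the edge $u\to v$ of $\overharpo{F}$ was not kept in $\overharpo{F'}$, then by maximality adding it back to $\overharpo{F'}$ creates a directed cycle, so $\overharpo{F'}$ contains a directed path from $v$ to $u$; concatenating that path with a longest directed path out of $u$ --- again genuinely a path, by acyclicity --- shows $c(v)>c(u)$. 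Either way $c(u)\neq c(v)$, so $c$ is proper and $F$ is $k$-colorable.

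The only genuinely nontrivial point, and the step I would flag as the main obstacle, is exactly this treatment of directed cycles in the converse: one cannot work with the given orientation directly (a ``back arc'' of a cycle need not have endpoints separated by the longest-path function), but must instead replace $\overharpo{F}$ by a maximal acyclic subdigraph, and then invoke acyclicity a second time to know that the concatenation of two directed paths sharing only their common endpoint is again a path and not merely a walk. Everything else is routine, keeping in mind the paper's convention that a $k$-path has $k$ edges and $k+1$ vertices.
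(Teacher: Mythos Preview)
Your proof is correct and is the standard argument for the Gallai--Hasse--Roy--Vitaver theorem. However, the paper does not actually prove this statement: it is listed in Section~2 among the preliminary results, cited as a classical theorem, and used as a black box (for instance in the proof of Theorem~\ref{first char.}), so there is no proof in the paper to compare yours against.
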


\begin{definition}
The canonical orientation of a graph $F$ with a $3$-coloring by Red, Green and Blue is obtained by directing all edges
\begin{itemize}
    \item from red to green
    \item from green to blue
    \item from red to blue
\end{itemize}
Notice that this orientation contains no directed $3$-paths.
\end{definition}

\begin{theoremN}
A graph is $2$-colorable if and only if it is triangle free.
\end{theoremN}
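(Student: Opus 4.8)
The plan is to prove the two implications separately: the forward direction (a $2$-colorable graph is triangle-free) by a one-line pigeonhole argument, and the reverse direction (a triangle-free graph is $2$-colorable) by building an explicit $2$-coloring from a breadth-first layering of the graph, with the Gallai--Hasse--Roy--Vitaver theorem (Theorem~\ref{graphtheory}) available as the conceptual bridge in the case $k=2$.

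For the forward direction, suppose $F$ admits a proper $2$-coloring, so that $V(F)$ splits into two color classes $A$ and $B$, each an independent set, and every edge of $F$ joins $A$ to $B$. If $F$ contained a triangle on vertices $p,q,r$, then all three edges $p\text{-}q$, $q\text{-}r$, $p\text{-}r$ would be present; but by pigeonhole two of $p,q,r$ share a color, producing an edge inside $A$ or inside $B$ and contradicting independence. Hence $F$ is triangle-free.

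For the reverse direction I would fix a spanning forest of $F$, root each component, and color every vertex by the parity of its distance to its root; by Theorem~\ref{graphtheory} with $k=2$ this is equivalent to orienting $F$ so that each vertex is a pure source or pure sink, i.e.\ so that no directed $2$-path $p \to q \to r$ appears. This coloring is automatically proper on every forest edge, so the only possible failure is a non-forest edge $p\text{-}q$ whose endpoints receive the same color; such an edge together with the forest creates a closed walk of odd length. The main obstacle, and the step I expect to demand the most care, is to turn the triangle-free hypothesis into a contradiction here: I would take a shortest offending configuration and argue that its chords force it down to three mutually adjacent vertices, i.e.\ an actual triangle in $F$. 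Controlling this reduction---ruling out that the shortest same-color closed walk survives as a longer odd cycle---is the crux on which the whole converse rests.
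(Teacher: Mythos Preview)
The converse direction of your proposal has an irreparable gap: the statement as written is simply false, so no proof of that direction can succeed. The $5$-cycle $C_5$ is triangle-free but not $2$-colorable (its chromatic number is $3$); more dramatically, the Gr\"otzsch graph---which the paper itself mentions later---is triangle-free with chromatic number $4$. Your instinct that the ``crux'' step is where the difficulty lies is exactly right, but the difficulty is fatal: when you take a shortest odd cycle in a triangle-free graph, there is no mechanism by which chords force it down to a triangle. Indeed, a shortest odd cycle is automatically induced (any chord would split it into two strictly shorter cycles, one of which is odd, contradicting minimality), so in $C_5$ the shortest odd cycle has length $5$ and stays that way.

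The paper itself provides no proof of this theorem, and in the places where bipartiteness is actually needed (for instance in Theorem~\ref{first char.}) it appeals directly to Theorem~\ref{graphtheory} rather than to this statement. So there is nothing to compare your argument against; the statement appears to be a misprint. The correct classical fact is that a graph is $2$-colorable if and only if it contains no \emph{odd cycle}, and your BFS-parity construction is precisely the standard proof of that. Your forward direction is fine as written.
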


Finally, we have the main theorem of \cite{GRUBER2015397}. 
\begin{theoremN}\cite[Theorem 2.10]{GRUBER2015397} \label{PG of Solvable Groups}
A graph $F$ is isomorphic to the prime graph of some solvable group if and only if $\overline{F}$ is $3$-colorable and triangle free. 
\end{theoremN}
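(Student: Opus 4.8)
Suppose $F\cong\Gamma(G)$ for a solvable group $G$; identify $F$ with $\Gamma(G)$. First I would show $\overline F$ is triangle-free: if $p,q,r$ were the vertices of a triangle in $\overline F$, then $G$ would have no element of order $pq$, of order $qr$, or of order $pr$, contradicting Lucido's Three Primes Lemma. Next, $\overline F$ is $3$-colorable: the Frobenius digraph of $G$ is an orientation of $\overline F=\overline{\Gamma}(G)$, and by \cite[Corollary 2.7]{GRUBER2015397} it contains no directed $3$-path, so by the Gallai--Hasse--Roy--Vitaver theorem $\overline F$ is $3$-colorable.

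\textbf{The backward direction: reduction.} Now assume $\overline F$ is triangle-free and $3$-colorable; I want a solvable $G$ with $\Gamma(G)\cong F$. The first step is a reduction to the case $\overline F$ connected. If $G_1,\dots,G_k$ have pairwise coprime orders, then an element $(x_1,\dots,x_k)$ of $G_1\times\cdots\times G_k$ has order the product of the $|x_i|$, so $\Gamma(G_1\times\cdots\times G_k)$ is the join of the $\Gamma(G_i)$, i.e.\ its complement is the disjoint union of the $\overline{\Gamma(G_i)}$. Since $F$ need only be realized up to isomorphism, I may assign to the connected components of $\overline F$ pairwise disjoint sets of primes and realize each component separately; each component is again triangle-free and $3$-colorable. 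So assume $\overline F$ is connected.

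\textbf{The backward direction: construction.} Fix a proper $3$-coloring of $\overline F$ with colour classes $\pi_R,\pi_G,\pi_B$ (Red, Green, Blue) and take the canonical orientation (Red$\to$Green, Green$\to$Blue, Red$\to$Blue), which has no directed $3$-path. I would then build $G$ as an iterated semidirect product layered by the three colour classes: a bottom layer built from Blue primes, a middle layer from Green primes, a top layer from Red primes, with each higher layer acting on the ones below. The point is to arrange, for each directed edge $p\to q$ of $\overline F$, that the Hall $\{p,q\}$-subgroup is Frobenius of type $(p,q)$ (when $p,q$ lie in adjacent colour classes) or $2$-Frobenius of type $(p,q,p)$ (when the action of $p$ on $q$ passes through the intermediate layer); fixed-point-free actions on modules over a suitable prime field $\mathbb F_{q^n}$ always exist, which makes each such Frobenius/$2$-Frobenius piece available. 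Dually, for each non-edge of $\overline F$ — that is, each edge of $F$ — the corresponding elements must be made to commute, so that $G$ really contains an element of that order. Solvability is immediate from the construction.

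\textbf{Where the work is.} The forward direction is essentially bookkeeping on top of the cited results. The real content is the construction and the verification that $\Gamma(G)$ is \emph{exactly} $F$: one must check that the Frobenius and $2$-Frobenius structure of the relevant Hall subgroups kills precisely the edges of $\overline F$ while every non-edge of $\overline F$ is witnessed by a genuine element of order $pq$. The delicate part is getting the fixed-point-free actions to nest consistently across the three layers — so that, for instance, a Red--Blue pair becomes $2$-Frobenius of type $(p,q,p)$ rather than accidentally producing an element of order $pr$, and conversely a Red--Red (or other same-class) pair retains its commuting element and is not swallowed by a Frobenius action coming from interactions with lower layers. Controlling precisely these interactions is exactly where the absence of directed $3$-paths (equivalently, the $3$-coloring) is used, and it is the main obstacle to completing the proof.
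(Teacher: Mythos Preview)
Your proposal is correct and follows the paper's approach: the forward direction via Lucido's Three Primes Lemma together with Gallai--Hasse--Roy--Vitaver applied to the Frobenius digraph is exactly what the paper's remark sketches, and your three-layer semidirect product built from the canonical orientation of a $3$-coloring is the construction of \cite{GRUBER2015397} that the paper cites without reproducing in full.

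One small correction to your sketch of the construction: a direct Red$\to$Blue edge $p\to q$ is realized by a \emph{direct} fixed-point-free action of the $p$-layer on the $q$-layer, so the resulting Hall $\{p,q\}$-subgroup is Frobenius of type $(p,q)$, not $2$-Frobenius of type $(p,q,p)$. The $2$-Frobenius groups that actually arise in the construction are of type $(b,g,r)$ with three distinct primes lying along a directed $2$-path Red$\to$Green$\to$Blue (this is also why the Blue vertex at the end of such a path must carry a prime power of exponent divisible by $r$, cf.\ Lemma~\ref{big order bottom}). The type $(p,q,p)$ is only relevant in the \emph{forward} direction, where one is handed an arbitrary solvable group and must allow for either Frobenius or $2$-Frobenius Hall $\{p,q\}$-subgroups; in the backward construction one simply chooses the cleaner Frobenius option throughout.
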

\begin{remark}
The forward direction is easily obtained by combining the above results. If $F$ is the prime graph of some solvable group, $\overline{F}$ is triangle free by Lucido's Three Primes Lemma. The Frobenius Digraph gives an orientation that is $3$-path-free, so $\overline{F}$ is $3$-colorable by the Gallai-Hasse-Roy-Vitaver theorem.\bigskip

The backward direction involves constructing a solvable group for each graph whose complement is $3$-colorable and triangle free. Variations of this construction will be used later on in this paper multiple times. In particular, we are able to construct solvable groups of odd order for any graph whose complement is $3$-colorable and triangle free by avoiding $2$ when assigning primes for each vertex.
\end{remark}

Notice that in the above Theorem \ref{PG of Solvable Groups}, when constructing a solvable group for a given graph with a triangle free and $3$-colorable complement, we would often end up with a group of order divisible by large powers of primes. It is natural to conjecture that when we restrict the highest power of primes that could appear in the group order, we must have stronger properties on the corresponding prime graphs. This inspired us to study the prime graphs of groups of square-free, cube-free, and, more generally, $n^{\text{th}}$-power-free order. It turns out that our first approach to studying prime graphs of square-free order groups can be generalized for prime graphs of metanilpotent groups. But in the general case for groups of $n^{\text{th}}$-power-free order, we must adopt a different strategy, as we shall see in the next section.


\section{Groups between Metanilpotent and Square-free-order}
Groups of square-free order are metanilpotent, and thus solvable. Notice that if we only do the first part of the construction in Theorem \ref{PG of Solvable Groups} for all vertices that are not the end of a directed $2$-path, we will end up with a group of square-free order. Therefore, for each bipartite graph, there is a group of square-free order that admits it as the complement prime graph. In fact, we will see that it is exactly the condition on the complement prime graphs of groups of square-free order. And it can be generalized to the following theorem.
\begin{theoremN}\label{first char.}
The following two statements hold.
\begin{enumerate}
    \item Let $G$ be a metanilpotent group and $F$ be its prime graph. Then $\overline{F}$ is bipartite.
    \item If $F$ is a graph such that $\overline{F}$ is bipartite, then there exists a group of square-free order whose prime graph is isomorphic to $F$.
\end{enumerate}
\end{theoremN}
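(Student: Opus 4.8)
The two parts are almost independent: part~(1) is a short structural argument via the Frobenius digraph, and part~(2) is a direct construction of a $Z$-group. For part~(1), recall that $G$ is metanilpotent precisely when $F_2(G)=G$, and that metanilpotence passes to subgroups: if $N\trianglelefteq G$ is nilpotent with $G/N$ nilpotent and $H\le G$, then $H\cap N\trianglelefteq H$ is nilpotent and $H/(H\cap N)\cong HN/N\le G/N$ is nilpotent. Now suppose, toward a contradiction, that $G$ is metanilpotent but $\overline{\Gamma}(G)$ is not bipartite, hence not $2$-colorable. By the Gallai--Hasse--Roy--Vitaver theorem with $k=2$, every orientation of $\overline{\Gamma}(G)$ contains a directed $2$-path; in particular the Frobenius digraph $\overharpo{\Gamma}(G)$ contains some $r\to q\to p$. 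By Lemma~\ref{pqr}, $H_{pqr}$ (a Hall subgroup, which exists as $G$ is solvable) is $2$-Frobenius of type $(p,q,r)$, so writing $F_i=F_i(H_{pqr})$ we get $1< F_1< F_2< H_{pqr}$ with $F_1$ a $p$-group, $F_2/F_1$ a $q$-group, and $H_{pqr}/F_2$ a nontrivial $r$-group. Thus $F_2(H_{pqr})\neq H_{pqr}$, so $H_{pqr}$ is not metanilpotent, contradicting the fact that metanilpotence is inherited by subgroups.

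For part~(2), fix a bipartition $V(\overline{F})=X\sqcup Y$. Choose pairwise distinct primes $p_y$ for $y\in Y$; then, treating the vertices of $X$ one at a time, use Dirichlet's theorem on primes in arithmetic progressions to pick for each $x\in X$ a prime $p_x$ that is congruent to $1$ modulo the product of all $p_y$ with $x\text{-}y\in E(\overline{F})$ and is distinct from every prime chosen so far (if $x$ is isolated in $\overline{F}$, just take a fresh prime). Put $A=\prod_{x\in X}C_{p_x}$ and $B=\prod_{y\in Y}C_{p_y}$; these are cyclic of square-free order, and $|A|$ and $|B|$ are coprime. Define an action of $B$ on $A$ factorwise: $C_{p_y}$ acts on $C_{p_x}$ through an automorphism of order $p_y$ --- which exists since $p_y\mid p_x-1$ --- when $x\text{-}y\in E(\overline{F})$, and trivially otherwise. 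Since each $\Aut(C_{p_x})$ is abelian, the actions of the various $C_{p_y}$ commute, so this genuinely defines an action of $B$ on $A$; let $G=A\rtimes B$, a group of square-free order $\prod_{v\in V(F)}p_v$.

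To check $\Gamma(G)\cong F$ under $v\mapsto p_v$: since $\overline{F}$ is bipartite it has no edges inside $X$ or inside $Y$, so every within-part pair is an edge of $F$, and all such are realized in $G$ because $A$ and $B$ are cyclic. For a cross pair $x\in X$, $y\in Y$, the Sylow $p_x$-subgroup $C_{p_x}$ is normal in $G$, and a $p_y$-element of $G$ --- necessarily a conjugate of an element of $C_{p_y}$, as $|G|$ is square-free --- centralizes $C_{p_x}$ if and only if that element of $C_{p_y}$ does, i.e.\ if and only if $x\text{-}y\notin E(\overline{F})$; hence $G$ has an element of order $p_xp_y$ exactly when $x\text{-}y\notin E(\overline{F})$. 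Collating the cases, $E(\Gamma(G))$ is the complement of $E(\overline{F})$, namely $E(F)$.

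The crux is part~(1): seeing that a directed $2$-path in $\overharpo{\Gamma}(G)$ forces, through Lemma~\ref{pqr}, a Hall subgroup of Fitting length $3$, which cannot live inside a metanilpotent group. Part~(2) is then a routine ``extension-by-Dirichlet'' construction of the type used in \cite{GRUBER2015397}; its only delicate point is satisfying the simultaneous congruences on the primes assigned to $X$, which bipartiteness of $\overline{F}$ renders harmless, since all primes for $Y$ can be fixed before any prime for $X$ is chosen.
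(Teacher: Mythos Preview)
Your proof is correct and follows essentially the same approach as the paper. For part~(1) the paper phrases the contradiction as ``$H_{pqr}/F(H_{pqr})$ is Frobenius yet nilpotent'' while you say ``$H_{pqr}$ has Fitting length $3$ hence is not metanilpotent,'' which are equivalent observations; for part~(2) both arguments carry out the same Dirichlet-based semidirect-product construction, with only cosmetic differences in which side of the bipartition is assigned primes first and in the exact congruence conditions imposed.
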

\begin{proof}
The proof involves two parts. We will first prove that the complement prime graph of a metanilpotent group is bipartite, and for each given bipartite graph, we will construct a group of square-free order that admits it as the complement prime graph.\bigskip

If $G$ is metanilpotent, then $G$ is solvable, so Hall subgroups exist. If there exists a directed $2$-path $r\to q\to p$ in the Frobenius Digraph, then the Hall $\{p,q,r\}$-subgroup $H$ is a $2$-Frobenius group by Lemma \ref{pqr}. Therefore, $H/F(H)$ is Frobenius. On the other hand, $H$ is metanilpotent because $G$ is metanilpotent. So $H/F(H)$ is nilpotent. This is a contradiction, since Frobenius groups cannot be nilpotent. Therefore, there does not exist a directed 2-path in the Frobenius Digraph. So $\overline{\Gamma}(G)$ is $2$-colorable by the Gallai-Hasse-Roy-Vitaver theorem.\bigskip

Conversely, assume that $\overline{F}$ is 2-colorable. We repeat the first part of the construction given in \cite[Theorem 2.8]{GRUBER2015397}. Take any 2-coloring of it using red and blue and direct the edges from red to blue. By construction, the resulting orientation contains no directed 2-paths. Now, we find a group $G$ of square-free order whose prime graph is isomorphic to $F$.\bigskip

Let $\mathcal{O}$ be the set of vertices in $\overline{F}$ with in-degree $0$ and non-zero out-degree and $\mathcal{I}$ be all the other vertices. Then all vertices in $\mathcal{I}$ have $0$ out-degree, because otherwise there will be a directed 2-path. Let $|\mathcal{O}|=m$ and $|\mathcal{I}|=n$.\bigskip

Let $\mathcal{P}=\{p_j\in \mathbb{P}: j=1,\dots,m\}$ be a set of distinct primes and $p$ be their product. By Dirichlet's theorem, we can pick $\mathcal{Q}=\{q_k\in \mathbb{P}:k=1,\dots,n\}$ a set of distinct primes such that $q_k\equiv1(mod\ p)$ for each $q_k$. Define a directed graph $\overline{\Lambda}$ with vertex set $\mathcal{P} \bigcup \mathcal{{Q}}$ and edge set defined by a fixed graph  isomorphism $\overline{F}\to \overline{\Lambda}$ mapping vertices in $\mathcal{O}$ and $\cI$ to primes in $\mathcal{P}$ and $\cQ$, respectively. Let $T=C_{p_1}\times \cdots \times C_{p_m}$ and $U=C_{q_1}\times \cdots \times C_{q_n}$. Since $p_j \mid q_k-1$ for each $j$ and $k$, we can define a semidirect product $K=U\rtimes T$ by letting $C_{p_j}$ act fixed-point-freely on $C_{q_k}$ if $p_j q_k\in \overline{\Lambda}$ and trivially otherwise. Then each Hall $\{p_i,q_j\}$-subgroup of $K$ is a Frobenius group if $p_j q_k\in \overline{\Lambda}$ and a direct product otherwise. So $F$ isomorphic to $\Lambda$, the prime graph of $K$, which is of square-free order.
\end{proof}

\begin{remark}
Let $\mathcal{C}$ be any one of the following classes of groups.
\begin{itemize}
    \item Metanilpotent groups
    \item Groups whose commutator subgroup is nilpotent
    \item Supersolvable groups
    \item Metacylic groups
    \item Metabelian groups
    \item Z-groups
    \item Groups of square-free order
\end{itemize}
Then a graph $F$ is isomorphic to the prime graph of some group in $\mathcal{C}$ if and only if $\overline{F}$ is bipartite. This is because any group of square-free order is in $\mathcal{C}$ and any group in $\mathcal{C}$ is metanilpotent.

We will see in Section $5$ that groups of cube-free and odd order are metanilpotent. By the above theorem, prime graphs of these groups have a bipartite complement. It turns out that every graph with a bipartite complement is isomorphic to  the prime graph of some group of cube-free and odd order. Having cube-free order is guaranteed by the above theorem, and it requires proof why we can always make the group order odd. Note that for any graph $F$ with bipartite complement, in our construction of a group $G$ of square-free order such that the prime graph of $G$ is isomorphic to $F$, we are free to choose the primes in $\pi(G)$ as long as certain number-theoretic conditions are satisfied. And we can always find primes fulfilling these conditions such that they avoid any given set of prime numbers. In particular, we can choose to avoid the prime number $2$ in $\pi(G)$. This will help us characterize prime graphs of groups of cube-free order in Section $5$.

\end{remark}

\section{Solvable groups of $n^{th}$-power-free order}
We first show the following lemma, which explains why the $p$-group at the bottom of a $2$-Frobenius group of type $(p,q,r)$ will have to be large.
\begin{lemmaN}\label{big order bottom}
If $H$ is a 2-Frobenius group of type $(p,q,r)$, then $H$ contains a subgroup of order $p^k$, where $r$ divides $k$.
\end{lemmaN}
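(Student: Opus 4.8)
The plan is to establish the sharper statement that $|F_1|=p^{n}$ with $r\mid n$, where $F_1=F(H)$ is the ``$p$-group at the bottom''; since a $p$-group of order $p^{n}$ has subgroups of every order $p^{k}$ with $k\le n$, taking $k=r$ (note $F_1\neq 1$, so $n\ge r$) then gives the lemma. I would start with the bookkeeping forced by the definition of type $(p,q,r)$: one has $\pi(H)\subseteq\{p,q,r\}$ with $p\neq q$ and $q\neq r$, the Sylow conditions make $F_1$ a $\{q,r\}'$-group and hence a $p$-group, $F_2=F_1Q$ for a Sylow $q$-subgroup $Q\cong F_2/F_1$, the complement $Q$ acts fixed-point-freely on $F_1$ (it is the complement in the Frobenius group $F_2$ with kernel $F_1$), and the upper kernel $H/F_2$ (an $r$-group) acts fixed-point-freely on $F_2/F_1$.

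Next I would take a chief series of the solvable group $H$ refined through $F_1$, giving chief factors $V_1,\dots,V_t\le F_1$, each elementary abelian $p$ of rank $d_i$ with $n=\sum_i d_i$; it then suffices to show $r\mid d_i$ for every $i$. Fix $V=V_i$. Since the normal $p$-subgroup $F_1$ acts on the nonzero $\mathbb{F}_p$-space $V$, the fixed space $C_V(F_1)$ is a nonzero $H$-submodule of the irreducible module $V$, so $F_1$ acts trivially and $\bar H:=H/C_H(V)$ is a quotient of $H/F_1$. Now $H/F_1$ is Frobenius with kernel $K:=F_2/F_1$ and complement $\cong H/F_2$; since $K$ acts fixed-point-freely on $F_1$ and hence on the subquotient $V$, no nontrivial element of $K$ centralizes $V$, and since a normal subgroup of the Frobenius group $H/F_1$ lies in or contains $K$, these two facts together force $C_H(V)=F_1$. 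Thus $V$ is a faithful irreducible $\mathbb{F}_p[\bar H]$-module with $\bar H$ Frobenius, kernel $K$, and complement $\bar R\cong H/F_2$, where $\bar R\neq 1$ because $r\mid|H|$ while $r\nmid|F_2|$.

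The crux is to show that such a configuration — a Frobenius group with complement $\bar R$ an $r$-group acting faithfully irreducibly on $V$ with kernel $K$ acting fixed-point-freely — forces $|\bar R|\mid\dim_{\mathbb{F}_p}V$. I would extend scalars to $\overline{\mathbb{F}_p}$ and apply Clifford's theorem relative to $K$ (legitimate since $p\nmid|K|$): any $\overline{\mathbb{F}_p}[\bar H]$-composition factor $U$ of $V\otimes\overline{\mathbb{F}_p}$ satisfies $U|_K=e_U\bigoplus_{\lambda\in O}\lambda$ for a single $\bar R$-orbit $O$ of irreducible $\overline{\mathbb{F}_p}[K]$-modules, and coprimality together with $C_{V\otimes\overline{\mathbb{F}_p}}(x)=0$ for $1\neq x\in K$ (which passes to composition factors since $|x|$ is prime to $p$) shows every $\lambda\in O$ is nontrivial. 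The main point is then that each such orbit has full size $|\bar R|$: since $\bar R$ acts fixed-point-freely on $K$, a short computation in the semidirect product $K\rtimes\bar R$ shows no nontrivial element of $\bar R$ fixes a nontrivial conjugacy class of $K$, so by Brauer's permutation lemma none fixes a nontrivial irreducible character of $K$ either; hence $\bar R$ acts freely on the nontrivial $\lambda$'s and $|O|=|\bar R|$. As $\dim_{\overline{\mathbb{F}_p}}\lambda$ is a power of $q$ and so prime to $r$, we get $r\mid|\bar R|\mid\dim_{\overline{\mathbb{F}_p}}U$ for every composition factor, hence $r\mid\dim_{\overline{\mathbb{F}_p}}(V\otimes\overline{\mathbb{F}_p})=d_i$, and summing over $i$ finishes the proof.

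I expect the first two paragraphs to be routine unwinding of definitions; the real obstacle is the orbit-size computation of the third paragraph, which is precisely where the fixed-point-free action of the upper kernel $H/F_2$ on $F_2/F_1$ is indispensable and where one must pass from a statement about group elements to one about characters. Two simplifications are worth noting. First, $q$ is necessarily odd: if $q=2$ then $K=F_2/F_1$, being a Frobenius complement and a $2$-group, is cyclic or generalized quaternion, and a short case check rules out a nontrivial fixed-point-free action on it by the odd $r$-group $H/F_2$; hence $K$ is cyclic and the orbit count reduces to the elementary observation that a fixed-point-free automorphism group of a cyclic group acts freely on its nontrivial characters (by duality of the $\alpha-1$ map), with no need for Brauer's lemma. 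Second, one can bypass Clifford theory entirely via the classical result that a Frobenius group acting on a module, over a field whose characteristic does not divide the kernel's order and with no kernel-fixed points, restricts to a free module over the complement's group algebra; this again yields $|\bar R|\mid\dim_{\mathbb{F}_p}V$ and transparently covers the degenerate case $p=r$ (type $(p,q,p)$, where $\overline{\mathbb{F}_p}[\bar H]$ is no longer semisimple but the argument only ever uses semisimplicity relative to the $p'$-group $K$).
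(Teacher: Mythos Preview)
Your approach is sound and the Clifford/Brauer orbit count in your third paragraph is correct, but there is one slip in the setup: the paper's definition of type $(p,q,r)$ only requires $F_1$ to \emph{contain} a Sylow $p$-subgroup, not to be one, and $F_1$ can genuinely carry an $r$-part (for instance $(C_7^2\times C_2^2)\rtimes S_3$, with $C_3$ acting fixed-point-freely on each factor, is $2$-Frobenius of type $(7,3,2)$ with $F_1=C_7^2\times C_2^2$). The fix is trivial---run your chief-series argument through $P_1:=O_p(H)$ instead of $F_1$---so this does not affect the substance of your argument.

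As for the comparison: the paper's proof is much shorter. Rather than passing through a full chief series and summing, it works with the single elementary abelian section $\Omega_1(P_1)$, observes that $H/F_1$ acts on it with $F_2/F_1$ fixed-point-free, and then outsources the entire representation-theoretic step to \cite[Lemma~0.34]{manz_wolf_1993} as a black box. That cited lemma is precisely the fact you establish by hand via Clifford theory and Brauer's permutation lemma (and your ``second simplification''---the classical free-module result---is essentially the same citation). So your route is more self-contained and yields the sharper conclusion $r\mid\log_p|P_1|$, while the paper's is quicker and names the witness subgroup $\Omega_1(P_1)$ directly. Incidentally, your detour through faithfulness ($C_H(V)=F_1$) is correct but unnecessary: the orbit-size argument only needs that the Frobenius group $H/F_1$ acts on $V$ with its kernel fixed-point-free, not that the action is faithful.
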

\begin{proof}
Let $F_1=\Fit(H)$ and $\Fit(H/F_1)=F_2/F_1$. $H/F_1$ acts on $F_1$ by conjugation. Take $P_1$ to be the unique Sylow $p$-subgroup of $F_1$. Then $H/F_1$ restricts to an action on $P_1$, since the taking a conjugation sends $p$-elements to $p$-elements. Since $\Omega_1(P_1)$ is characteristic in $P_1$, $\Omega_1(P_1)$ is invariant under the group action. So now we have that $H/F_1$ acts on $\Omega_1(P_1)$ elementary abelian. All nontrivial elements in $\Omega_1(P_1)$ has order $p$, so $\Omega_1(P_1)$ is elementary abelian and thus can be viewed as a vector space over $\mathbb{F}_p$. Then the group action makes $\Omega_1(P_1)$ an $\mathbb{F}_p(G/F_1)$-module. Also, since $F_2/F_1$ acts fixed-point-freely on $F_1$, it acts fixed-point-freely on $\Omega_1(P_1)$. Note that $F_2/F_1$ is a $q$-group, so $p\nmid |F_2/F_1|$. By \cite[Lemma 0.34]{manz_wolf_1993}, $\Omega_1(P_1)$ has dimension divisible by $r$ over $\mathbb{F}_p$.
\end{proof}
\begin{corollaryN}\label{small top}
If $|G|$ is $n^\text{th}$-power-free and $\pi\subset \pi(G)$ such that $\prod_{r\in\pi}r\geq n$, then any vertex in the Frobenius digraph cannot be the end of a $2$-path starting from each of the elements in $\pi$.  
\end{corollaryN}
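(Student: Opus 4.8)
The plan is a proof by contradiction in which the several $2$-paths ending at the supposed vertex are used to force a single Sylow $p$-subgroup to contain an elementary abelian section whose $\mathbb{F}_p$-dimension is divisible by $\prod_{r\in\pi}r$. Write $\pi=\{r_1,\dots,r_s\}$ and suppose, for contradiction, that $p$ is a vertex of $\overharpo{\Gamma}(G)$ which, for every $i$, is the end of a $2$-path $r_i\to q_i\to p$. Since $G$ is solvable, Hall subgroups exist, so for each $i$ we may take a Hall $\{p,q_i,r_i\}$-subgroup $H_i$; by Lemma \ref{pqr}, $H_i$ is $2$-Frobenius of type $(p,q_i,r_i)$.

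The crucial step is to observe that Lemma \ref{big order bottom} yields the \emph{same} group for every $i$. By the proof of that lemma applied to $H_i$, the asserted subgroup is $V_i:=\Omega_1(S_i)$, of order $p^{k_i}$ with $r_i\mid k_i$, where $S_i$ is the Sylow $p$-subgroup of $\Fit(H_i)$. By the definition of type $(p,q_i,r_i)$, $\Fit(H_i)$ contains a Sylow $p$-subgroup of $H_i$, which must then be $S_i$; being characteristic in the normal nilpotent group $\Fit(H_i)$, $S_i$ is normal in $H_i$. Hence $S_i$ is the unique Sylow $p$-subgroup of $H_i$, and as $H_i$ is a Hall $\{p,q_i,r_i\}$-subgroup of $G$ it is a Sylow $p$-subgroup of $G$. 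By Sylow's theorem the $S_i$ are pairwise $G$-conjugate, so the $V_i$ all have one and the same order $p^{k}$; in particular $r_i\mid k$ for every $i=1,\dots,s$.

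A counting step then finishes: the $r_i$ are pairwise distinct primes, hence pairwise coprime, so $\prod_{r\in\pi}r=\operatorname{lcm}(r_1,\dots,r_s)$ divides $k$, and since each $r_i\ge 2$ we have $k\ge 1$; thus $k\ge\prod_{r\in\pi}r\ge n$. But $V_1\le G$ has order $p^{k}$, so $p^{k}\mid|G|$, and the $n^{\text{th}}$-power-freeness of $|G|$ forces $k\le n-1$ — a contradiction. The part I expect to be the main obstacle is the second paragraph: one must resist applying Lemma \ref{big order bottom} as a black box to $s$ unrelated $2$-Frobenius groups (which only gives $s$ possibly different exponents $k_i$ with $r_i\mid k_i\le n-1$, and hence no contradiction), and instead exploit that the relevant $p$-group of a $2$-Frobenius group of type $(p,q,r)$ is the unique Sylow $p$-subgroup of its Fitting subgroup, so that all the $V_i$ coincide and the divisibilities $r_i\mid k_i$ amalgamate into $\bigl(\prod_{r\in\pi}r\bigr)\mid k$.
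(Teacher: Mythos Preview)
Your proof is correct and follows the same strategy as the paper: both fix an invariant of a Sylow $p$-subgroup $P$ of $G$ (you take $|\Omega_1(P)|=p^k$, the paper takes $|Z(P)|=p^k$) whose exponent must be divisible by every $r\in\pi$ via Lemma~\ref{big order bottom}, and then derive $k\ge\prod_{r\in\pi}r\ge n$, contradicting $n^{\text{th}}$-power-freeness. You are more explicit than the paper about the amalgamation step you flagged---namely, that each $S_i$ is in fact a Sylow $p$-subgroup of $G$, so the exponents $k_i$ coincide---and your choice of $\Omega_1(P)$ matches the proof of Lemma~\ref{big order bottom} more directly than the paper's $Z(P)$.
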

\begin{proof}
Suppose that $p$ is the end of a $2$-path starting from each $r\in \pi$. Let $|Z(P)|=p^k$. By Lemma \ref{big order bottom}, $r\mid k$ for each $r\in \pi$. Therefore, $\prod_{r\in\pi}r\mid k$. So $k\geq \prod_{r\in\pi}r \geq n $. This contradicts with $|P|$ being $n^\text{th}$-power-free.
\end{proof}

\begin{theoremN}\label{nth power free}
A graph $F$ is isomorphic to the prime graph of a solvable group of $n^\text{th}$-power-free order if and only if $\overline{F}$ satisfies the following conditions.
\begin{enumerate}
    \item $\overline{F}$ is triangle-free.
    \item There exists a $3$-coloring of $\overline{F}$ by Red, Green, and Blue and a way to label each red vertex by a distinct prime number such that for any subset $\pi\subset \pi(G)$ satisfying $\prod_{p\in \pi} p  \geq n$, we have that in the canonical orientation, no blue vertex is simultaneously the end of directed $2$-paths starting from each of the red vertices in $\pi$.
    
\end{enumerate}
\end{theoremN}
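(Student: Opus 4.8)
The plan is to prove both directions by combining the solvable prime-graph characterization (Theorem~\ref{PG of Solvable Groups}) with the order-restriction information from Corollary~\ref{small top}, and to build an explicit group for the converse by refining the construction in the backward direction of Theorem~\ref{PG of Solvable Groups}.

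For the forward direction, suppose $G$ is solvable of $n^\text{th}$-power-free order with prime graph $F$. By Theorem~\ref{PG of Solvable Groups}, $\overline{F}$ is triangle-free and $3$-colorable, giving condition~(1). For condition~(2), I would take the Frobenius digraph $\overharpo{\Gamma}(G)$, which is $3$-path-free; by the Gallai--Hasse--Roy--Vitaver argument it induces a $3$-coloring of $\overline{F}$ — concretely, color a vertex Red if it is not the end of any directed edge, Blue if it is not the start of any directed edge (but is the end of one), and Green otherwise. One checks this is a proper $3$-coloring whose canonical orientation is compatible with $\overharpo{\Gamma}(G)$ in the sense that every directed $2$-path of the canonical orientation of the form Red$\to$Green$\to$Blue is already a directed $2$-path in $\overharpo{\Gamma}(G)$. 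Label each red vertex by the prime it represents in $G$. Now if $\pi \subset \pi(G)$ has $\prod_{p\in\pi} p \geq n$ and some blue vertex $b$ were the end of directed $2$-paths from each red $r \in \pi$ in the canonical orientation, then each such $2$-path is a $2$-path in $\overharpo{\Gamma}(G)$, so by Corollary~\ref{small top} we get a contradiction with $|G|$ being $n^\text{th}$-power-free. The one technical point to nail down is that the canonical orientation's $2$-paths into a blue vertex genuinely refine to Frobenius-digraph $2$-paths; if the naive coloring above does not guarantee this, one instead reads the coloring directly off $\overharpo{\Gamma}(G)$ (red = in-degree $0$, blue = out-degree $0$, green = both positive), exactly as in the proof of Theorem~\ref{first char.}, and uses that every edge of $\overline{F}$ is oriented in $\overharpo{\Gamma}(G)$.

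For the converse, suppose $\overline{F}$ satisfies (1) and (2), with the given $3$-coloring and the assignment of distinct primes $p_v$ to the red vertices. I would mimic the construction from Theorem~\ref{PG of Solvable Groups}: build the group layer by layer according to the canonical orientation, using red primes at the top, then green, then blue, with each higher layer acting fixed-point-freely (via Dirichlet-chosen primes satisfying the needed congruences) on the layer below whenever the corresponding edge is present in $\overline{\Lambda}$, the isomorphic copy of $\overline{F}$ on the chosen primes. The subtlety is controlling the exponent of each prime. A blue vertex $b$ sits at the bottom of a $2$-Frobenius section of type $(p_b, q, r)$ precisely when there is a red-green-blue $2$-path ending at $b$, and the power of $p_b$ forced into the group is the product of the $r$'s over all red vertices $r$ admitting such a $2$-path into $b$ (one can arrange the green layer so that no extra multiplicities arise). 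Condition~(2) says exactly that this product is always $< n$, so every prime appears to a power less than $n$, i.e. $|G|$ is $n^\text{th}$-power-free. Green and red primes appear only to the first power. Hence the resulting solvable group has $n^\text{th}$-power-free order and prime graph $\Lambda \cong F$.

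The main obstacle I anticipate is the bookkeeping in the converse: making sure the construction realizes $\overline{F}$ exactly (no spurious edges, and all intended edges present as Frobenius or $2$-Frobenius sections rather than direct products) while simultaneously keeping the exponent of each blue prime equal to the product predicted by condition~(2) and not larger. This requires choosing the fixed-point-free modules of minimal dimension (invoking the dimension formula behind Lemma~\ref{big order bottom} in the sharp direction — that one can realize an irreducible fixed-point-free action of a cyclic $r$-group on $\mathbb{F}_{p}^{r}$) and checking that stacking these for different red vertices $r \in \pi$ over a common blue vertex multiplies, rather than adds, the dimensions. The forward direction's only real subtlety, as noted, is the compatibility between the canonical orientation of the chosen $3$-coloring and the Frobenius digraph, which is handled by choosing the coloring to be the one induced by $\overharpo{\Gamma}(G)$ itself.
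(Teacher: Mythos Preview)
Your proposal is correct and follows essentially the same approach as the paper: the forward direction uses the Frobenius digraph to induce the $3$-coloring (so that the canonical orientation's $2$-paths are genuine Frobenius-digraph $2$-paths) and then invokes Corollary~\ref{small top}, while the converse invokes the construction of \cite[Theorem~2.8]{GRUBER2015397} and reads off that a blue prime $p$ appears to exponent exactly $\prod_{r\in\pi} r$ where $\pi$ is the set of red vertices with a $2$-path into $p$. The paper handles your ``main obstacle'' simply by citing that construction and asserting the exponent count; your caution about the multiplicativity of dimensions is warranted but is already baked into the referenced construction, so there is no gap.
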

\begin{proof}
($\Rightarrow$) If $F$ is the prime graph of a solvable group, then its complement $\overline{F}$ is triangle-free and $3$-colorable \cite[Theorem 2.8]{GRUBER2015397}. \cite[Theorem 2.8]{GRUBER2015397} also showed that we can direct the edges in $\overline{F}$ to form the Frobenius Digraph, which is $3$-path-free. We can then color the vertices by red, green, and blue such that any $2$-path is from red to green to blue. For any $\pi\subset \pi(G)$ a collection of distinct primes whose product is at least $n$, any vertex in the Frobenius digraph cannot simultaneously be the end of a $2$-path starting from each of the elements in $\pi$ by Corollary \ref{small top}. The forward direction is proved.\bigskip

($\Leftarrow$) Direct the edges in $\overline{F}$ and label each vertex in $F$ by a distinct prime number such that the required conditions are satisfied. Follow the same construction as in \cite[Theorem 2.8]{GRUBER2015397}. For each vertex $q$ that is not the end of a directed 2-path, $q$ appears exactly once as a divisor of the group order. For any vertex $p$ that is the end of some directed $2$-path, $p$ appears exactly $\prod_{r\in\pi}r$ times in the group order, where $\pi$ is the set of vertices starting from which there is a directed $2$-path ending at $p$. By our condition, $\prod_{r\in\pi}r\leq n-1$. So the resulting group is of $n^\text{th}$-power-free order.
\end{proof}
We rephrase the graph theoretic conditions in the above theorem as the following.
\begin{corollaryN}\label{nth power free simplified}
A graph $F$ is isomorphic to the prime graph of a solvable group of $n^\text{th}$-power-free order if and only if $\overline{F}$ satisfies the following conditions.
\begin{enumerate}
    \item $\overline{F}$ is triangle-free.
    \item There exists a $3$-coloring of $\overline{F}$ by Red, Green, and Blue such that we can label the red vertices by the first $m$ primes $2=p_1<3=p_2<\cdots <p_m$ such that
\begin{enumerate}
    \item $p_m<n$.
    \item Let $\pi$ be any subset of $\{p_1,p_2,\cdots, p_m\}$ such that $\prod_{p\in \pi} p  \geq n$. Then in the canonical orientation, no blue vertex is simultaneously the end of a directed $2$-path starting from each of the red vertices in $\pi$, 
\end{enumerate}
\end{enumerate}
\end{corollaryN}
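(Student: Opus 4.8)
The plan is to show that Corollary~\ref{nth power free simplified} is equivalent to Theorem~\ref{nth power free}; since the latter is already proved, it suffices to reconcile the two sets of graph-theoretic conditions. The only difference between the two statements is that Theorem~\ref{nth power free} allows the red vertices to be labeled by \emph{any} distinct primes (with the product condition on subsets $\pi$), whereas the Corollary insists that the red vertices receive the first $m$ primes $2=p_1<3=p_2<\cdots<p_m$ with $p_m<n$. So the heart of the matter is a purely number-theoretic observation: among all ways of labeling a fixed set of $m$ ``red slots'' by distinct primes, assigning the $m$ smallest primes is optimal for satisfying condition (2).

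First I would fix a $3$-coloring of $\overline{F}$ by Red, Green, Blue with a set $R$ of $m$ red vertices, and set up the combinatorial structure that condition (2) actually depends on: for each blue vertex $b$, let $S_b\subset R$ be the set of red vertices $r$ such that, in the canonical orientation, there is a directed $2$-path from $r$ to $b$ (necessarily $r\to g\to b$ through some green vertex $g$). Condition (2) then says: for every blue vertex $b$ and every labeling $\lambda\colon R\to\mathbb P$ by distinct primes, if $\prod_{r\in S_b}\lambda(r)\ge n$ then\ldots\ wait---more precisely, condition (2) of Theorem~\ref{nth power free} is the statement that there \emph{exists} a labeling $\lambda$ of the red vertices such that for every $\pi\subset R$ with $\prod_{r\in\pi}\lambda(r)\ge n$, no blue vertex $b$ has $\pi\subseteq S_b$. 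Equivalently: there exists a labeling $\lambda$ such that for every blue vertex $b$, $\prod_{r\in S_b}\lambda(r)<n$ (since if that product is $<n$ then a fortiori every sub-product over $\pi\subseteq S_b$ is $<n$, and conversely taking $\pi=S_b$ forces $\prod_{r\in S_b}\lambda(r)<n$). So the clean reformulation of condition (2) is: \emph{there is a labeling $\lambda\colon R\to\mathbb P$ with $\prod_{r\in S_b}\lambda(r)<n$ for every blue $b$.}

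Next I would prove the optimality claim: if any labeling $\lambda$ satisfies the reformulated condition, then the labeling $\lambda_0$ assigning the $m$ smallest primes $p_1<\cdots<p_m$ to $R$ (in \emph{any} bijection) also does. This is immediate once we observe that for each $r$, $\lambda_0$ can be chosen so that $\lambda_0(r)\le\lambda(r)$ simultaneously for all $r$: order the elements of $R$ by their $\lambda$-values $\lambda(r_1)<\lambda(r_2)<\cdots<\lambda(r_m)$, and set $\lambda_0(r_i)=p_i$; since $\lambda(r_1),\ldots,\lambda(r_m)$ are $m$ distinct primes, $p_i\le\lambda(r_i)$ for each $i$. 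Then $\prod_{r\in S_b}\lambda_0(r)\le\prod_{r\in S_b}\lambda(r)<n$ for every blue $b$, so $\lambda_0$ works. Moreover $\lambda_0$ satisfies $p_m<n$ automatically: taking $S_b$ to be a singleton $\{r\}$ (which occurs whenever some red vertex has a $2$-path to a blue vertex) forces $\lambda_0(r)<n$; and if some red vertex lies on no directed $2$-path to any blue vertex, we may harmlessly relabel so that it gets the largest prime, but in any case if $p_m\ge n$ then $m$ is forced to be small and one checks directly --- actually the cleanest route is to note that condition (2)(b) of the Corollary with $\pi=\{p_m\}$ (when $p_m\in S_b$ for some $b$) already gives $p_m<n$, and when $p_m\notin S_b$ for all $b$ the prime $p_m$ is irrelevant to the condition and may be dropped from the red class (recoloring that vertex green, say) without affecting anything --- so WLOG $p_m<n$. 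Finally, the reverse implication (Corollary $\Rightarrow$ Theorem) is trivial, since a labeling by the first $m$ primes is in particular a labeling by distinct primes.

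The main obstacle I anticipate is not mathematical depth but careful bookkeeping of the quantifiers: one must be precise that condition (2) of Theorem~\ref{nth power free} is an \emph{existential} statement over labelings (``there exists a $3$-coloring\ldots and a way to label\ldots''), so that exhibiting the single good labeling $\lambda_0$ suffices, and one must handle the edge case of red vertices that do not begin any directed $2$-path into a blue vertex (these impose no constraint and should be pushed to carry the largest primes, or recolored). Once the reformulation ``$\prod_{r\in S_b}\lambda(r)<n$ for all blue $b$'' is in hand, the monotonicity argument $p_i\le\lambda(r_i)$ closes the proof in a line.
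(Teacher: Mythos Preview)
Your proposal is correct and follows essentially the same approach as the paper: both reduce to Theorem~\ref{nth power free}, argue that replacing an arbitrary labeling by the first $m$ primes can only make the product constraints easier to satisfy (your monotonicity $p_i\le\lambda(r_i)$ is exactly the paper's informal ``the smaller the primes we use, the easier the conditions are satisfied''), and both dispose of red vertices that start no $2$-path by recoloring. One small point to tighten: your suggestion to recolor an irrelevant red vertex ``green, say'' is not quite enough on its own, since such a vertex may have green neighbours; the paper's fix is to simultaneously recolor those green neighbours blue (which is safe precisely because, $r$ being irrelevant, none of them had blue neighbours), and you should include this step when you write out the edge case you already flagged.
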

\begin{proof}
Note that the second condition in this corollary is a refinement of the second condition stated in Theorem \ref{nth power free}. Therefore, it suffices to only prove the forward direction. Note that the smaller the primes we use to label the red vertices, the easier the above conditions are satisfied. Therefore, it suffices to only consider using the first $m$ primes, where $m$ is the number of red vertices. Furthermore, we can assume without loss of generality that there all red vertices are the start of some directed $2$-path. This is because otherwise, we can change the color of a red vertex $r$ that is not the start of a directed $2$-path into green, and change the color of each green vertex adjacent to $r$ to be blue. There will be no originally blue vertices connected to any of the above originally green vertices, because otherwise, we have a $2$-path starting from $a$. Therefore, we still have a $3$-coloring after the change. Since each red vertex is the start of some $2$-path, we cannot use a prime that is at least $n$ to label it by Theorem \ref{nth power free}. Therefore, the largest prime to be used must be less than $n$.
\end{proof}
\begin{remark}
The graph theoretic condition in the above corollary is a much easier condition to check than Theorem \ref{nth power free}. We can run the following algorithm on $\overline{F}$.
\begin{enumerate}
    \item Check that it is triangle-free.
    \item Find the biggest integer $m$ such that the $m^{\text{th}}$ prime is less than $n$.
    \item For each $k\leq m$, go through each way of picking $k$ vertices for the red color and check if the remaining graph is bipartite. If it is, further check if there exists a $2$-coloring of the rest of the vertices by Green and Blue and a way to label the $k$ red vertices by the first $k$ primes such that it satisfies the condition in Corollary \ref{nth power free simplified}.
\end{enumerate}
\end{remark}
The above theorem and corollary may seem complicated. However, we established an if-and-only-if condition between group-theoretic conditions on the group and graph-theoretic conditions on the its prime graph. The graph-theoretic conditions can be further simplified when $n$ is small, as we will demonstrate later in the case when $n\leq 5$. When $n=2$, it gives the same result as in Theorem \ref{first char.}. The case when $n=3$ is useful to studying general (not necessarily solvable) groups of cube-free order, as we will show in the next section. And the graph theoretic conditions are the same for $n=4$ and $n=5$.\\
Next we recover the result in Section 3 on groups of square-free order as a consequence of the previous result.
\begin{corollaryN}\label{2nd power free}
$F$ is isomorphic to the prime graph of a group of square-free order if and only if $\overline{F}$ is $2$-colorable.
\end{corollaryN}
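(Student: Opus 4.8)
The plan is to obtain this as the special case $n=2$ of Corollary~\ref{nth power free simplified}, using the (standard) fact, already recorded in the excerpt, that every group of square-free order is solvable, and that ``square-free'' is precisely ``$2^{\text{nd}}$-power-free.'' So I would start from the biconditional of Corollary~\ref{nth power free simplified} and simply watch both of its conditions degenerate when $n=2$.

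For the forward direction, suppose $F$ is the prime graph of a group of square-free order. Then $F$ is the prime graph of a solvable group of $2^{\text{nd}}$-power-free order, so by Corollary~\ref{nth power free simplified} there is a $3$-coloring of $\overline{F}$ by Red, Green, Blue whose red vertices admit a labeling by the first $m$ primes $2=p_1<3=p_2<\cdots<p_m$ with $p_m<n=2$. Since the least prime is $2$, the requirement $p_m<2$ cannot be met unless $m=0$; hence there are no red vertices, and the given $3$-coloring is in fact a proper $2$-coloring of $\overline{F}$ by Green and Blue. Thus $\overline{F}$ is $2$-colorable. For the converse, if $\overline{F}$ is $2$-colorable then it is in particular triangle-free, giving condition (1); and viewing a proper Green/Blue $2$-coloring as a $3$-coloring with no red vertices (so $m=0$), condition 2(a) holds vacuously, while condition 2(b) holds vacuously as well, since the only subset $\pi$ of the empty set of red primes is $\pi=\emptyset$, whose empty product equals $1<2=n$, so no $\pi$ needs to be tested. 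Hence all hypotheses of Corollary~\ref{nth power free simplified} are satisfied with $n=2$, and $F$ is the prime graph of a group of square-free order.

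There is essentially no obstacle here: the whole content is the bookkeeping observation that $p_1=2$ already fails $p_m<n=2$, which collapses the palette to two colors, and dually that the labeling conditions become vacuous once no red vertices remain; one should also note in passing that $2$-colorability implies triangle-freeness, so condition (1) is subsumed. (One could instead bypass Corollary~\ref{nth power free simplified} and cite Theorem~\ref{first char.} directly, since groups of square-free order are metanilpotent and every graph with bipartite complement was realized there by a square-free-order group; but the point of the present statement is precisely to exhibit it as a consequence of the $n^{\text{th}}$-power-free characterization, so I would present it that way.)
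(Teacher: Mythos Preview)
Your proof is correct and follows essentially the same approach as the paper: both specialize Corollary~\ref{nth power free simplified} to $n=2$, observe that the constraint $p_m<2$ forces $m=0$ (no red vertices) so that the $3$-coloring degenerates to a $2$-coloring, and note that conversely a bipartite $\overline{F}$ trivially satisfies the conditions. Your write-up is simply more explicit about the vacuity of conditions 2(a) and 2(b) and about why triangle-freeness is automatic.
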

\begin{proof}
A group of square-free order is always solvable. Apply Corollary \ref{nth power free simplified}. First, assume that $F$ is the prime graph of a group of square-free order. Since no prime is less than $2$, no vertex can be colored red. So the graph is $2$-colorable, or bipartite. Conversely, bipartite graphs satisfy the graph theoretic conditions in Corollary \ref{nth power free simplified}.
\end{proof}
The above corollary coincides with Theorem \ref{first char.}.
\begin{corollaryN}\label{solvable cube free}
$F$ is isomorphic to the prime graph of a solvable group of cube free order if and only if $\overline{F}$ is triangle free and is $2$-colorable after removing one vertex. 
\end{corollaryN}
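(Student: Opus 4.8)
The plan is to obtain this statement directly from Corollary~\ref{nth power free simplified} by specializing to $n=3$. Since we are dealing with \emph{solvable} groups of cube-free order, that corollary applies verbatim, so it suffices to show that its graph-theoretic conditions, for $n=3$, are equivalent to ``$\overline F$ is triangle-free and becomes $2$-colorable after deleting one vertex.''

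First I would unpack condition (2) of Corollary~\ref{nth power free simplified} for $n=3$. The only prime strictly less than $3$ is $2$, so the list of primes $2=p_1<\cdots<p_m$ labeling the red vertices has length at most $m=1$; that is, the sought $3$-coloring of $\overline F$ uses the color Red on at most one vertex (labeled by $2$ when present). Condition (2a), $p_m<n$, then reads $2<3$, which holds; and condition (2b) is vacuous, because every subset $\pi$ of $\{p_1,\dots,p_m\}\subseteq\{2\}$ satisfies $\prod_{p\in\pi}p\le 2<3$, so there is no $\pi$ to which it would apply. Hence, for $n=3$, Corollary~\ref{nth power free simplified} says: $F$ is the prime graph of a solvable group of cube-free order if and only if $\overline F$ is triangle-free and admits a $3$-coloring in which at most one vertex is Red.

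Next I would translate the coloring condition. A $3$-coloring of $\overline F$ with at most one red vertex amounts to a choice of at most one vertex $v$ together with a proper $2$-coloring of $\overline F - v$ by Green and Blue; equivalently, $\overline F - v$ is bipartite for some vertex $v$. (When $\overline F$ is already bipartite this holds for every $v$, so the two formulations agree up to the trivial empty/singleton cases.) This is exactly ``$\overline F$ is $2$-colorable after removing one vertex.'' Combined with the triangle-free condition, this yields the corollary.

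As for the main point of care: there is essentially no obstacle here beyond bookkeeping, since all the content sits in Corollary~\ref{nth power free simplified}. The two things worth double-checking are (i) that condition (2b) is genuinely vacuous for $n=3$, so no hidden constraint survives, and (ii) the harmless edge cases when matching ``$3$-coloring with at most one red vertex'' to ``$2$-colorable after removing one vertex'' (e.g.\ when $\overline F$ is already bipartite, or has at most one vertex). Both are immediate.
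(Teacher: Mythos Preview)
Your proposal is correct and takes essentially the same approach as the paper: both specialize Corollary~\ref{nth power free simplified} to $n=3$, observe that $2$ is the only prime below $3$ so at most one red vertex is allowed, and translate this into ``$2$-colorable after removing one vertex.'' You are simply more explicit than the paper in checking that condition (2b) is vacuous and in handling the edge cases.
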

\begin{proof}
Apply Corollary \ref{nth power free simplified}. First, assume that $F$ is the prime graph of a group of cube-free order. Since $2$ is the only prime less than $3$, at most one vertex can be colored red. So the graph is triangle free and is $2$-colorable with $1$ vertex removed. Conversely, it is straightforward to see that these graphs satisfy the graph theoretic conditions in Corollary \ref{nth power free simplified}.
\end{proof}

\begin{corollaryN}\label{fourth power free}
The following are equivalent.
\begin{enumerate}
    \item $F$ is isomorphic to the prime graph of a solvable group of fourth-power-free order.
    \item $F$ is isomorphic to the prime graph of a solvable group of fifth-power-free order.
    \item $\overline{F}$ is $3$-colorable, triangle-free, and satisfies one of the following conditions.
    \begin{enumerate}
        \item $2$-colorable after removing one vertex
        \item There exists a $3$-coloring of $\overline{F}$ by Red, Green, and Blue such that there are exactly $2$ red vertices. Furthermore, in the canonical orientation, no vertex is the end of a directed $2$-path starting from both of the red vertices.
    \end{enumerate}

\end{enumerate}
\end{corollaryN}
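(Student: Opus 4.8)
The plan is to derive the whole statement from Corollary \ref{nth power free simplified}, applied once with $n=4$ and once with $n=5$, together with the observation that the two resulting graph-theoretic conditions coincide; this will simultaneously give (1) $\Leftrightarrow$ (3), (2) $\Leftrightarrow$ (3), and hence (1) $\Leftrightarrow$ (2).

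First I would record the arithmetic that drives everything. The primes strictly less than $4$ are $2$ and $3$, and the primes strictly less than $5$ are again $2$ and $3$; hence in Corollary \ref{nth power free simplified} the number $m$ of red vertices satisfies $m\le 2$ in both cases, and when $m=2$ the two red vertices must be labelled $2$ and $3$. The subsets $\pi\subseteq\{2,3\}$ have products $1,2,3,6$, and the only one with $\prod_{p\in\pi}p\ge 4$ (equivalently $\ge 5$) is $\pi=\{2,3\}$. So for both $n=4$ and $n=5$, condition 2(b) of Corollary \ref{nth power free simplified} reduces to the single requirement that no blue vertex be the end of directed $2$-paths starting from both the vertex labelled $2$ and the vertex labelled $3$; and since in a canonical orientation every directed $2$-path terminates at a blue vertex, ``no blue vertex'' may be replaced by ``no vertex''. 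Already this shows the conditions for $n=4$ and $n=5$ are literally the same, giving (1) $\Leftrightarrow$ (2).

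Next I would split into the cases $m\le 1$ and $m=2$. If $m\le 1$, condition 2(b) is vacuous (any $\pi\subseteq\{2\}$ has product at most $2<4$), so Corollary \ref{nth power free simplified} asks exactly that $\overline F$ be triangle-free and admit a $3$-coloring with at most one red vertex; deleting that red vertex (or any vertex, if there is none) shows this is equivalent to $\overline F$ being triangle-free and $2$-colorable after removing one vertex --- condition 3(a), which in particular forces $\overline F$ to be $3$-colorable. If $m=2$, Corollary \ref{nth power free simplified} asks that $\overline F$ be triangle-free and admit a $3$-coloring with exactly two red vertices such that, in the canonical orientation, no vertex is the end of directed $2$-paths from both red vertices --- condition 3(b). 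Combining the two cases yields (1) $\Leftrightarrow$ (3), and the identical computation with $n=5$ yields (2) $\Leftrightarrow$ (3).

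All of the translations above are reversible: from 3(a) one recovers the data required by Corollary \ref{nth power free simplified} by colouring the deleted vertex red and labelling it $2$; from 3(b) one labels the two red vertices by $2$ and $3$ in either order, which is legitimate since both 3(b) and the one surviving constraint of Corollary \ref{nth power free simplified} (that for $\pi=\{2,3\}$) are symmetric in the two red vertices. I do not expect a real obstacle here --- the proof is essentially case-by-case book-keeping --- the only points needing care being the replacement of ``no blue vertex'' by ``no vertex'' (justified by the shape of the canonical orientation) and the folding of the cases $m=0$ and $m=1$ into the single clause 3(a).
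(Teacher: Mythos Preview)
Your proposal is correct and follows essentially the same approach as the paper: apply Corollary~\ref{nth power free simplified} for $n=4$ and $n=5$, observe that the primes below each threshold are $\{2,3\}$ in both cases, and that the only subset with product at least $n$ is $\{2,3\}$ itself. The paper compresses this into a single line, whereas you spell out the case split $m\le 1$ versus $m=2$ and the resulting identification with conditions 3(a) and 3(b); the underlying argument is the same.
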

\begin{proof}
Apply Corollary \ref{nth power free simplified}. Note that $2$ and $3$ are the only primes less than $4$ (or resp., $5$). Also, $2\times 3$ exceeds $4$ (or resp., $5$). 
\end{proof}

We now give an example of a graph that is triangle free and $3$-colorable but is not isomorphic to the complement of the prime graph of any solvable group of fifth-power-free order. This shows that by requiring our solvable group to have fifth-power-free order, we indeed end up with a more restrictive condition on its prime graph. It is also a nice demonstration of the algorithm in the remark under Corollary \ref{nth power free simplified}.

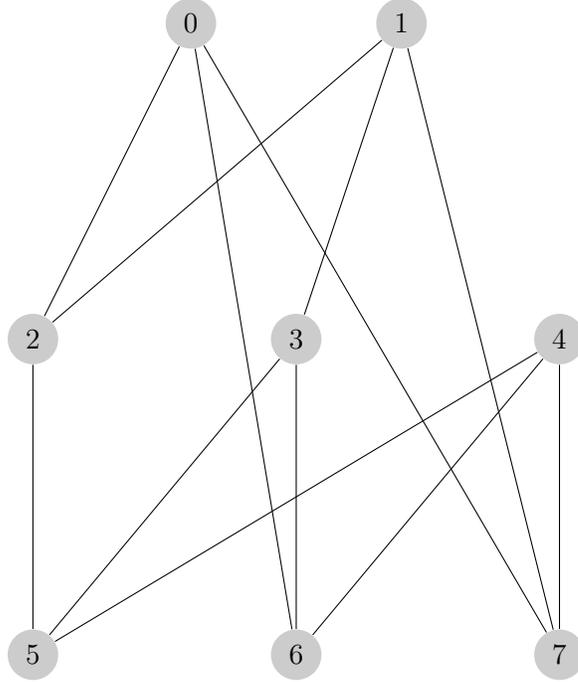
\begin{figure}
    \centering
\begin{tikzpicture}
  [scale=.7,auto=left,every node/.style={circle,fill=black!20}]
  \node (n0) at (3,12) {0};
  \node (n1) at (7,12)  {1};
  \node (n2) at (0,6)  {2};
  \node (n3) at (5,6) {3};
  \node (n4) at (10,6)  {4};
  \node (n5) at (0,0)  {5};
  \node (n6) at (5,0) {6};
  \node (n7) at (10,0) {7};

  \foreach \from/\to in {n0/n2, n0/n6, n0/n7, n1/n2, n1/n3, n1/n7, n2/n5, n3/n5, n3/n6, n4/n5, n4/n6, n4/n7}
    \draw (\from) -- (\to);
\end{tikzpicture}
\caption{A graph that is triangle free and $3$-colorable but is not isomorphic to the complement of the prime graph of any solvable group of fifth-power-free order.}
\end{figure}

\begin{theoremN}
Let $F$ be the graph in Figure 1. We have the following.
\begin{enumerate}
    \item $F$ is triangle free and $3$-colorable.
    \item $F$ is not isomorphic to the complement of the prime graph of any solvable group of fifth-power-free order.
\end{enumerate} 
\end{theoremN}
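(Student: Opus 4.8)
The first part is a routine check: I would exhibit an explicit 3-coloring of $F$ and verify no edge is monochromatic, and scan the twelve edges to confirm no triangle. Reading off Figure 1, one natural attempt is to try to two-color after deleting a vertex, but the real content is part (2), so I would not dwell here beyond confirming $\chi(F)\le 3$ and girth $\ge 4$.

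For part (2), the plan is to invoke Corollary \ref{fourth power free} (equivalently Corollary \ref{nth power free simplified} with $n=5$): $F$ is the complement prime graph of a solvable group of fifth-power-free order if and only if $\overline{F}$ (here, $F$ itself plays the role of $\overline{F}$, so I mean the graph in Figure 1) is 3-colorable, triangle-free, and either (a) 2-colorable after deleting one vertex, or (b) admits a 3-coloring with exactly two red vertices such that in the canonical orientation no vertex is the end of a directed 2-path starting from \emph{both} red vertices. So I must rule out both (a) and (b). To rule out (a): I would argue $F$ contains, after deleting \emph{any} single vertex, an odd cycle — i.e. every vertex of $F$ avoids some odd cycle's complement... more precisely, I would locate enough odd cycles (5-cycles, say) in $F$ with empty common intersection that no single vertex hits all of them, so $F-v$ is never bipartite. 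Concretely I would hunt for two or three 5-cycles in Figure 1 that are pairwise vertex-disjoint or at least have empty total intersection; since $F$ has $8$ vertices and a $5$-cycle uses $5$, two $5$-cycles sharing at most one vertex already force non-bipartiteness after any deletion.

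To rule out (b): suppose for contradiction we have such a 3-coloring with red set $\{r_1,r_2\}$. Then $F-\{r_1,r_2\}$ is bipartite (green/blue), which by the previous paragraph's circuit analysis already constrains $\{r_1,r_2\}$ to a short list of candidate pairs (every odd cycle must meet $\{r_1,r_2\}$). For each surviving candidate pair I would then examine the canonical orientation — edges red$\to$green, green$\to$blue, red$\to$blue — and produce a vertex $v$ that is a common out-neighbor-of-out-neighbor of both $r_1$ and $r_2$, i.e. a blue vertex with a green in-neighbor adjacent to $r_1$ and a (possibly different) green in-neighbor adjacent to $r_2$, or a blue vertex directly adjacent to both reds. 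The structure of Figure 1 — it looks like a fairly symmetric bipartite-ish graph on two rows of vertices $\{2,3,4\}$ and $\{5,6,7\}$ below $\{0,1\}$ — should make such a $v$ easy to spot for each candidate red pair, since the reds will be forced into the "$\{0,1\}$" or "$\{2,3,4\}$" region and their 2-paths will funnel into the bottom row.

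The main obstacle is the case analysis in ruling out (b): I need the non-bipartiteness argument from (a) to cut the candidate red pairs down to a genuinely small finite list (ideally one or two pairs up to the graph's automorphisms), and then for each I must either produce the forbidden double-ended 2-path in the canonical orientation or rule out the coloring by a parity/structure argument. Getting the odd-cycle census of Figure 1 right — enumerating its $5$-cycles and checking their intersection pattern — is the one step where a careless reading of the figure would break the proof, so I would do that enumeration explicitly and carefully before anything else.
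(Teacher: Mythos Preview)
Your plan is correct and is essentially the paper's own approach: reduce to Corollary~\ref{fourth power free} and rule out alternatives (a) and (b) by a case analysis on the choice of red set. The paper executes this by brute force rather than via an odd-cycle census or symmetry: it simply tabulates all $8$ single vertices and all $16$ non-adjacent pairs, exhibiting for each either an explicit $5$-cycle in the remainder (so it is not bipartite) or, when the remainder \emph{is} bipartite, a blue vertex at the end of directed $2$-paths from both reds in \emph{each} of the two possible green/blue colorings of the remainder. Eight of the sixteen pairs leave a bipartite remainder, so your hope of cutting down to ``one or two pairs up to automorphisms'' is optimistic --- be prepared for something close to a full enumeration, and remember to check both $2$-colorings for each surviving pair. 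One small slip: a blue vertex \emph{directly} adjacent to both reds is the end of $1$-paths (red~$\to$~blue), not $2$-paths, and so does not violate condition~(b); you need a green intermediary on each side.
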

\begin{proof}
$\{0,1\}\bigcup\{2,3,4\}\bigcup\{5,6,7\}$ is a $3$-coloring. By going through each set of $3$ vertices, we see that it is triangle free.

Next, we show that this graph does not satisfy the last condition in Corollary \ref{fourth power free}. We will go through each possible way of taking $1$ vertex or a pair of vertices $[a,b]$ that is not adjacent to each other to be the red vertices.
\begin{enumerate}
    \item We use $[a,b]: (c,d)$ to denote that if $a$ and $b$ are the red vertices, $c$ and $d$ are, respectively, the vertex that is the end of a directed 2-path from both $a$ and $b$ in the two ways of coloring the rest of the graphs by Blue and Green.
    \item We use $[a,b]: (uvwxy)$ (or resp., $[a]: (uvwxy)$) to denote that if $a$ and $b$ are (or resp., $a$ is) the red vertices (or resp., vertex), the remaining graph is not 2-colorable due to the existence of the 5 cycle $u\to v\to w \to x \to y\to u$.
\end{enumerate}
Then, we have the following.\bigskip

$[0,1]: (5,4)$\ \ \ \ \ \ \ \ \ $[0,3]: (21745)$\ \ \ \ \ \ $[0,4]: (3,1)$\ \ \ \ \ \ \ \ \ $[0,5]: (17463)$ 

$[1,4]: (02536)$\ \ \ \ \ \ $[1,5]: (6,0)$\ \ \ \ \ \ \ \ \ $[1,6]: (02547)$\ \ \ \ \ \ $[2,3]: (4,7)$

$[2,4]: (13607)$\ \ \ \ \ \ $[2,6]: (45317)$\ \ \ \ \ \ $[2,7]: (3,6)$\ \ \ \ \ \ \ \ \ $[3,4]: (0,2)$ 

$[3,7]: (02546)$\ \ \ \ \ \ $[5,6]: (1,7)$\ \ \ \ \ \ \ \ \ $[5,7]: (02136)$\ \ \ \ \ \ $[6,7]: (2,5)$ 
\bigskip

$[0]: (21745)$\ \ \ \ \ \ $[1]: (02546)$\ \ \ \ \ \ $[2]: (13607)$\ \ \ \ \ \ $[3]: (21745)$\ \ \ \ \ \ 

$[4]: (13607)$\ \ \ \ \ \ $[5]: (13607)$\ \ \ \ \ \ $[6]: (21745)$\ \ \ \ \ \ $[7]: (02136)$\bigskip

Therefore, $F$ is not the prime graph of any solvable group of fifth-power-free order.
\end{proof}

\section{Groups of Cube-free order}
Groups of square-free order are automatically solvable, so their prime graphs can be characterized by directly applying Corollary \ref{nth power free simplified}, as we did in Corollary \ref{2nd power free}. In this section, we apply results from the previous two sections to characterize prime graphs of groups of cube-free order. Note that the solvable case has been taken care of by Corollary \ref{solvable cube free}. To obtain a characterization of non-solvable groups of cube-free order, we notice that these groups are of a particular form, namely they must be a direct product of some suitable $\PSL(2,p)$ with a group of odd and cube-free order. Therefore, we first consider the prime graphs of groups of odd and cube-free order.

\begin{lemmaN}\label{odd cube free}
$F$ is isomorphic to the prime graph of a group of odd and cube-free order if and only if its complement is bipartite.
\end{lemmaN}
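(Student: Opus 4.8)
The plan is to reduce this to the already-established characterization of prime graphs of solvable groups of cube-free order (Corollary \ref{solvable cube free}) together with the metanilpotency remark following Theorem \ref{first char.}. The key group-theoretic input I would use is the classical fact (due to the structure theory of groups of cube-free order, e.g. work of Dietrich--Eick or by a Hall/Fitting argument) that a group $G$ of odd and cube-free order is metanilpotent; in fact one shows $G$ has a normal Hall subgroup structure forcing $F_2(G) = G$. Granting this, the forward direction is immediate: if $F$ is the prime graph of such a $G$, then $G$ is metanilpotent, so by part (1) of Theorem \ref{first char.} its complement $\overline{F}$ is bipartite.

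For the converse, suppose $\overline{F}$ is bipartite. By part (2) of Theorem \ref{first char.}, there is a group $K$ of square-free order whose prime graph is isomorphic to $F$. The only thing to check is that the construction can be carried out avoiding the prime $2$, so that $|K|$ is odd (it is automatically cube-free, indeed square-free). This is exactly the freedom noted in the remark after Theorem \ref{first char.}: in that construction the primes $p_j$ assigned to the vertices of $\mathcal{O}$ can be chosen to be any distinct primes, and the primes $q_k$ assigned to $\mathcal{I}$ are obtained from Dirichlet's theorem as primes congruent to $1 \pmod p$; all of these can be taken odd (Dirichlet's theorem produces infinitely many primes in the relevant arithmetic progression, so we may discard any we wish, in particular $2$). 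Hence $K$ has odd square-free order, a fortiori odd and cube-free order, and its prime graph is $F$.

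I do not expect the converse direction to present any real difficulty — it is essentially a bookkeeping check that the earlier construction respects the odd-order constraint. The main obstacle, such as it is, lies in the forward direction: one must cite or prove that odd cube-free order forces metanilpotency. I would handle this by observing that in a group $G$ of cube-free order, Sylow subgroups are abelian of rank at most $2$; for odd order one can invoke the Feit--Thompson theorem for solvability and then a short argument (or a direct citation to the literature on cube-free groups) that the Fitting height is at most $2$. Once metanilpotency is in hand, the rest follows formally from Theorem \ref{first char.}. It is worth noting that the oddness hypothesis is genuinely needed here only to keep the statement clean and to feed into the non-solvable cube-free classification in the next results; cube-free order alone (allowing $2$) need not give metanilpotency, which is precisely why the non-solvable case is treated separately.
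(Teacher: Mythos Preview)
Your proposal is correct and follows essentially the same approach as the paper: both directions proceed exactly as you describe, with the converse handled by rerunning the square-free construction of Theorem \ref{first char.} on odd primes only. The only difference is cosmetic---the paper cites \cite[Theorem 1.1]{qiao2011finite} to get that a group of odd cube-free order is in fact \emph{metabelian} (stronger than the metanilpotency you invoke, but either suffices for Theorem \ref{first char.}(1)), so you may simply replace your Dietrich--Eick/Fitting-height sketch with that citation.
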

\begin{proof}
By \cite[Theorem 1.1]{qiao2011finite}, a group of odd and cube-free order is metabelian. By Theorem \ref{first char.}, the prime graphs of such groups have a bipartite complement. \bigskip

Conversely, if a graph has bipartite complement, we can follow the same construction in Theorem \ref{first char.} while only using the odd primes, to obtain a group of square-free and odd order, which certainly is of cube-free and odd order.
\end{proof}

If $|G|$ is cube free order and $|G|$ not solvable, then $G=\PSL(2,p)\times M$, where $M$ is of odd order and $p$ is some suitable prime number.

\begin{lemmaN}\label{non-sol cube free}
$F$ is the prime graph of some non-solvable group of cube-free order if and only if $\overline{F}$ consists of two disconnected parts. One of them is bipartite and the other is obtained by taking some $\overline{\Gamma}(\PSL(2,p))$, $p\geq 5$ prime number, of cube-free order and deleting all edges in $\overline{\Gamma}(\PSL(2,p))$ connected to a subset $\cS$ of $$\left \{s\text{ odd prime number}: s\mid \dfrac{q(q+1)(q-1)}{2}, s^2\nmid\dfrac{q(q+1)(q-1)}{2} \right\}$$.
\end{lemmaN}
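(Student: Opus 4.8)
\emph{Strategy.} The proof rests on the structural fact recorded just before the lemma: a non-solvable group $G$ of cube-free order is a direct product $G=\PSL(2,p)\times M$ with $p\geq 5$ prime and $M$ of odd order; then $|\PSL(2,p)|=p(p-1)(p+1)/2$ is automatically cube-free, being a divisor of $|G|$. Given this, the whole argument is just a translation between the product decomposition and the complement prime graph, and the only tool needed is the elementary observation that for any direct product $A\times B$ and distinct primes $r,s$, the edge $r\text{-}s$ lies in $\Gamma(A\times B)$ if and only if $r\text{-}s$ already lies in $\Gamma(A)$ or in $\Gamma(B)$, or $r\mid|A|$ and $s\mid|B|$, or $s\mid|A|$ and $r\mid|B|$ (take the lcm of element orders of the two factors).

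\emph{Forward direction.} Suppose $G=\PSL(2,p)\times M$ as above, and put $\cS=\pi(\PSL(2,p))\cap\pi(M)$. Every $s\in\cS$ is odd (it divides $|M|$, which is odd) and satisfies $s^{2}\nmid|\PSL(2,p)|$ (since $s$ divides $|M|$ too and the $s$-exponents of the two factors add up to at most $2$ by cube-freeness), so $\cS$ is contained in the set displayed in the statement. Now split $\pi(G)=\pi(\PSL(2,p))\sqcup(\pi(M)\setminus\pi(\PSL(2,p)))$ and read off $\overline{\Gamma}(G)$ block by block with the tool above. First, any prime in one block is joined in $\Gamma(G)$ to any prime in the other, so $\overline{\Gamma}(G)$ is the disjoint union of its restrictions to the two blocks; the restriction to $\pi(M)\setminus\pi(\PSL(2,p))$ coincides with the restriction of $\overline{\Gamma}(M)$ there, hence is bipartite because $\overline{\Gamma}(M)$ is bipartite by Lemma \ref{odd cube free}. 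Second, on $\pi(\PSL(2,p))$: if $s\in\cS$ then $s$ is joined in $\Gamma(G)$ to every other prime of $|G|$ (for $t\mid|\PSL(2,p)|$ multiply an order-$s$ element of $M$ by an order-$t$ element of $\PSL(2,p)$; for $t\in\pi(M)\setminus\pi(\PSL(2,p))$ swap the roles), so $s$ is isolated in $\overline{\Gamma}(G)$; while for $s,t\notin\cS$ both are coprime to $|M|$, whence $s\text{-}t\in\overline{\Gamma}(G)$ iff $s\text{-}t\in\overline{\Gamma}(\PSL(2,p))$. Thus the restriction of $\overline{\Gamma}(G)$ to $\pi(\PSL(2,p))$ is exactly $\overline{\Gamma}(\PSL(2,p))$ with all edges meeting $\cS$ deleted, and combined with the disjointness this gives the asserted form of $\overline{F}$ (the bipartite part being allowed to be empty, e.g. when $\pi(M)\subseteq\pi(\PSL(2,p))$).

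\emph{Backward direction.} Conversely, let $\overline{F}=A\sqcup B$ with $B$ bipartite and $A$ obtained from $\overline{\Gamma}(\PSL(2,p))$, for some prime $p\geq5$ with $|\PSL(2,p)|$ cube-free, by deleting all edges meeting a set $\cS$ of odd primes with $s\mid|\PSL(2,p)|$ and $s^{2}\nmid|\PSL(2,p)|$ for each $s\in\cS$; here the vertices of $A$ are literally the primes dividing $|\PSL(2,p)|$. By Theorem \ref{first char.}(2) and the remark following it (the construction may use only odd primes and may avoid any prescribed finite set of primes), pick a square-free odd-order group $M_{B}$ with $\overline{\Gamma}(M_{B})\cong B$ and $\pi(M_{B})\cap\pi(\PSL(2,p))=\emptyset$. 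Set $d=\prod_{s\in\cS}s$, $M=M_{B}\times C_{d}$, and $G=\PSL(2,p)\times M$. Then $G$ is non-solvable; $|G|=|\PSL(2,p)|\,|M_{B}|\,d$ is cube-free, since primes of $\pi(M_{B})$ occur to the first power, each prime of $\cS$ acquires exponent $2$, and every exponent already present in $|\PSL(2,p)|$ is at most $2$; and since $\pi(M)\cap\pi(\PSL(2,p))=\cS$ and $\pi(M)\setminus\pi(\PSL(2,p))=\pi(M_{B})$, repeating the block computation of the forward direction with this concrete $M$ (noting that $\overline{\Gamma}(M)$ restricted to $\pi(M_{B})$ equals $\overline{\Gamma}(M_{B})$ because $C_{d}$ has order coprime to $\pi(M_{B})$) yields $\overline{\Gamma}(G)=A\sqcup B=\overline{F}$, i.e. $\Gamma(G)\cong F$.

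\emph{Main obstacle.} There is no conceptual obstacle here; the entire proof is careful bookkeeping about the prime graph of a direct product. The one step that genuinely needs care is the analysis of the $\PSL(2,p)$-block: a prime $s$ shared by $\PSL(2,p)$ and $M$ becomes isolated in $\overline{\Gamma}(G)$, and running this backwards, forcing precisely the primes of $\cS$ to be shared — which is compatible with cube-freeness exactly when $s^{2}\nmid|\PSL(2,p)|$ — realizes the deletion of all edges at $\cS$; this is what singles out the set $\cS$ appearing in the statement. A secondary technical point is that in the backward direction the vertex set of $A$ is the fixed set of primes dividing $|\PSL(2,p)|$, so $B$ must be realized using primes disjoint from it, which is exactly the freedom granted by the construction in Theorem \ref{first char.}.
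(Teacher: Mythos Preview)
Your proof is correct and follows essentially the same approach as the paper: both invoke the Qiao--Li structure theorem $G=\PSL(2,p)\times M$ with $M$ of odd order, analyze $\overline{\Gamma}(G)$ by block-decomposing $\pi(G)$ along $\pi(\PSL(2,p))$ and $\pi(M)$, and realize the converse by adjoining $C_d=\prod_{s\in\cS}\Z_s$ to a square-free odd-order group built via Theorem~\ref{first char.} on primes disjoint from $\pi(\PSL(2,p))$. Your write-up is in fact somewhat more explicit than the paper's about the direct-product bookkeeping (stating the edge rule for $\Gamma(A\times B)$ upfront) and about edge cases such as the bipartite block being empty, but the underlying argument is the same.
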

\begin{proof}
By \cite[Theorem 1.1]{qiao2011finite}, if $G$ is non-solvable of cube-free order, $G=\PSL(2,p)\times M$, where $|M|$ odd. Since $M\leq G$, $M$ is of cube-free order as well. By Lemma \ref{odd cube free}, the complement prime graph of $M$ is bipartite. Let $\cQ=\pi(\PSL(2,p))$ and $\cR=\pi(M)$.\bigskip

Note that if $a\in \PSL(2,p)$ and $b\in M$ with coprime order, since $a$ and $b$ commute, the order of $ab$ is the product of the order of $a$ and the order of $b$. \bigskip

If $\cQ\bigcap \cR=\emptyset$, $\overline{\Gamma}(G)$ has vertex set $\cQ\bigsqcup \cR$, where there is never an edge between $q\in \cQ$ and $r\in \cR$ by the above paragraph and Cauchy's theorem. So $\overline{\Gamma}(G)$ has two disconnected induced subgraphs of vertex set $\cQ$ and $\cR$., where $\Gamma(G)[\cQ]$ is the prime graph of $\PSL(2,p)$ and $\Gamma(G)[\cR]$ is the prime graph of $M$. This satisfies the statement where we take $S$ to be empty.\bigskip

If $\cQ\bigcap \cR=\cS\not=\emptyset$, any prime $s \in \cS$ divides both in $|\PSL(2,p)|$ and $|M|$. Since $G$ is of cube free order, $s^2$ cannot divide $|\PSL(2,p)|$. So $\cS$ is indeed a subset of $$\left\{s\text{ odd prime number}: s\mid \dfrac{q(q+1)(q-1)}{2}, s^2\nmid\dfrac{q(q+1)(q-1)}{2}\right\}$$ Let $\cQ=\cQ'\bigsqcup \cS$ and $\cR=\cR'\bigsqcup \cS$. Then the prime graph of $G$ has vertex set $\cP'\bigsqcup \cQ'\bigsqcup \cR$. Since elements in $\PSL(2,p)$ and $M$ commute with each other, all vertices in $\cS$ are not adjacent to any other vertex in $\overline{\Gamma}(G)$. Also, any vertex in $\cQ'$ is not adjacent to any vertex in $\cR'$ in $\overline{\Gamma}(G)$. Now we have that the subgraph $\Gamma[Q'\bigsqcup S]$ is obtained by adding an edge between any point in $Q'$ and any point in $S$ to the prime graph of $\PSL(2,p)$. And the subgraph $\Gamma[R']$ is the prime graph of the Hall $R'$-subgroup of $M$, whose order is odd and cube free. (M solvable, so Hall-subgroups always exist.) \bigskip

Conversely, let $Q=Q'\bigsqcup S$, where $Q$ is the set of prime divisors of $\PSL(2,p)$, and $S$ is the subset of it such that all edges connected to points in $S$ in the complement graph is removed. Let $R$ be the remaining vertices.\bigskip

Note that the subgraph $R$ has bipartite complement, by our previous construction, there exists $M'$, a group of square free order whose prime graph is our graph restricted to $R$. Note that in our previous construction, we have infinite suitable primes to choose from, so we can always avoid divisors of $\PSL(2,p)$\bigskip

Let $M$ be the direct sum of $M'$ and $\Z_s$ for each $s\in S$. Then $\PSL(2,p)\times M$ has the desired prime graph. Also, $\PSL(2,p)\times M$ still has cube free order, because the prime divisors of $M$ either do not appear in $|\PSL(2,p)|$, or appear in both $M$ and $\PSL(2,p)$ exactly once. 
\end{proof}
Since odd order groups are always solvable, the class of all groups of cube-free order can be divided into groups of cube-free and odd order, solvable groups of cube-free and even order, and non-solvable groups of cube-free order.
\begin{theoremN}\label{cube free order}
Below, we give a stronger result than the characterization of prime graphs of groups of cube free order.
\begin{enumerate}
    \item $F$ is isomorphic to the prime graph of some group of cube-free and odd order if and only if $\overline{F}$ is bipartite.\\
    \item $F$ is isomorphic to the prime graph of some solvable group of cube-free and even order if and only if $\overline{F}$ is triangle free and is $2$-colorable after removing one vertex.\\
    \item $F$ is isomorphic to the prime graph of some non-solvable group of cube-free order if and only if $\overline{F}$ consists of two disconnected parts. One of them is bipartite and the other is obtained by taking the complement of a prime graph of some $\PSL(2,p)$ of cube-free order and deleting all edges connected to a subset $S$ of $$\left \{s\text{ odd prime number}: s\mid \dfrac{q(q+1)(q-1)}{2}, s^2\nmid\dfrac{q(q+1)(q-1)}{2} \right\}$$.
\end{enumerate}

\end{theoremN}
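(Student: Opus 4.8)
The three statements are a repackaging of results already in hand, once the class of cube-free groups is partitioned correctly. By the Feit--Thompson theorem a group of odd order is solvable and a non-solvable group has even order, so every group of cube-free order lies in exactly one of: cube-free odd order; solvable cube-free even order; non-solvable cube-free order. Statement (1) is exactly Lemma \ref{odd cube free}. Statement (3) is exactly Lemma \ref{non-sol cube free}, since $\overline{\Gamma}(\PSL(2,p))$ is by definition the complement of the prime graph of $\PSL(2,p)$. Thus only statement (2) needs separate treatment, and there only the ``even order'' bookkeeping: the graph-theoretic condition in (2) is precisely the one characterizing solvable cube-free groups in Corollary \ref{solvable cube free}.

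For (2), the forward direction is immediate: a solvable group of cube-free and even order is in particular solvable of cube-free order, so Corollary \ref{solvable cube free} gives that its complement prime graph is triangle-free and $2$-colorable after deleting one vertex. For the backward direction, given $\overline{F}$ triangle-free and $2$-colorable after removing one vertex, I would split into cases. If $\overline{F}$ is not bipartite, the construction behind Corollary \ref{solvable cube free} (namely that of Corollary \ref{nth power free simplified}) uses a single red vertex and labels it with the smallest prime $2$, so the resulting solvable cube-free group already has even order. If $\overline{F}$ is bipartite, I would forcibly designate an arbitrary vertex $v$ as the unique red vertex: removing $v$ from a bipartite graph leaves a bipartite graph, so coloring $v$ red and $2$-coloring $\overline{F}\setminus v$ green/blue is a proper $3$-coloring whose canonical orientation is $3$-path-free, and the numerical condition of Corollary \ref{nth power free simplified} holds vacuously for $n=3$ (one cannot have $\pi\subseteq\{2\}$ with $\prod_{p\in\pi}p\geq 3$). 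Running that construction labels $v$ by $2$ and yields a solvable group of cube-free even order with prime graph $F$; in the degenerate sub-case where $\overline{F}$ is edgeless ($F$ complete), simply take a cyclic group of even square-free order on $|\pi(F)|$ primes.

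The genuinely deep inputs --- the structural classification of cube-free groups as $\PSL(2,p)\times(\text{odd cube-free})$ and the metabelianness of odd cube-free groups, both from \cite{qiao2011finite}, together with the list of $\PSL(2,p)$ of cube-free order --- are already absorbed into Lemmas \ref{odd cube free} and \ref{non-sol cube free}, so no new heavy machinery is needed here. The only points requiring genuine care are (a) verifying the three cases are exhaustive and pairwise disjoint (Feit--Thompson), and (b) the even-order construction in the bipartite sub-case of (2), where one must check that promoting a vertex to red neither creates a directed $3$-path in the canonical orientation nor violates the numerical condition of Corollary \ref{nth power free simplified}. I expect (b) to be the main, if modest, obstacle.
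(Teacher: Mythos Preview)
Your proposal is correct and follows the paper's approach: the paper's own proof is the single line ``By Lemma \ref{odd cube free}, Lemma \ref{non-sol cube free}, and Lemma \ref{solvable cube free},'' citing exactly the three results you invoke. You are in fact more careful than the paper on one point: the paper does not explicitly address why the backward direction of (2) produces a group of \emph{even} order, whereas you correctly note that when $\overline{F}$ is not bipartite the single red vertex in the construction of Corollary~\ref{nth power free simplified} is necessarily labeled $2$, and you supply the small additional argument (promoting an arbitrary vertex to red) in the bipartite case; this extra care is legitimate and the paper simply leaves it implicit.
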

\begin{proof}
By Lemma \ref{odd cube free}, Lemma \ref{non-sol cube free}, and Lemma \ref{solvable cube free}.
\end{proof}

\begin{lemmaN}
$F$ is isomorphic to the prime graph of some $\PSL(2,q)$ of cube-free order, where $q\geq 5$ prime, if and only if $F$ has three connected components, which are an isolated point, an $m$-clique, and an $n$-clique, where $m$ and $n$ satisfy the following number theory condition:

There exists a prime number $q\geq 5$ such that
\begin{enumerate}
    \item $q\equiv 3,5 \mod  8$
    \item Both $q+1$ and $q-1$ are cube-free. 
    \item One of $\dfrac{q+1}{2}$ and $\dfrac{q-1}{2}$ has $m$ distinct prime divisors, and the other one has $n$ distinct prime divisors.
\end{enumerate} 
\end{lemmaN}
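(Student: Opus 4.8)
The plan is to deduce everything from two facts about $\PSL(2,q)$ with $q\ge5$ prime: the exact isomorphism type of its prime graph, and the exact condition on $q$ under which $|\PSL(2,q)|$ is cube-free. With these in hand, the ``if and only if'' becomes a direct translation.

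\emph{The prime graph.} For $q$ prime one has $|\PSL(2,q)|=\tfrac{q(q-1)(q+1)}{2}$ with Sylow $q$-subgroup of order $q$, and by Dickson's description of the subgroups of $\PSL(2,q)$ (equivalently, by the data for $\PSL(2,q)$ in \cite{WILLIAMS1981487,kondratev1989prime}) every nontrivial element is either unipotent of order $q$ or semisimple and contained in a cyclic maximal torus of order $\tfrac{q-1}{2}$ or $\tfrac{q+1}{2}$. From this I would extract: (i) the centralizer of a nontrivial unipotent element is the Sylow $q$-subgroup, so no element has order divisible by $q$ other than $q$ itself, i.e. $q$ is an isolated vertex; (ii) since $q-1$ and $q+1$ have greatest common divisor $2$, the integers $\tfrac{q-1}{2}$ and $\tfrac{q+1}{2}$ are coprime, exactly one is even, and their prime divisors together with $q$ partition $\pi(\PSL(2,q))$; (iii) for distinct primes $s,t\ne q$ there is an element of order $st$ exactly when $s$ and $t$ both divide the same one of $\tfrac{q-1}{2}$, $\tfrac{q+1}{2}$ — the forward implication because such an element is semisimple, hence lies in one of the cyclic tori, and the reverse because that cyclic torus then contains an element of order $st$. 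Combining these, $\Gamma(\PSL(2,q))$ is the disjoint union of the isolated vertex $q$, the clique on the prime divisors of $\tfrac{q-1}{2}$, and the clique on the prime divisors of $\tfrac{q+1}{2}$; as $q\ge5$, both $\tfrac{q-1}{2}\ge2$ and $\tfrac{q+1}{2}\ge3$ exceed $1$, so this genuinely is a three-component graph of the stated shape, the two clique sizes being $|\pi(\tfrac{q-1}{2})|$ and $|\pi(\tfrac{q+1}{2})|$.

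\emph{When the order is cube-free.} Here I would check that (1) and (2) together are equivalent to $\tfrac{q(q-1)(q+1)}{2}$ being cube-free. The factor $q$ contributes $q^1$; an odd prime $s\ne q$ divides at most one of $q-1,q+1$, so its exponent in $|\PSL(2,q)|$ equals its exponent in that factor, which is exactly the statement ``$q-1$ and $q+1$ cube-free'' for the odd part. For the prime $2$ one has $v_2(|\PSL(2,q)|)=v_2(q-1)+v_2(q+1)-1$, and this is $\le2$ precisely when $q\equiv3,5\pmod8$; moreover that congruence is in any case forced by (2), since $q\equiv\pm1\pmod8$ would put a factor $8$ into $q\mp1$.

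\emph{Assembling the equivalence.} If $F\cong\Gamma(\PSL(2,q))$ with $|\PSL(2,q)|$ cube-free, the first part gives the isolated-vertex-plus-two-cliques shape, the second gives (1)--(2), and the clique sizes give (3). Conversely, given $F$ of that shape and a prime $q\ge5$ satisfying (1)--(3): (1)--(2) make $|\PSL(2,q)|$ cube-free, the first part identifies $\Gamma(\PSL(2,q))$ as an isolated vertex together with cliques of sizes $|\pi(\tfrac{q-1}{2})|,|\pi(\tfrac{q+1}{2})|$, and (3) says these are $m,n$ in some order, so sending isolated vertex to isolated vertex and matching equal-size cliques yields $\Gamma(\PSL(2,q))\cong F$. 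The only genuinely delicate step is (iii) — excluding all ``cross'' edges between the two torus-cliques and confirming that $q$ is isolated — which is exactly where the structure of $\PSL(2,q)$ is used; the rest is bookkeeping, modulo remembering that a $1$-clique is just an isolated point (as for $\PSL(2,5)=A_5$, where $m=n=1$ and $F$ is three isolated vertices).
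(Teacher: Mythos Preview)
Your argument is correct and follows the same route as the paper: both invoke Dickson's description of the element orders (equivalently, the subgroup structure) of $\PSL(2,q)$ to identify the prime graph as an isolated vertex together with the two torus-cliques. Your write-up is in fact more complete than the paper's own proof, which states the prime-graph shape but does not spell out why conditions (1) and (2) are exactly the cube-freeness of $|\PSL(2,q)|$; your $2$-adic computation $v_2(|\PSL(2,q)|)=v_2(q-1)+v_2(q+1)-1$ and the observation that (2) already forces (1) fill that gap cleanly.
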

\begin{proof}
By a result of Dickson (see \cite[II, Hauptsatz 8.27]{huppert2013endliche}) we know that the set of element orders of $\PSL(2,q)$, where $q\geq 5$ prime, is the set of prime divisors of $q$, $\frac{q+1}{2}$, and $\frac{q-1}{2}$. Therefore, if $\frac{q+1}{2}$ has $m$ prime divisors and $\frac{q-1}{2}$ has $n$ prime divisors, the prime graph of $\PSL(2,q)$ consists of an isolated point, an $m$-clique, and an $n$-clique, mutually disconnected with each other.
\end{proof}

\begin{remark}
This completes Theorem \ref{cube free order} in that it tells us what exactly are the graphs of $\PSL(2,q)$ of cube free order.
\end{remark}

\section{Groups of Composition Factors Cyclic or $A_5$}
In this section, we study prime graphs of groups whose composition factors are either cyclic or $A_5$. We call these groups pseudo-solvable, and we call nonsolvable pseudo-solvable groups strictly pseudo-solvable. In the literature, a group is often said to be almost solvable if the quotient by its solvable radical is trivial or is isomorphic to $A_5$. Our notion of pseudo-solvability is its generalization.\bigskip

Notice that a group is solvable if and only if its composition factors are all cyclic. Now, by allowing a non-abelian simple group $A_5$ in the composition series, we are giving up solvability on the groups that we are working on. As a consequence, we do not have the general setting and tools established in Section 2 to study prime graphs of these groups. In particular, we do not have the following.
\begin{enumerate}
    \item Lucido's Three Primes Lemma, which would give us that the complement is triangle free.
    \item Hall subgroups exist, which would allow us to zoom in onto induced subgraphs of the prime graph.
    \item If the prime graph is disconnected, the group is Frobenius or 2-Frobenius, which would allow us to direct the edges in the complement of the prime graph and study the directed paths.
\end{enumerate}

To deal with this difficulty, we will seek alternatives to (2) and (3). Hall subgroups are easy to work with because their prime graphs are induced subgraphs of the prime graph of the whole group. In general, if $K$ is a subgroup of $ G $, $\Gamma(K)$ is a subgraph of $\Gamma(G)$, though not necessarily an induced subgraph. In other words, $\overline{\Gamma}(K)$ will have fewer vertices, but more edges among the vertices. The same holds if $K$ is a quotient of $G$. This is not ideal as in (2), but we have that all conditions that are closed under removing edges on $\overline{\Gamma}(K)$ will hold on the induced subgraph of $\overline{\Gamma}(G)$ by $V(K)$, which allows us to make the reduction to subgroups. To find a substitution for (3), we attempt to direct an edge in $\overline{\Gamma}(G)$ by viewing it as an edge in $\overline{\Gamma}(K)$ for some certain solvable subgroup $K\leq G$. Before showing the details of this orientation, we start with several technical lemmas. \bigskip

\begin{lemmaN}\label{Semidirect second lemma}
Let $N$ normal in $G$ and define $\phi: G\to \Aut(N)$ by sending $g$ to $\phi_g:n\mapsto gng^{-1}$. If $\phi_h\in \Inn(N)$ for some $h\not\in N$ of order $q$ in $G$, then there exists an element in $G$ of order $pq$ for any $p\mid |N|$ such that $p \not= q$. In particular, if $q\nmid |Aut(N)|$, $\phi_h=1$ and therefore $\phi_h \in \Inn(N)$.
\end{lemmaN}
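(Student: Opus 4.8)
The plan is to break the statement into its two assertions. For the first, suppose $\phi_h = \phi_x \in \Inn(N)$ for some $h \notin N$ of order $q$ and some $x \in N$, where $\phi_x : n \mapsto xnx^{-1}$. Then $x^{-1}h$ centralizes $N$: indeed $\phi_{x^{-1}h} = \phi_x^{-1}\phi_h = 1$. So replacing $h$ by $h_0 := x^{-1}h$, we have an element $h_0 \in C_G(N)$. The element $h_0$ may no longer have order $q$, and it may now lie in $N$; I would first dispose of the possibility $h_0 \in N$: if $x^{-1}h \in N$ then $h \in N$, contrary to hypothesis, so $h_0 \notin N$, hence $h_0$ has nontrivial image in $G/N$. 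Now the subgroup $\langle N, h_0\rangle$ is a central product-like configuration: $h_0$ centralizes $N$, so $\langle N, h_0 \rangle = N \cdot \langle h_0 \rangle$ with $\langle h_0\rangle$ centralizing $N$. Pick a prime power divisor — actually just use Cauchy: since $h_0 N$ is a nontrivial element of $G/N$ of order dividing $q$ (because $h_0^q = x^{-q} \cdot (\text{something})$... here one must be slightly careful), we extract from $\langle h_0 \rangle$ an element $z$ centralizing $N$ whose image in $G/N$ has order $q$; then $z$ itself has order divisible by $q$, and taking an appropriate power we get $z'$ of order exactly $q$ still centralizing $N$ (this uses that the $q$-part of $z$ maps onto the order-$q$ element downstairs). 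Then for any prime $p \mid |N|$ with $p \neq q$, Cauchy gives $y \in N$ of order $p$; since $z'$ centralizes $y$ and $\gcd(p,q)=1$, the element $yz'$ has order $pq$.

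The one genuinely delicate point in the above is controlling the order of $h_0 = x^{-1}h$ and extracting from $\langle h_0 \rangle$ an element of order exactly $q$ centralizing $N$: a priori $x^{-1}h$ could have order a large multiple of $q$, or $q^2$, etc. The clean fix is to pass to the $q$-part: let $z$ be the $q$-part of $h_0$ (a power of $h_0$, hence still in $C_G(N)$), and observe that the image of $z$ in $G/N$ equals the $q$-part of the image of $h_0$. Since $h_0 N = x^{-1}hN = hN$ has order $q$ in $G/N$ (as $h$ has order $q$ and $h \notin N$), its $q$-part is itself, so $z N$ has order $q$; in particular $z \neq 1$, $z \notin N$, and $z$ is a $q$-element. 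If $z$ has order $q$ we are done; if $z$ has order $q^k$ with $k>1$, its image in $G/N$ would have order $q^k$, contradicting $|hN| = q$ — so in fact $z$ has order exactly $q$. This is the step I expect to require the most care to phrase correctly; everything else is routine Cauchy-plus-commuting-elements bookkeeping.

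For the second assertion, suppose $q \nmid |\Aut(N)|$. Then $\phi_h \in \Aut(N)$ has order dividing $q$ (since $\phi$ restricted to $\langle h\rangle$ is a homomorphism from a group of order $q$), and its order also divides $|\Aut(N)|$; as $\gcd(q, |\Aut(N)|) = 1$, the order of $\phi_h$ is $1$, i.e. $\phi_h = 1 = \phi_1 \in \Inn(N)$. (Here $\phi_h$ need not be trivial a priori as a map — it is the conclusion — so one invokes that a homomorphic image of a cyclic group of prime order has order $1$ or $q$, and $q$ is excluded.) This gives the ``in particular'' clause directly, and combined with the first part it shows that under $q \nmid |\Aut(N)|$ there is an element of order $pq$ for every prime $p \mid |N|$, $p \neq q$ — which is how the lemma will be used downstream to force edges in $\Gamma(G)$.
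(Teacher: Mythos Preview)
Your overall strategy is correct and is in fact cleaner than the paper's. The paper first disposes of the case $q\mid |Z(N)|$, then modifies the inner element $a$ by a central factor so that $a^q=1$, and finally proves by an explicit induction that $(ba^{-1}h)^k=b^k a^{-k} h^k$, from which the order of $ba^{-1}h$ is read off. Your route---observe that $h_0=x^{-1}h$ lies in $C_G(N)\setminus N$, extract from $\langle h_0\rangle$ a $q$-element $z'$ of order $q$, and pair it with a Cauchy element $y\in N$ of order $p$---bypasses both the case split and the inductive computation, since $z'\in C_G(N)$ already gives $[y,z']=1$ and hence $|yz'|=pq$.

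There is, however, one genuinely false step in your ``clean fix'' paragraph: the claim that if the $q$-part $z$ of $h_0$ has order $q^k$ with $k>1$ then its image in $G/N$ must have order $q^k$. That is not true; the image of a $q$-element can have strictly smaller $q$-power order (and here it does: $zN=hN$ has order $q$ regardless of $k$). Fortunately this does not damage the argument, because you never actually need $z\notin N$ or $z$ to have order exactly $q$ by that reasoning. Simply take $z'=z^{q^{k-1}}$ (equivalently, any element of order $q$ in the cyclic group $\langle h_0\rangle$, which exists since $q$ divides $|h_0|$). Then $z'$ has order exactly $q$ and still lies in $C_G(N)$, and the product $yz'$ with any $y\in N$ of order $p\neq q$ has order $pq$. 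Whether $z'$ lands in $Z(N)$ or outside $N$ is irrelevant. Your first paragraph already gestured at exactly this (``taking an appropriate power we get $z'$ of order exactly $q$ still centralizing $N$''); that version is correct and is all you need, so drop the second paragraph's attempted sharpening.

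Your treatment of the ``in particular'' clause is correct and matches the intended use.
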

\begin{proof}
If $q\mid |Z(N)|$, we are already done, because there exists an element in $N$ of order $pq$ for any $p\mid |N|$. Now we assume otherwise. Since $\phi_h\in \Inn(N)$, there exists $a\in N$ such that $\phi_h=\phi_a$. Since $h^q=1\in G$, $(\phi_a)^q=(\phi_h)^q=1$. So $a^q=z\in Z(N)$.\bigskip

Suppose $z$ has order $k$, then $q\nmid k$ since $q\nmid |Z(N)|$ by assumption. There exists integer $l$ such that $q\mid lk-1$. Let $z'=z^{\frac{lk-1}{q}}\in Z(N)$ and $a'=az'$. Then $ana^{-1}=(az')n(az')^{-1}=a'n(a')^{-1}$. Furthermore, $(a')^q=(az')^{q}=a^q(z')^q=z (z')^q=zz^{lk-1}=z^{lk}=1$. Therefore, we can assume without loss of generality that there exists $a\in N$ such that $a^q=1$ and $\phi_h=\phi_a$. \bigskip

For any $p\mid |N|$, let $b\in N$ be an element of order $p$. $\phi_a(b^{k}a^{-k})=\phi_h(b^{k}a^{-k})$ gives us that $a^{-1}hb^{k}a^{-k}=b^k a^{-k-1}h$. Prove by induction that $(ba^{-1}h)^k=b^k a^{-k}h^k$. When $k=1$, this holds trivially. Assuming $(ba^{-1}h)^k=(b^k a^{-k}h^k)$, we have that $(ba^{-1}h)^{k+1}=b(a^{-1}hb^k a^{-k}) h^k=b(b^k a^{-k-1}h)h^k=b^{k+1}a^{-k-1}h^{k+1}$. So indeed, we have that $(ba^{-1}h)^k=b^k a^{-k}h^k$. We will show that $ba^{-1}h$ has order $pq$.\bigskip

Note that the order of $hN$ in $G/N$ divides $q$, the order of $h$ in $G$. Since $h\in N$, $hN$ has order $q$ in $G/N$. Now, if $1=(ba^{-1}h)^k=b^ka^{-k}h^k$, we have that $h^k\in N$ because $a$ and $b$ are both in $N$. Then $q\mid k$ because $q$ is the order of $hN$ in $G/N$. This gives us that $a^{-k}=1$, because $a^q=1$ by our choice of $a$. Then, $1=b^ka^{-k}h^k=b^k$, so $p\mid k$. Therefore, $pq\mid k$ if $1=(ba^{-1} h)^k$, and the converse is obviously true. So $ba^{-1}h$ has order $pq$.  \end{proof}

\begin{lemmaN}
Let $G$ be strictly pseudo-solvable, then $G$ contains a subgroup $K\simeq N.(A_5\times H)$, where $N$ solvable and $H$ solvable of order coprime to $30$. Furthermore, $|K|$ and $|G|$ share the same set of prime divisors.
\end{lemmaN}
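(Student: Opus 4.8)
\emph{Proof proposal.} The plan is to reduce to an auxiliary fact about pseudo-solvable groups and then to build $K$ explicitly from the components of $G/R$, where $R=\operatorname{Sol}(G)$ is the solvable radical. The auxiliary fact, call it $(\ast)$, is: \emph{every pseudo-solvable group $L$ contains a solvable subgroup $S$ with $|S|$ coprime to $30$ and $\pi(S)=\{p\in\pi(L):p>5\}$.}

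To prove $(\ast)$ I would induct on $|L|$. If $L$ is solvable, a Hall $\{2,3,5\}'$-subgroup works. If $L$ is nonsolvable, pass to $\bar L=L/\operatorname{Sol}(L)$; since $F(\bar L)=1$, the subgroup $E$ generated by the components of $\bar L$ is self-centralising, and each component is quasisimple with central quotient $A_5$ and trivial centre (as $Z(E)\le F(\bar L)=1$), hence isomorphic to $A_5$. Thus $E\cong A_5^k$ and $\bar L$ embeds in $\Aut(A_5^k)=S_5\wr S_k$ with $E$ the base group. Let $Q$ be the image of $\bar L$ in $S_k$; its kernel $D=\bar L\cap S_5^k$ contains $E$ and has $\pi(D)=\{2,3,5\}$, so $Q$ is a strictly smaller pseudo-solvable quotient of $L$ with $\{p\in\pi(\bar L):p>5\}=\{p\in\pi(Q):p>5\}$. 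Apply the induction hypothesis to $Q$, lift the resulting solvable $\{2,3,5\}'$-subgroup of $Q$ through $D$ by Schur--Zassenhaus (the orders are coprime), pull the lift back to $L$ through $\operatorname{Sol}(L)$, and take a Hall $\{2,3,5\}'$-subgroup of that (solvable) subgroup; this is the required $S$.

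Granting $(\ast)$, here is the construction. Put $R=\operatorname{Sol}(G)$ and $\bar G=G/R$; as above $\bar G$ embeds in $S_5\wr S_k$ with base group $E=E(\bar G)\cong A_5^k$, $Q$ the image in $S_k$, $D=\bar G\cap S_5^k\supseteq E$, so $\pi(D)=\{2,3,5\}$ and $\pi(\bar G)=\{2,3,5\}\cup\pi(Q)$. Apply $(\ast)$ to $Q$ to get a solvable $S_Q\le Q$ with $|S_Q|$ coprime to $30$ and $\pi(S_Q)=\{p\in\pi(Q):p>5\}$, and lift $S_Q$ through $D$ by Schur--Zassenhaus (coprime orders) to a solvable complement $S\le\bar G$, so $S\cong S_Q$, $|S|$ is coprime to $30$, and $\pi(S)=\{p\in\pi(\bar G):p>5\}$. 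Now $S$ acts by conjugation on $E\cong A_5^k$, permuting the $k$ components via $S\to S_k$; since $|S|$ is coprime to $|S_5|=120$, the stabiliser in $S$ of any single component acts trivially on that component, and this forces $C_E(S)$ to contain a copy of $A_5$ --- on each $S$-orbit of components the fixed-point equations become consistent and cut out a diagonally embedded $A_5$. Fix such a $\Delta\cong A_5$ in $C_E(S)$. Since $S$ centralises $\Delta$ and $|S|$ is coprime to $|E|$, we get $\langle\Delta,S\rangle=\Delta\times S\cong A_5\times S$ with $\pi(\Delta\times S)=\{2,3,5\}\cup\pi(S)=\pi(\bar G)$. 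Taking $K$ to be the full preimage of $\Delta\times S$ in $G$, we have $R\trianglelefteq K$ and $K/R\cong A_5\times S$ with $R$, $S$ solvable and $|S|$ coprime to $30$; so $K\cong N.(A_5\times H)$ with $N=R$, $H=S$, and $\pi(K)=\pi(R)\cup\pi(\bar G)=\pi(G)$.

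The step I expect to be the main obstacle is the fixed-point claim $A_5\cong\Delta\le C_E(S)$: one must check that coprimality of $|S|$ with $120$ kills the automorphism ``twists'' attached to non-trivial point stabilisers, so that on each $S$-orbit among the $k$ components the fixed-point equations are solvable and their solution set is exactly a diagonal $A_5$. This is the one place the argument genuinely uses the structure of $\Aut(A_5^k)$ together with the ``coprime to $30$'' hypothesis, as opposed to routine prime bookkeeping; the remaining ingredients --- Schur--Zassenhaus, the identification of $E(\bar G)$ with $A_5^k$ (excluding $\operatorname{SL}(2,5)$ components via $F(\bar G)=1$), and tracking the sets $\pi$ --- are straightforward by comparison.
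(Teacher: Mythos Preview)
Your argument is correct and reaches the same conclusion as the paper, but by a somewhat different route. The paper's proof is shorter at the start because it observes that a pseudo-solvable group is $\{2,3,5\}$-separable (every chief factor is either a $p$-group or a direct power of $A_5$, hence a $\{2,3,5\}$-group), so a Hall $\{2,3,5\}'$-subgroup $H$ exists immediately; this makes your inductive lemma $(\ast)$ unnecessary. The paper then lets $N$ be the term of a chief series just below the first nonabelian chief factor $A_5^k$ and works in $G/N$, whereas you take $N=\operatorname{Sol}(G)$ and identify $A_5^k$ as $E(\bar G)$ via generalized-Fitting machinery. Both choices work and lead to the same picture: a solvable $\pi'$-group acting by conjugation on a normal $A_5^k$.

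On the step you flag as the main obstacle --- finding $\Delta\cong A_5$ inside $C_{A_5^k}(S)$ --- you are in fact more careful than the paper. The paper asserts that every element of $H$ has image lying in $\{1\}^k\times S_k$ inside $\Aut(A_5^k)\cong S_5\wr S_k$, which is not literally true element by element: a $\pi'$-element of the wreath product can have nontrivial base component (for instance, over a $7$-cycle in $S_k$ one can insert base entries whose product along the cycle is trivial). What \emph{is} true, and is exactly what your ``fixed-point equations become consistent'' remark encodes, is that the image of $H$ (or your $S$) is a $\pi'$-subgroup of $S_5\wr S_k$, hence by Schur--Zassenhaus it is conjugate by an element of the normal $\pi$-subgroup $S_5^k$ into $1\wr S_k$; after that conjugation the ordinary diagonal is centralized, so before it some twisted diagonal is. This cleanly discharges your obstacle and simultaneously repairs the paper's version of the step. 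In summary: your proof is sound; the paper's use of $\pi$-separability is the more economical way to produce the $\pi'$-subgroup, while your treatment of the centralizer step is the more accurate of the two.
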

\begin{proof}
First, build a normal series of $G$ by factoring out a minimal normal subgroup of the quotient at each step. Since minimal subgroups of a finite group must be a direct product of the same simple group, we have that each factor in the normal series is either elementary abelian or $A_5^k$, a direct product of $k$ copies of $A_5$. Let $\pi=\{2,3,5\}$. Since the composition factors of $G$ are either cyclic or $A_5$, $G$ is $\pi$-separable.\bigskip

Suppose that we have in the normal series, the lowest factor is $A_5^k$. Since $G$ is $\pi$-separable, $G$ contains a Hall $\pi$-subgroup $H$. Note that $|H|$ is coprime to $30$ and is divisible by any other prime that divides $|G|$. Since $A_5^k\nor G$, $H$ acts on $A_5^k$ by conjugation. Notice that each nontrivial element in $H$ has order coprime to 30, so its image under the group homomorphism $\phi: H\to \Aut(A_5^n)=S_5^n\times S_n$ must be in $\{0\}^n\times S_n$, where the equality is from \cite[Theorem 3.1]{Bidwell2008}. In other words, each element in $H$ acts by permuting the $n$ copies of $A_5$. Consider $L=\{(a,a,\cdots, a):a\in A_5\}$ a subgroup of $A_5^n$. $L$ is $H$-invariant and isomorphic to $A_5$. Similarly, the image of any element in $H$ under $\phi':H \to \Aut(L)=\Aut(A_5)=S_5$ must be trivial by considering its order, which indicates that $H$ acts trivially on $L$. Also, $H$ and $L$ has trivial intersection. So $K$ has a subgroup $L\times H\simeq A_5\times H$.\bigskip

In general, there would be a collection of elementary abelian groups below the first $A_5^k$ in the normal series. Let $N$ be the normal subgroup of $G$ just below the first $A_5^k$ in the normal series. Apply the previous argument to $G/N$ to get a subgroup $K'\simeq A_5\times H$ of $G/N$, where $H$ is the Hall $\pi$-subgroup of $G/N$. So the preimage of $K'$ under the projection $G\to G/N$ is a subgroup of $G$ isomorphic to $N.(A_5\times H)$. 
\end{proof}
\begin{corollaryN}\label{first reduction of C groups}
Let $G$ be strictly pseudo-solvable, then $\overline{\Gamma}(G)$ is obtained by removing edges from $\overline{\Gamma}(K)$ for some $K\simeq N.(A_5\times H)$, where $N$ solvable and $H$ solvable of order coprime to $30$. 
\end{corollaryN}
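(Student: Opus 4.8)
The plan is to read this off directly from the lemma just established, together with the general principle recorded at the start of this section: if $K\le G$ then $\Gamma(K)$ is a subgraph of $\Gamma(G)$ with possibly fewer vertices but no extra edges, so on a common vertex set passing to $K$ can only \emph{add} edges to the complement prime graph. First I would apply the preceding lemma to the strictly pseudo-solvable group $G$ to obtain a subgroup $K\le G$ with $K\simeq N.(A_5\times H)$, where $N$ is solvable and $H$ is solvable of order coprime to $30$, and — crucially — with $\pi(K)=\pi(G)$.

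Next I would spell out the edge inclusion. Since $K$ is a subgroup of $G$, every element of $K$ is an element of $G$; in particular, whenever $K$ contains an element of order $pq$ so does $G$, which says exactly that $E(\Gamma(K))\subseteq E(\Gamma(G))$. By the lemma the two graphs have the same vertex set, $V(\Gamma(K))=\pi(K)=\pi(G)=V(\Gamma(G))$, so complementing within this fixed vertex set reverses the inclusion: $E(\overline{\Gamma}(G))\subseteq E(\overline{\Gamma}(K))$. Equivalently, $\overline{\Gamma}(G)$ is obtained from $\overline{\Gamma}(K)$ by deleting the edges in $E(\overline{\Gamma}(K))\setminus E(\overline{\Gamma}(G))$, which is the assertion of the corollary.

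There is essentially no obstacle here beyond what the lemma already supplies; the content of the reduction lives entirely in the construction of $K$. The one point that genuinely needs the full strength of the lemma is the equality of prime sets $\pi(K)=\pi(G)$: without it the complement graphs would sit on different vertex sets and the phrase ``obtained by removing edges'' would be vacuous. This is precisely why the ``$|K|$ and $|G|$ share the same set of prime divisors'' clause was built into the statement of the lemma, and invoking it is all that the proof of the corollary needs to do.
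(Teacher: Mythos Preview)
Your argument is correct and is exactly the intended one: the paper does not write out a proof of this corollary because it is immediate from the preceding lemma together with the observation, recorded at the start of the section, that for $K\leq G$ with $\pi(K)=\pi(G)$ the complement prime graph $\overline{\Gamma}(G)$ is obtained from $\overline{\Gamma}(K)$ by deleting edges. You have identified the key point precisely, namely that the equality $\pi(K)=\pi(G)$ from the lemma is what makes the complements live on the same vertex set.
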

Thereby, we have reduced the problem of studying pseudo-solvable groups to groups of the above form.
\begin{lemmaN} \label{Hall respects extension}
Let $H$ be a Hall $\pi$-subgroup of $G$ and $N$ normal in G. Then in the normal series $(H\bigcap N).(H/H\bigcap N)\simeq(H\bigcap N).(HN/N)$ of $H$, the factor $H\bigcap N$ is a Hall $\pi$-subgroup of $N$, and the factor $HN/N$ is a Hall $\pi$-subgroup of $G/N$.
\end{lemmaN}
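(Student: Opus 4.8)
The plan is to use nothing beyond the second isomorphism theorem and elementary index arithmetic. Write $|G|_\pi$ for the $\pi$-part of $|G|$; by definition a Hall $\pi$-subgroup $H$ satisfies $|H|=|G|_\pi$, so that $[G:H]=|G|_{\pi'}$ is a $\pi'$-number, and likewise every subgroup of $H$ is a $\pi$-group while every quotient of $H$ is a $\pi$-group. Since $N$ is normal in $G$, the product $HN$ is a subgroup of $G$, so all the indices appearing below are well defined.

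First I would treat the bottom factor $H\cap N$. As a subgroup of the $\pi$-group $H$ it is a $\pi$-group, so it suffices to show that $[N:H\cap N]$ is a $\pi'$-number. By the product formula $[N:H\cap N]=[HN:H]$, and $[HN:H]$ divides $[G:H]$, which is a $\pi'$-number; hence $H\cap N$ is a Hall $\pi$-subgroup of $N$. Next the top factor: by the second isomorphism theorem $HN/N\simeq H/(H\cap N)$, which is a quotient of the $\pi$-group $H$ and hence a $\pi$-group (this also justifies writing the displayed normal series of $H$). It remains to check that $[G/N:HN/N]$ is a $\pi'$-number, and indeed $[G/N:HN/N]=[G:HN]$ divides $[G:H]$, again a $\pi'$-number; so $HN/N$ is a Hall $\pi$-subgroup of $G/N$.

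There is no genuine obstacle in this argument; the only point to keep in mind is that each index occurring ($[HN:H]$, $[G:HN]$) divides $[G:H]$, and it is this single divisibility fact that forces each factor of the series to have $\pi'$-index in the relevant group. The existence of Hall $\pi$-subgroups of $N$ and $G/N$ is not needed as a hypothesis — it is produced along the way — so the lemma applies in the $\pi$-separable setting of Section 6 exactly as used there.
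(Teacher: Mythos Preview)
Your proof is correct and follows essentially the same route as the paper: both arguments use the product formula to identify $[N:H\cap N]=[HN:H]$ and then exploit that $[HN:H]$ and $[G:HN]$ divide the $\pi'$-number $[G:H]$. The only cosmetic difference is that the paper phrases the conclusion via $\gcd\{|H|,[G:H]\}=1$ while you phrase it in terms of $\pi$-groups and $\pi'$-indices.
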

\begin{proof}
Naively, we have $|H\bigcap N|$ divides $|H|$. Since $[N:H\bigcap N]=[HN:H]$ divides $[G:H]$, we have that $\gcd{|H\bigcap N|,[N:H\bigcap N]}$ divides $\gcd{|H|,[G:H]}=1$. So $H\bigcap N$ is a Hall $\pi$-subgroup of $N$.\bigskip

On the other hand, $[HN:N]=[H:H\bigcap N]$ divides $|H|$ and $[(G/N):(HN/N)]=[G:HN]$ divides $[G:H]$. So $\gcd{[HN:N],[(G/N):(HN/N)]}$ divides $\gcd{|H|,[G:H]}=1$. So $HN/N$ is a Hall $\pi$-subgroup of $G/N$.
\end{proof}
\begin{remark}
By iterating the above lemma, we can construct a normal series for a Hall subgroup $H$ of $G$ based on a normal series of $G$. And each factor in the normal series of $H$ is isomorphic to a Hall subgroup of the corresponding factor in the normal series of $G$.
\end{remark}
\begin{lemmaN}\label{no q on top}
If $G$ is Frobenius of type $(p,q)$ or $2$-Frobenius of type $(p,q,p)$, then there does not exist a normal subgroup $N$ of $G$ such that $G/N$ is a nontrivial $q$-group.
\end{lemmaN}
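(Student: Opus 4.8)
The plan is to handle the two possibilities for $G$ separately, reducing the $2$-Frobenius case to the Frobenius case by factoring out $\Fit(G)$.

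First I would dispose of the case $G=K\rtimes H$ Frobenius of type $(p,q)$, where $K$ is a nontrivial $q$-group, $H$ a nontrivial $p$-group, and $p\neq q$ since kernel and complement have coprime order. Suppose toward a contradiction that $N\trianglelefteq G$ with $G/N$ a nontrivial $q$-group. The key structural input is the well-known dichotomy that every normal subgroup of a Frobenius group is either contained in the kernel or contains it; I would include the short proof (if $N\not\le K$, pick $1\neq g\in N$ and, conjugating within the normal subgroup $N$, arrange $g\in H$; then $C_K(g)=1$, so $k\mapsto k^{-1}k^{g}=(g^{-1})^{k}g$ is a bijection of $K$ with image inside $N$, forcing $K\le N$). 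Given the dichotomy, a parity argument finishes the case: if $K\le N$ then $G/N$ is a quotient of $G/K\cong H$ and hence is simultaneously a nontrivial $p$-group and a $q$-group, which is impossible; if $N\le K$ then $H\cong G/K$ is a quotient of the $q$-group $G/N$, again contradicting that $H$ is a nontrivial $p$-group.

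Next, for $G$ a $2$-Frobenius group of type $(p,q,p)$ I would use that $F_1=\Fit(G)$ is a nontrivial $p$-group normal in $G$, that $F_2/F_1$ is a nontrivial $q$-group, and that $G/F_2$ is a nontrivial $p$-group. If $N\trianglelefteq G$ with $G/N$ a nontrivial $q$-group, then the image of the $p$-group $F_1$ in the $q$-group $G/N$ is trivial, so $F_1\le N$. Hence $N/F_1\trianglelefteq G/F_1$ and $(G/F_1)/(N/F_1)\cong G/N$ is a nontrivial $q$-group. But $G/F_1$ is Frobenius with kernel $F_2/F_1$ (a $q$-group) and complement isomorphic to $G/F_2$ (a $p$-group), i.e.\ Frobenius of type $(p,q)$, so the previous paragraph applied to $G/F_1$ rules this out.

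I expect the only genuinely nonroutine point to be the dichotomy for normal subgroups of Frobenius groups together with the bookkeeping about which prime sits where; everything else is an immediate ``$p\neq q$'' observation. If one prefers to avoid invoking the dichotomy, an equivalent route is to observe directly that every element of $G$ of order prime to $q$ must lie in $N$ (its image in the $q$-group $G/N$ is trivial), hence $H\le N$; since the fixed-point-free action of $H$ on $K$ gives $[K,H]=K$, it follows that $K\le N$ as well, so $N=G$, contradicting the nontriviality of $G/N$.
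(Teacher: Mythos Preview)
Your argument is correct. The reduction from the $2$-Frobenius case to the Frobenius case by passing to $G/F_1$ is exactly what the paper does (the paper phrases it as quotienting by $S=O_p(G)$, which equals $F_1$ here), so the two proofs agree on that step.

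Where you diverge is in the Frobenius case itself. You invoke the standard dichotomy for normal subgroups of a Frobenius group (either $N\le K$ or $K\le N$) and finish with a two-line parity check; your sketch of the dichotomy via the bijection $k\mapsto k^{-1}k^g$ is the usual one and is fine (the phrase ``conjugating within the normal subgroup $N$'' should really read ``using normality of $N$ to conjugate $g$ into $H$,'' but the intent is clear). The paper instead avoids citing the dichotomy and works directly: it takes $M$ with $H/M$ the largest $q$-quotient, chooses a chief factor $M/L$ (necessarily an elementary abelian $p$-group), picks $x\in Q\setminus M$ and $y\in M\setminus L$, and argues that the commutator $z=[x,y]$ is simultaneously a nontrivial element of the $q$-group $Q$ and has order divisible by $p$. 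Your route is shorter, more transparent, and relies only on a textbook fact; the paper's route is more self-contained but requires more bookkeeping to justify why $z\neq 1$ and $z\notin L$. Your alternative at the end (observing $H\le N$ since all $p$-elements die in the $q$-quotient, then $K=[K,H]\le N$) is perhaps the slickest of all three and would make a nice one-paragraph proof.
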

\begin{proof}
Let $S=O_p(G)$ the Sylow $p$-subgroup of $F(G)$, so $S=1$ when $G$ is Frobenius of type $(p,q)$ and $S$ is a $p$-group when $G$ is $2$-Frobenius of type $(p,q,p)$. Let $H=G/S$, then $H$ is a Frobenius group of type $(p,q)$.\bigskip

Now assume that there exists a normal subgroup $N$ of $G$ such that $G/N$ is a nontrivial $q$-group. $N$ is clearly not a subgroup of $S$ because the quotient by any subgroup of $S$ will have order divisible by $p$. So $M=NS/S >1$, and $M$ is a normal subgroup of $H$ such that $H/M$ is a nontrivial $q$-group. By now, we have reduced the problem to proving the statement for $H$ and $M$.\bigskip

Next, take $M\nor H$ such that $H/M$ is the largest quotient of $H$ that is a $q$-group. Let $L\nor H$ such that $L$ is a subgroup of $M$ and $M/L$ is a minimal normal subgroup of $H/L$. Then $H/L$ is an
elementary abelian $p$-group.\bigskip

Now let $x$ be a nonidentity element of the (unique, normal) Sylow $q$-subgroup $Q$ of $H$ such that $x$ is not an element of $M$. Also, let $y$ be an element of $M$ such that $y$ is not contained in $L$. Let $z=[x,y]=x^{-1}y^{-1}xy$ be the commutator of $x$ and $y$. We have the following.
\begin{enumerate}
    \item $x$ is an element of $Q$ on which $y$ acts Frobeniusly, so $z=(x^{-1}) x^y$ is in $Q\setminus\{1\}$, and thus is a nontrivial element of $q$-power order.
    \item $y$ is in $M\setminus L$, and as seen in (1), $z$ is not the identity element, and also $z=(y^{-1})^x y$ is an element of $M\setminus L$. Since $M/L$ is a $p$-group, this shows that $z$ has order divisible by $p$.
\end{enumerate}

(1) and (2) contradicts with each other.

\end{proof}

\begin{theoremN}\label{weak result on C-groups}
Let $G$ be pseudo-solvable, then $\overline{\Gamma}(G)\setminus\{3\text{-}5\}$ is $3$-colorable and triangle free.
\end{theoremN}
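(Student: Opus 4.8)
The overall plan is to reduce to a group with an $A_5$ sitting on top of a solvable normal subgroup, and then prove $3$-colorability and triangle-freeness separately; the whole point of the edge $3\text{-}5$ is that $A_5$ has no \emph{solvable} subgroup of order divisible by $15$. If $G$ is solvable the statement is immediate from Theorem \ref{PG of Solvable Groups}, since deleting an edge preserves both $3$-colorability and triangle-freeness. So assume $G$ is strictly pseudo-solvable. By Corollary \ref{first reduction of C groups}, $\overline{\Gamma}(G)$ is obtained by deleting edges from $\overline{\Gamma}(K)$ for some $K\simeq N.(A_5\times H)$ with $N,H$ solvable and $\gcd(|H|,30)=1$, and since $K$ and $G$ have the same prime divisors it suffices to prove the assertion for $K$. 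Let $R$ be the preimage in $K$ of $1\times H\nor A_5\times H=K/N$. Then $R$ is solvable, $R\nor K$, $K/R\cong A_5$, and $\pi(K)=\pi(R)\cup\{2,3,5\}$; and for any $S\le A_5$ the preimage $L_S$ of $S$ in $K$ satisfies $R\le L_S$, $L_S/R\cong S$ and $\pi(L_S)=\pi(R)\cup\pi(S)$, so $L_S$ is solvable whenever $S$ is.

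For triangle-freeness I would show that every triangle of $\overline{\Gamma}(K)$ contains both $3$ and $5$; deleting $3\text{-}5$ then destroys all triangles. Let $\{p,q,r\}$ be a triangle of $\overline{\Gamma}(K)$ and put $\sigma=\{p,q,r\}\cap\{2,3,5\}$, so the remaining primes lie in $\pi(R)$. If $\{3,5\}\not\subseteq\sigma$ then $\sigma\in\{\emptyset,\{2\},\{3\},\{5\},\{2,3\},\{2,5\}\}$, and in each of these cases $A_5$ has a solvable subgroup $S$ with $\pi(S)=\sigma$ (namely $1$, a group of order $2$, $C_3$, $C_5$, $A_4$, $D_{10}$, respectively). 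Then $L_S$ is solvable, $\{p,q,r\}\subseteq\pi(R)\cup\sigma=\pi(L_S)$, and $L_S\le K$ has no element of order $pq$, $pr$ or $qr$; hence $\{p,q,r\}$ is a triangle of $\overline{\Gamma}(L_S)$, contradicting Lucido's Three Primes Lemma. The remaining case $\{3,5\}\subseteq\sigma$ is exactly the one this argument cannot (and should not) rule out, because $A_5$ has no solvable subgroup of order divisible by $15$.

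For $3$-colorability I would, via the Gallai--Hasse--Roy--Vitaver theorem (Theorem \ref{graphtheory}), orient $\overline{\Gamma}(K)\setminus\{3\text{-}5\}$ with no directed $3$-path. On the edges with both ends in $\pi(R)$ I would use a fixed $3$-path-free orientation of $\overline{\Gamma}(R)$ (legitimate, since such an edge is already an edge of $\overline{\Gamma}(R)$ as $R\le K$). For the edges meeting $\Pi_0:=\{2,3,5\}\setminus\pi(R)$ I would split according to whether $K\to\Out(R)$ is trivial or has image $A_5$. In the trivial case each $s\in\Pi_0$ is the order of an element outside $R$ acting on $R$ by an inner automorphism (a Sylow $s$-subgroup of $K$ embeds in one of $A_5$), so Lemma \ref{Semidirect second lemma} shows $s$ is joined in $\Gamma(K)$ to every prime of $\pi(R)$; hence $\Pi_0$ is isolated from $\pi(R)$ in $\overline{\Gamma}(K)$, and $\overline{\Gamma}(K)\setminus\{3\text{-}5\}$ is a disjoint union of a subgraph of $\overline{\Gamma}(R)$ with a subgraph of the path $3\text{-}2\text{-}5$, which is $3$-colorable. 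In the image-$A_5$ case, orient each edge $s\text{-}v$ with $s\in\Pi_0$ via the Frobenius digraph of a suitably chosen solvable $L_S$ with $s\in\pi(S)$: by Lemma \ref{Hall respects extension} the Hall $\{s,v\}$-subgroup of $L_S$ surjects onto the corresponding Hall subgroup of $S$, which can be arranged to have a nontrivial $s$-group quotient, so by Lemma \ref{no q on top} the edge must be oriented $s\to v$ (and the edges surviving inside $\{2,3,5\}$ are oriented similarly).

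The main obstacle is to verify that this last orientation has no directed $3$-path in the image-$A_5$ case. Any directed $3$-path must meet $\Pi_0$; since $3$ and $5$ become sources and every edge leaving $\Pi_0$ points into $\pi(R)$ (or between the remaining vertices of $\{2,3,5\}$), one has to control how such a path can be continued inside $\pi(R)$, and this is genuinely delicate because the candidate orientations produced by different solvable subgroups $L_S$ need not agree on a shared edge --- for instance $L_{A_4}$ and $L_{S_3}$ orient $2\text{-}3$ in opposite directions, each legitimately --- so the choices must be pinned down by hand (again through Lemma \ref{no q on top}) and $3$-path-freeness proved by a finite case analysis over which of $2,3,5$ lie in $\Pi_0$ and where they sit on the path, using once more that $A_5$ has no solvable subgroup of order divisible by $15$, so that $3$ and $5$ can never be forced onto a single directed path.
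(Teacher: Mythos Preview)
Your triangle-freeness argument is essentially the paper's: both show that every triangle in $\overline{\Gamma}(K)$ must contain both $3$ and $5$, by covering any other triple $\{p,q,r\}$ with a solvable $L_S$ for a suitable $S\le A_5$ and invoking Lucido's lemma.

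For $3$-colorability, however, your route diverges from the paper's and leaves the decisive step unfinished. The paper does \emph{not} split according to whether $K/R\to\Out(R)$ is trivial or has image $A_5$; that dichotomy buys nothing here. Instead, the paper first makes an additional reduction you omit: if $3\in\pi(R)$ (equivalently $3\mid|N|$), then $K_1=N.(D_{10}\times H)$ is a solvable subgroup with $\pi(K_1)=\pi(K)$, so $\overline{\Gamma}(K)$ is already $3$-colorable and triangle-free, and similarly if $5\in\pi(R)$. This reduces in one stroke to the case $\{3,5\}\subseteq\Pi_0$, eliminating most of your projected case analysis.

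In that remaining case the paper orients $\overline{\Gamma}(K)\setminus\{3\text{-}5\}$ by using exactly the two solvable subgroups $K_1=N.(D_{10}\times H)$ and $K_2=N.(A_4\times H)$ (missing only the vertex $3$, respectively $5$), checks that their Frobenius digraphs agree on every common edge, and concludes that any directed $3$-path must contain both $3$ and $5$---otherwise it would already be a $3$-path in $\overharpo{\Gamma}(K_1)$ or $\overharpo{\Gamma}(K_2)$. A short analysis then pins down the only possible shape, $3\to 2\to 5\to p$, and reversing the single edge $2\to 5$ to $5\to 2$ yields a $3$-path-free orientation (any new $3$-path would force $3\to 2\to r\to s$ in the old orientation, which was excluded).

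Your scheme---orienting edges inside $\pi(R)$ by some $3$-path-free orientation of $\overline{\Gamma}(R)$ and edges touching $\Pi_0$ by assorted $L_S$---does not assemble into a single digraph whose $3$-path-freeness is visible. The incompatibility you already note between $L_{A_4}$ and $L_{S_3}$ on $2\text{-}3$ is a symptom: there is no uniform choice of $L_S$ covering every edge, and in the critical case $\Pi_0=\{2,3,5\}$ the path $3\to 2\to 5\to v$ survives because no solvable $S\le A_5$ has order divisible by $15$. You gesture at a ``finite case analysis'' but give no mechanism to kill this path; the edge-reversal trick that actually does the job is absent from your outline. As written, this is a genuine gap at the heart of the $3$-colorability argument, and the paper's two-subgroup compatibility check together with the preliminary reduction to $\gcd(|N|,15)=1$ is precisely what closes it.
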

\begin{proof}
Note that the condition of being $3$-colorable and triangle free is closed under removing edges. If $G$ is itself solvable, $\overline{\Gamma}(G)$ is already $3$-colorable and triangle free, then so is $\overline{\Gamma}(G)$ with possibly one edge removed. In the case where $G$ is not solvable, it suffices to prove the statement for $K=N.(A_5\times H)$ by Corollary \ref{first reduction of C groups}. Furthermore, if $3$ divides $|N|$, consider the subgroup $K_1=N.(D_{10}\times H)$ of $K$. $K_1$ has the same set of prime divisors as $K$, so $\overline{\Gamma}(K)$ is obtained by removing edges from $\overline{\Gamma}(K_1)$, which is triangle free and $3$-colorable because $K_1$ is solvable. Therefore, $\overline{\Gamma}(K)$ is already triangle free and $3$-colorable, then so is $\overline{\Gamma}(K)$ possibly with one edge removed. Similarly, by taking $K_2=N.(A_4\times H)$, we know that if $5$ divides $|N|$, the statement is also true. Therefore, we may assume $K=N.(A_5\times H)$, where $N$ is solvable of order coprime to $15$ and $H$ is solvable of order coprime to $30$.\\

Still, we consider two solvable subgroups $K_1=N.(D_{10}\times H)$ and $K_2=N.(A_4\times H)$ of $K$. Since $|N|$ is coprime to $15$ and $|H|$ is coprime to $30$, we have that the prime divisors of $K_1$ are all prime divisors of $K$ except $3$. Therefore, $\overline{\Gamma}(K)[\{3\}']$ is obtained by removing edges from $\overline{\Gamma}(K_1)$. Similarly, $\overline{\Gamma}(K)[\{5\}']$ is obtained by removing edges from $\overline{\Gamma}(K_2)$. In other words, if an edge $pq$ does not include the vertex $3$ (or resp., $5$), then it is contained in $\overline{\Gamma}(K_1)$ (or resp., $\overline{\Gamma}(K_2)$).  We attempt to assign an orientation to each edge in $\overline{\Gamma}(K)$ that is not $3\text{-}5$ by taking its orientation in $\overharpo{\Gamma}(K_1)$ and/or $\overharpo{\Gamma}(K_2)$. It suffices to show that if $p\text{-}q$ is an edge in $\overline{\Gamma}(K)\setminus\{3\text{-}5\}$, then its orientation in $\overharpo{\Gamma}(K_1)$ and $\overharpo{\Gamma}(K_2)$ coincide. We have the following cases.
\begin{enumerate}
    \item If $p,q\not \in \{2,3,5\}$, the Hall $\{p,q\}$-subgroups of $K_1$ and $K_2$ are both isomorphic to the Hall $\{p,q\}$-subgroups of $N.H$. So the directions of their Frobenius actions naturally coincide
    \item If $p\in \{3,5\}$ and $q\not \in \{3,5\}$, $p\text{-}q$ appears in either $\overline{\Gamma}(K_1)$ or $\overline{\Gamma}(K_2)$, depending on $p$ being $3$ or $5$. So there is no ambiguity in the orientation.
    \item If $p=2$ and $q\not\in \{2,3,5\}$, consider Hall $\{2,q\}$-subgroups $H_1$ of $K_1$ and $H_2$ of $K_2$. By Lemma \ref{Hall respects extension}, $H_1\simeq A.(V_4\times B)$ and $H_2\simeq A.(\Z_2\times B)$, where $A$ is the Hall $\{2,q\}$-subgroup of $N$, $B$ is the Sylow $q$-subgroup of $H$, $V_4$ is a Sylow $2$-subgroup of $A_4$ isomorphic to the Klein $4$ group. Since $A.B$ is a normal subgroup such that the quotient in both $H_1$ and $H_2$ is a $2$-group, the orientation must both be from $2$ to $q$ by Lemma \ref{no q on top}.  

\end{enumerate}
Thereby, we have proved that there is a well-defined orientation for any edge $p\text{-}q$ in $\overline{\Gamma}(K)\setminus \{3\text{-}5\}$ such that it coincides with the orientation in $\overharpo{\Gamma}(K_1)$ and $\overharpo{\Gamma}(K_2)$. Define $\overharpo{\Gamma}(K)$ to be this orientation of $\overline{\Gamma}(K)\setminus \{3\text{-}5\}$. Call $\overharpo{\Gamma}(K)$ the Frobenius Diagraph of $K$.\bigskip

Furthermore, if $p=3$ and $q\not\in \{2,3,5\}$ is an edge in $\overharpo{\Gamma}(K)$, by Lemma \ref{Hall respects extension}, take the Hall $\{3,q\}$ subgroup of $K_2$ isomorphic to $A.(\mathbb{Z}_3)$. Note that here, $|H|$ must not be divisible by $q$, because otherwise there exists an element of order $3q$ in the quotient, and thus in $K_2$, which is a contradiction. Therefore, $A$ is a normal subgroup such that the quotient is a $3$-group, then the orientation must be $3\to q$ by Lemma \ref{no q on top}. Similarly, if $p=5$ and $q\not\in \{2,3,5\}$ is an edge in $\overharpo{\Gamma}(K)$, the orientation must be $5\to q$ by taking the Hall $\{5,q\}$ subgroup of $K_1$ isomorphic to $A.\Z_5$.\bigskip

Similarly, take the Hall $\{2,3\}$-subgroup of $K_2$ isomorphic to $I.A_4=I.(V_4\rtimes  \Z_3)$. We have that if there is an edge $2\text{-}3$, it must be $3\to 2$. Take the Hall $\{2,5\}$-subgroup of $K_1$ isomorphic to $J.D_{10}=J.(\Z_5\rtimes \Z_2)$. We have that if there is an edge $2\text{-}3$, it must be $2\to 5$.\bigskip

In summary, in $\overharpo{\Gamma}(K)$, we must have $p\to q$ for $p\in \{2,3,5\}$ and $q\not\in \{2,3,5\}$, $3\to 2$, and $2\to 5$ if these edges exist. Also, $\overharpo{\Gamma}(K)$ has no directed $3$-path with either $3$ or $5$ removed.\bigskip

Therefore, any directed $3$-path in $\overharpo{\Gamma}(K)$ must contain both $3$ and $5$. Since $3$ and $5$ cannot be connected in $\overharpo{\Gamma}(K)$ and we cannot have any edge into $3$, $3$ must be the start of the $3$-path. Since the only edge into $5$ must be $2\to 5$ and the only edge into $2$ must be $3\to 2$, any $3$-path in $\overharpo{\Gamma}(K)$ must be of the form $3\to 2\to 5 \to p$ for some $p\not \in \{2,3,5\}$. Define a new orientation of $\overline{\Gamma}(K)\setminus \{3\text{-}5\}$ by reverse the direction from $2\to 5$ to $5\to 2$. Since we only reversed the edge $2\text{-}5$ in the new orientation, any $3$-path must contain the edge $2\text{-}5$ or it would also be present in the original orientation, which will not be of the form $3\to 2\to 5 \to p$, which is a contradiction. Since there is no edge coming into $5$, $5$ must be the start of the $3$-path. So the this $3$-path must be of the form $5\to 2\to r\to s$. But this implies that $3\to 2\to r\to s$ is a $3$-path in the original orientation. Contradiction. Therefore, the new orientation has no directed $3$-path. So $\overline{\Gamma}(K)$ is $3$-colorable with $3\text{-}5$ removed.\bigskip

Notice that $\overline{\Gamma}(K)$ is triangle free with either $3$ or $5$ removed. So if there exists any triangle in $\overline{\Gamma}(K)$, it must contain both $3$ and $5$. So $\overline{\Gamma}(K)$ with $3\text{-}5$ removed is triangle free. \bigskip

Thereby, we have proved that $\overline{\Gamma}(G)$ with the edge $3\text{-}5$ removed, if any, is triangle free and $3$-colorable.
\end{proof}

\begin{remark}
It is unclear at this point if the condition can be strengthened. One particular example of consideration is the Groetzsch graph, which satisfies the assumptions of Theorem \ref{weak result on C-groups} but we do not know if it can be the prime graph of a pseudo-solvable group. Note that the Groetzsch graph happens to be triangle free but not $3$-colorable. This draws connection to Maslova's Conjecture that a prime graph cannot be triangle free if it is not $3$-colorable (see \cite[Problem 19.52]{khukhro2014unsolved}).
\end{remark}

We proceed by proving some other properties for prime graphs of pseudo-solvable groups.

\begin{lemmaN}
Let $G$ be pseudo-solvable. If $\{2,3,5\}$ forms a triangle in $\overline{\Gamma}(G)$, then there is exactly one copy of $A_5$ in the composition series. Furthermore, take any normal series of $G$, the quotient by everything up to the first $A_5$ must have order coprime to $30$.
\end{lemmaN}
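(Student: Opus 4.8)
The plan is to locate $A_5$ inside $G$ via the solvable radical $R=\operatorname{Sol}(G)$ and to prove the sharper statement that $G/R\cong A_5$; both assertions of the lemma then drop out. First note that $2,3,5$ all divide $|G|$ (they are vertices of $\overline{\Gamma}(G)$), and that the triangle hypothesis says exactly that $G$ has no element of order $6$, $10$, or $15$. Hence $G$ is non-solvable: otherwise Lucido's Three Primes Lemma applied to $2,3,5$ would yield an element of order $6$, $10$, or $15$. In particular $G$ has a composition factor isomorphic to $A_5$, and $\overline G:=G/R$ is nontrivial with $\operatorname{Sol}(\overline G)=1$. I would then pin down $\overline G$: since $\operatorname{Sol}(\overline G)=1$ we have $F(\overline G)=1$, so $F^{*}(\overline G)=\operatorname{Soc}(\overline G)$ is a direct product of nonabelian simple groups, each of which is a composition factor of $G$ and hence $\cong A_5$; thus $\operatorname{Soc}(\overline G)\cong A_5^{k}$ for some $k\geq 1$. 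If $k\geq 2$, then $A_5^{2}\leq \operatorname{Soc}(\overline G)\leq \overline G$ already contains an element of order $15$ (a $3$-cycle in one coordinate against a $5$-cycle in another); lifting it along $G\to\overline G$ gives an element of order $15$ in $G$, a contradiction. So $k=1$, whence $C_{\overline G}(\operatorname{Soc}(\overline G))\leq F^{*}(\overline G)\cong A_5$ forces $C_{\overline G}(\operatorname{Soc}(\overline G))=Z(A_5)=1$, so $\overline G$ embeds in $\Aut(A_5)=S_5$ with image containing $\Inn(A_5)\cong A_5$; thus $\overline G\cong A_5$ or $S_5$. Since $S_5$ has an element of order $6$ (a transposition times a disjoint $3$-cycle), which would lift to one in $G$, we get $\overline G\cong A_5$. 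By Jordan--Hölder, the composition factors of $G$ are those of the solvable group $R$ (all cyclic) together with the single factor $A_5$ from $G/R$, proving the first assertion.

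For the second assertion, take any normal series $1=G_0\nor G_1\nor\cdots\nor G_n=G$ and let $t$ be the least index with $G_t$ non-solvable; the first appearance of non-solvability — equivalently of the $A_5$ composition factor — is then in passing from $G_{t-1}$ to $G_t$, so $G_t$ is ``everything up to the first $A_5$'' and I must show $|G/G_t|$ is coprime to $30$. Since $G_{t-1}\nor G$ is solvable it lies in $R$, while $G_t$ does not; as $G/R\cong A_5$ is simple and $G_tR/R\nor G/R$, the image $G_tR/R$ must be all of $G/R$, i.e.\ $RG_t=G$. Put $M=R\cap G_t\nor G$. The isomorphism theorems give $G_t/M\cong G/R\cong A_5$ and $R/M\cong G/G_t$; since $R/M$ and $G_t/M$ are normal in $G/M$ with trivial intersection and product $G/M$, we obtain $G/M\cong (G/G_t)\times A_5$, with $G/G_t$ solvable. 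Now if some $p\in\{2,3,5\}$ divided $|G/G_t|$, Cauchy's theorem would give an element of order $p$ in the $G/G_t$-factor; pairing it with an element of the $A_5$-factor of order $5$ (or of order $3$, when $p=5$) produces an element of order $10$ or $15$ in $G/M$, hence one in $G$ after lifting and passing to a suitable power, contradicting the triangle hypothesis. Therefore $|G/G_t|$ is coprime to $30$.

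The argument is short; the genuinely delicate inputs are the standard structure theory of the generalized Fitting subgroup (used to identify $\operatorname{Soc}(\overline G)\cong A_5^{k}$ and to conclude $C_{\overline G}(\operatorname{Soc}(\overline G))=1$, hence $\overline G\hookrightarrow S_5$), and the repeated, routine-but-load-bearing observation that an element of order $n$ in a quotient of $G$ lifts to an element of $G$ of order divisible by $n$, hence — after taking an appropriate power — to one of order exactly $n$. The small maneuver $G/M\cong (G/G_t)\times A_5$ is what converts the hypothesis ``no element of order $6$, $10$, or $15$'' into the divisibility restriction on $|G/G_t|$, and I expect keeping track of this decomposition (and the case distinction on $p$) to be the only place where care is needed rather than insight.
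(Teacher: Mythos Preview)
Your proof is correct and takes a genuinely different route from the paper's. You work with the solvable radical $R=\operatorname{Sol}(G)$ and prove the sharper fact $G/R\cong A_5$ via generalized Fitting structure theory: from $F(\overline G)=1$ you get $F^{*}(\overline G)=\operatorname{Soc}(\overline G)\cong A_5^{k}$, the order-$15$ observation forces $k=1$, and $C_{\overline G}(F^{*}(\overline G))\leq Z(F^{*}(\overline G))=1$ embeds $\overline G$ into $S_5$, with the element of order $6$ in $S_5$ ruling out $\overline G\cong S_5$. For the second assertion your direct-product decomposition $G/M\cong (G/G_t)\times A_5$ converts the triangle hypothesis directly into the coprimality claim. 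The paper instead passes to a chief series, quotients below the first nonabelian chief factor to get a group of shape $A_5.K$, and then splits into cases according to whether the conjugation map into $\Aut(A_5)=S_5$ is onto: one case invokes the paper's earlier technical lemma that an element $h\notin N$ with $\phi_h\in\Inn(N)$ forces elements of order $pq$, and the other computes the centralizer of $A_5$ and handles the prime $2$ separately via the classification of extensions $A_5.\Z_2$. Your argument avoids both that lemma and the case split; the paper's argument stays more elementary in that it never invokes $F^{*}$, at the cost of those auxiliary steps. Both proofs share the ``$k\geq 2\Rightarrow$ element of order $15$'' step and the lifting of element orders from quotients.
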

\begin{proof}
Take the normal series of $G$ where each factor is a minimal normal subgroup of the quotient. We can quotient out everything below the first $A_5$, so we can assume that we have $A_5^k$ normal in $G$. Since $\overline{\Gamma}(G)$ is not $3$-colorable and triangle free, $\{2,3,5\}$ must form a triangle in $\overline{\Gamma}(G)$. If $k\geq 2$, $A_5^2$ contains an element of order $15$. So $3\text{-}5$ is not an edge in $\overline{\Gamma}(G)$. Contradiction. Therefore, we can assume that $A_5$ normal in $G$. Let $G= A_5 . K$. It suffices to show that $|K|$ is coprime to $30$.\bigskip

Since $G$ acts on $A_5$ by conjugation, we have the homomorphism $\phi: G\to \Aut(A_5)=S_5$. The image of $\phi$ at least contains $\Inn(A_5)=A_5$. If the image of $\phi$ equals $\Inn(A_5)$, we conclude that there is an element of order 15 so we are done by Lemma \ref{Semidirect second lemma}.\bigskip

Otherwise, the image of $\phi$ must equal $S_5$. Let $C$ be the centralizer of $A_5$ in $G$, which is the kernel of $\phi$. So $G/C=S_5$, which indicates that $|C|=|G|/120=|K|/2$. For any $p\not =2$, if $p$ divides $|K|$, $p$ divides $|C|$. Take any element $c\in C$ of order $p$ and let $e$ be an involution in $A_5$. We have that $ce$ has order $2p$, since they commute. So there must be an edge $2\text{-}p$ in $\overline{\Gamma}(G)$. This contradicts with $\{2,3,5\}$ forming a triangle in $\overline{\Gamma}(G)$. Therefore, it suffices to show that $|K|$ is odd. (Similarly, we could also take $e\in A_5$ to have order $3$ or $5$. This would give us that there cannot be edges $2\text{-}p$, $3\text{-}p$, and $5\text{-}p$ in $\overline{\Gamma}(G)$.)\bigskip

If $|K|$ is even, then $K$ has a subgroup isomorphic to $\Z_2$. Take the preimage of this subgroup under the projection $G\to G/A_5$ to obtain a subgroup of $G$ isomorphic to $A_5. \Z_2$. It is well-known that $A_5. \Z_2$ must either be $S_5$ or $A_5\times \Z_2$. Since both contains an element of order $6$, $\{2,3,5\}$ cannot form a triangle in $\overline{\Gamma}(G)$. Contradiction.
\end{proof}

\begin{theoremN}\label{second lemma on C-groups}
Let $G$ be pseudo-solvable. One of the following holds.
\begin{enumerate}
    \item $\{2,3,5\}$ do not form a triangle in $\overline{\Gamma}(G)$.
    \item $2\text{-}p$ is not an edge in $\overline{\Gamma}(G)$ for any $p\not\in \{2,3,5\}$.
\end{enumerate}
\end{theoremN}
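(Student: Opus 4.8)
The plan is to prove the contrapositive: assuming $\{2,3,5\}$ forms a triangle in $\overline{\Gamma}(G)$ — i.e.\ $G$ has no element of order $6$, $10$, or $15$ — I will show that $G$ has an element of order $2p$ for every prime $p\notin\{2,3,5\}$ dividing $|G|$. I would take $G$ to be a counterexample of least order and derive a contradiction.

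First I reduce the global structure. Since $2,3,5\in\pi(G)$ but $G$ has no element of order $6,10,15$, Lucido's Three Primes Lemma shows $G$ is non-solvable, so by the preceding lemma $A_5$ occurs exactly once among the composition factors of $G$. Writing $R$ for the solvable radical, every minimal normal subgroup of $G/R$ is non-abelian and its only non-abelian composition factor is $A_5$ which occurs once, so the socle of $G/R$ is a single copy of $A_5$, hence $C_{G/R}(A_5)=1$ and $G/R\cong A_5$ or $S_5$; the latter is excluded because $S_5$ has an element of order $6$, which would lift to one in $G$. So $G/R\cong A_5$, and since $p\nmid 60$ we get $p\mid|R|$. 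Let $P$ be a Sylow $p$-subgroup of $R$; as $p\nmid[G:R]$ it is a Sylow $p$-subgroup of $G$. By the Frattini argument $N_G(P)$ maps onto $A_5$, so $\{2,3,5,p\}\subseteq\pi(N_G(P))$ and $N_G(P)$ still has no element of order $6,10,15$; minimality of $G$ then forces $N_G(P)=G$, i.e.\ $P\trianglelefteq G$. Likewise $C_G(P)\le R$: otherwise $C_G(P)R=G$, $C_G(P)$ maps onto $A_5$, and an involution of $C_G(P)$ commutes with an element of order $p$ in $P$, giving an element of order $2p$.

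Next comes the $2$-structure, where the absence of an element of order $2p$ is very rigid. If a $2$-element $t$ of $G$ had a nontrivial fixed point on $P$, then its involution power would centralise a nontrivial element of $P$ and again produce an element of order $2p$; so every $2$-element of $G$ acts fixed-point-freely on $P$, whence every $2$-subgroup of $G$ is a Frobenius complement and so is cyclic or generalized quaternion. A Sylow $2$-subgroup $S$ of $G$ surjects onto a Sylow $2$-subgroup $V_4$ of $G/R\cong A_5$, so $S$ is not cyclic; thus $S\cong Q_{2^n}$ with $n\ge 3$, and $S\cap R$ — a Sylow $2$-subgroup of $R$ — must be the commutator subgroup of $S$, which is cyclic of order $2^{\,n-2}$. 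Hence $R$ has cyclic Sylow $2$-subgroups, and Burnside's normal $p$-complement theorem gives a normal $2$-complement $U=O_{2'}(R)=O_{2'}(G)$: a nontrivial normal subgroup of odd order with $p\mid|U|$.

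Finally I pass to $\overline{G}=G/U$: here $O_2(\overline{G})=R/U\cong\Z_{2^{\,n-2}}$ with $\overline{G}/O_2(\overline{G})\cong A_5$, and since the automorphism group of a cyclic $2$-group is a $2$-group, $O^2(\overline{G})$ centralises $O_2(\overline{G})$; therefore either $O^2(\overline{G})$ is a central extension of $A_5$ by a nontrivial cyclic $2$-group, or $\overline{G}\cong A_5\times\Z_{2^{\,n-2}}$. In both cases (using $n\ge 3$, so $O_2(\overline{G})\ne 1$) $\overline{G}$ has an element of order $6$ — a central involution times an element of order $3$ — and this lifts to an element of order $6$ in $G$, contradicting the triangle hypothesis. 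I expect this last paragraph, the generalized quaternion case, to be the main obstacle: it is where Burnside's normal complement theorem and a small amount of care with central extensions of $A_5$ are really needed, whereas the earlier reductions (non-solvability, $G/R\cong A_5$, normality of $P$, and $C_G(P)\le R$) become routine once one works inside a minimal counterexample.
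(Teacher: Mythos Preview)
Your argument is correct and takes a genuinely different route from the paper's.

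The paper reduces (via the preceding lemma) to a group of shape $N.A_5$ with $p\mid|N|$, then passes to a Hall $\{2,p\}$-subgroup~$H$ of $N.A_5$ (which exists because it is already a Hall subgroup of the solvable $N.A_4$). This $H$ is Frobenius or $2$-Frobenius with a $V_4$ on top, and the unique-involution property of Frobenius complements forces a $\Z_2$ chief factor to appear in $N.A_5$; Lemma~\ref{Semidirect second lemma} then produces an element of order $6$ or $10$. Your approach instead works inside a minimal counterexample: Frattini plus minimality make $P\trianglelefteq G$; the missing $2p$-element forces every $2$-subgroup to act fixed-point-freely on $P$, hence the Sylow $2$-subgroup is generalized quaternion $Q_{2^n}$ (the surjection onto $V_4$ rules out cyclic, and this simultaneously disposes of the case $|S|=4$); Burnside's normal $p$-complement theorem then gives a normal $2$-complement in $R$, and the fact that $\Aut(\Z_{2^{n-2}})$ is a $2$-group produces a central involution in $G/O_{2'}(G)$ commuting with an element of order $3$.

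Both arguments ultimately manufacture an involution that commutes with a $3$-element, but they locate it differently. The paper's route stays within the Hall-subgroup/Frobenius framework set up earlier in Section~6 and leans on the bespoke Lemma~\ref{Semidirect second lemma}. Your route is more structural and self-contained, using only classical facts (Frobenius complements are cyclic or quaternion, Burnside transfer, automorphisms of cyclic $2$-groups); as a bonus it pins down the Sylow $2$-subgroup of any hypothetical counterexample precisely. One small point worth making explicit in a write-up: after concluding $S\cong Q_{2^n}$, the identification $S\cap R=[S,S]$ follows because both are index-$4$ subgroups and $S/(S\cap R)\cong V_4$ is abelian; you use this implicitly when asserting $S\cap R$ is cyclic of order $2^{\,n-2}$.
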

\begin{proof}
Assume that there is an edge $2\text{-}p$ in $\overline{\Gamma}(G)$ for $p\not\in {2,3,5}$ and that $\{2,3,5\}$ forms a triangle. By the above lemma, we have a normal series of $G$ of the form $N.A_5.H$, where $N$ and $H$ solvable and $|H|$ coprime to $30$.\bigskip

By the comment at the end of the penultimate paragraph of the proof of the above lemma, we also have that $p$ does not divide $|H|$ if $2\text{-}p$ is an edge in the complement prime graph. Therefore, $p$ divides $|N|$. So $\Gamma(N.A_5)$ is a subgraph of $\Gamma(G)$ containing vertices $\{2,3,5,p\}$. Therefore, $2\text{-}p$ is an edge and $\{2,3,5\}$ forms a triangle in $\Gamma(N.A_5)$. So far, we have reduced the problem to the case where $H=1$.\bigskip

Note that $N.A_5$, though not solvable, does have a Hall $\{2,p\}$-subgroup because $N.A_4$ has a Hall $\{2,p\}$ subgroup for $p\not\in \{2,3,5\}$ and $[N.A_5:N.A_4]=5$ is coprime to $2p$. By Lemma \ref{Hall respects extension}, we can form a normal series of a Hall $\{2,p\}$ subgroup $H$ of $N.A_5$ such that each of its factor is isomorphic to a Hall $\{2,p\}$-subgroup of each factor in the normal series of $N.A_5$. Therefore, the factor on the top of this normal series of $H$ would be isomorphic to $V_4$, a Hall $\{2,p\}$ subgroup of $A_5$, and each factor below is either an elementary abelian $2$-group or an elementary abelian $p$-group. By \cite[Theorem A]{WILLIAMS1981487}, $H$ is Frobenius or $2$-Frobenius. By Lemma \ref{no q on top}, since we have $V_4$ on the top of a normal series, $H$ must be Frobenius of type $(2,p)$ or $2$-Frobenius of type $(2,p,2)$. Either way, the quotient of $H$ by the Sylow $2$-subgroup of $F(H)$ is Frobenius of type $(2,p)$ and has $V_4$ on the top of its normal series.\bigskip

By \cite[Theorem 6.3]{isaacs2008finite}, a Frobenius kernel of even order contains a unique element of order $2$. Therefore, a minimal normal subgroup of the Frobenius complement must be $\Z_2$ instead of some elementary abelian $2$-group of higher order. Therefore, $\Z_2$ appears as a factor in the normal series of $H$. Since we constructed this normal series of $H$ such that each of its factor is isomorphic to a Hall $\{2,p\}$-subgroup of each factor in the normal series of $N.A_5$ and all factors below $A_5$ are elementary abelian groups, we have that there is a $\Z_2$ as a factor in the normal series of $N.A_5$. Therefore, $N.A_5$ has a quotient $T\simeq \Z_2.S.A_5$. Consider the group action of $T$ on $\Z_2$ given by $\phi: T \to \Aut(\Z_2)\simeq \Z_2$. Now we apply Lemma \ref{Semidirect second lemma}. Since there is an element in $T \setminus \Z_2$ of order $3$ and $3$ doesn’t divide $|\Aut(\Z_2)|=2$, there is an element in $T$ of order $6$. Similarly, $T$ contains an element of order $10$. So $H$, and furthermore, $G$, contains an element of order $6$ and an element of order $10$. This contradicts with our assumption that $\{2,3,5\}$ form a triangle in $\overline{\Gamma}(G)$.
\end{proof}

\section{Dual Prime Graphs}
We start by introducing a more general set-up.
\begin{definition}
Let $X$ be a set of integers.
\begin{enumerate}
    \item The vertex set of $\Delta(X)$, the prime vertex graph of $X$, is the set of all prime numbers dividing some element of $X$. $pq$ is an edge if and only if $pq$ divides some element of $X$.
    \item The vertex set of $\Gamma(X)$, the common divisor graph, is $X^*=X\setminus \{1\}$. $ab$ is an edge if and only if $a$ and $b$ have nontrivial gcd. We remove $1$ from the graph because it is always an isolated point.
\end{enumerate}
\end{definition}
We can see that $\Gamma(X)$ and $\Delta(X)$ are in some sense dual to each other. For a finite group $G$, there are multiple ways of associating $G$ with a set of integers $X$. We can take $X$ as $\eo(G)$ the set of all element orders in $G$, as $\cd(G)$ the set of all character degrees of $G$, or as $\cs(G)$ the set of all conjugacy class sizes of $G$. Notice that when we take $X=\eo(G)$, $\Delta(\eo(G))$ is the prime graph of $G$ defined earlier, and $\Gamma(\eo(G))$ is its dual. The notation is a little unfortunate, since $\Gamma(G)$ was used to denote the prime graph of $G$, but these are the standard notations in the literature. We will try to make this section less confusing by only using $\Delta(\eo(G))$ (or simply $\Delta$) and $\Gamma(\eo(G))$ (or simply $\Gamma$) to denote the prime graph and the dual prime graph of $G$. \bigskip

$\Delta (\eo(G)), \Delta(\cd(G)), \Delta(\cs(G)), \Gamma(\cd(G))$, and $\Gamma(\cs(G))$ have all been studied, but seemingly not $\Gamma(\eo(G))$. Although it is unclear whether there are connections among $\eo(G)$, $\cd(G)$, and $\cs(G)$, their graphs share similar-looking results. This raised our interests in studying $\Gamma(\eo(G))$. We devote this section to constructing an algorithm that given a graph $F$ isomorphic to the dual prime graph $\Gamma(\eo(G))$ of some group $G$, recovers $\Delta(\eo(G'))$ for any group $G'$ such that $\Gamma(\eo(G'))$ is isomorphic to $F$. Furthermore, the algorithm has a high frequency to reject a graph that is not isomorphic to the dual prime graph of any group. In fact, we give this algorithm in a more general setting of $\Delta(X)$ and $\Gamma(X)$, where $X$ is a set of integers closed under taking divisors.\bigskip

In this section, $\pi(m)$ denotes the set of distinct prime divisors of a natural number $m$ and $\Pi(m)$ denotes the product of these primes in $\pi(m)$. $N(v)$ denotes the neighborhood of $v$, which consists of $v$ and all vertices adjacent to $v$.

\begin{lemmaN}\label{Contract}
A subgraph $S$ of $\Gamma(X)$ is a complete graph and $N(m)$ is identical for all $m\in S$ if and only if $\pi(m)$ is identical for all $m\in S$. Furthermore, if $S$ is a maximal such subgraph, then  $S=\{m\in X : \pi(m)=\pi(n) \}$ for each $n\in S$.
\end{lemmaN}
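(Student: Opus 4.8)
The plan is to prove the biconditional in the natural direction and then extract the maximality statement as a corollary of the argument. The key observation is that in $\Gamma(X)$, two vertices $m,n$ are adjacent precisely when $\pi(m) \cap \pi(n) \neq \emptyset$, and that for any vertex $k \in X^*$, membership $k \in N(m)$ is equivalent to $k = m$ or $\pi(k) \cap \pi(m) \neq \emptyset$. So the condition ``$N(m)$ is identical for all $m \in S$'' is a statement purely about which primes each $m$ shares with the ambient set $X$.

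First I would prove the easy implication: if $\pi(m)$ is identical for all $m \in S$, say equal to some fixed set $\tau$, then for $m, n \in S$ we have $\pi(m) \cap \pi(n) = \tau \neq \emptyset$ (nonempty since $m \neq 1$), so $S$ is complete; and for any $k \in X^*$, $k$ is adjacent to $m$ iff $\pi(k) \cap \tau \neq \emptyset$, a condition not depending on which $m \in S$ we chose, so $N(m)$ is the same for all $m \in S$ (one must also check $m, n \in N(m) \cap N(n)$, which holds since $m$-$n$ is an edge).

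For the converse, suppose $S$ is complete and $N := N(m)$ is the same for all $m \in S$. Completeness gives that any two $m, n \in S$ share a prime. The goal is to upgrade ``pairwise sharing a prime'' to ``all having the same prime set.'' Here I would use the identical-neighborhood hypothesis: fix $m, n \in S$ and a prime $p \mid m$; I want $p \mid n$. Since $X$ is closed under taking divisors, the integer $q := p^{v_p(m)}$ — or more simply just $p$ itself, which divides $m \in X$ — lies in $X$, hence in $X^*$, and $p \in N(m)$ because $p \mid \gcd(p, m)$. Then $p \in N = N(n)$, so either $p = n$ (whence $p \mid n$) or $\pi(p) \cap \pi(n) \neq \emptyset$, which forces $p \mid n$. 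Thus $\pi(m) \subseteq \pi(n)$, and by symmetry $\pi(m) = \pi(n)$, for all $m, n \in S$. The main obstacle is recognizing that divisor-closure of $X$ is exactly what lets us test each prime individually by inserting the prime itself (or a prime power) as a probe vertex in $\Gamma(X)$; without that closure the converse can fail, so the hypothesis must be invoked at precisely this point.

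For the furthermore clause: given the equivalence, a maximal complete subgraph with common neighborhood corresponds to a maximal set $S$ on which $\pi(\cdot)$ is constant, equal to some value $\tau$. If $S \subsetneq S' := \{k \in X : \pi(k) = \tau\}$, then $S'$ still satisfies ``$\pi$ constant,'' hence by the forward implication $S'$ is complete with common neighborhood, contradicting maximality of $S$; conversely $S \subseteq S'$ always holds since every $n \in S$ has $\pi(n) = \tau$ by the equivalence. Hence $S = S' = \{k \in X : \pi(k) = \tau\}$, and since $\tau = \pi(n)$ for every $n \in S$, this is the claimed description. I expect the whole argument to be short; the only subtlety worth stating carefully is that $1$ is excluded from the vertex set, so every $m \in S$ has $\pi(m) \neq \emptyset$, which is what makes $S$ complete rather than edgeless.
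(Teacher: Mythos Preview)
Your proof is correct and rests on the same key observation as the paper's: a prime $p$ dividing some $m\in S$ is itself a vertex of $\Gamma(X)$ (by divisor-closure of $X$), lies in $N(m)$, hence in $N(n)$, and therefore divides $n$. The paper packages the converse via the gcd $L=\gcd{m:m\in S}$ and a contradiction, whereas you argue directly by symmetric containment $\pi(m)\subseteq\pi(n)\subseteq\pi(m)$; this is a cosmetic difference, and your explicit flagging of where divisor-closure is needed is a welcome clarification over the paper's implicit use of it.
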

\begin{proof}
($\Leftarrow$) For any $m,n\in S$, $m$ and $n$ share the same set of prime divisors. Therefore, $m$ and $n$ are adjacent in $\Gamma(X)$ and $N(m)=N(n)$. \bigskip

($\Rightarrow$) Conversely, assume that we have a subgraph $S$ of $\Gamma(X)$ such that $S$ is a complete graph and $N(m)$ is the same for all $m\in S$. Let $L=\gcd{m:m \in S}$. If there exists $m\in S$ such that $m=Lm'$ with $\pi(m')\not\subset \pi(L)$, take a prime $p\in \pi(m')- \pi(L)$. $p\not\in L$ implies that $\gcd{p,L}=1$. There exists $n\in S$ such that $Lp\nmid n$ because $L$ is the gcd. Now that $L\mid n$, $\gcd{p,L}=1$, and $p\nmid n$, so $p\nmid n$. Then $p\in N(m)$ but $p\not\in N(n)$. This is a contradiction to $N(m)=N(n)$. Therefore, for all $m
\in S$, $\pi(m')\subset \pi(L) $, which implies $\pi(m)=\pi(L)$. So $\pi(m)$ is the same for all $m\in S$.\bigskip

For the second part, let $S$ be a maximal such subgraph. $S\subset\{m\in \eo(G) : \pi(m)=\pi(n) \}$ for each $n\in S$ because $\pi(m)$ is the same for all $m\in S$. Also, $\{m\in \eo(G) : \pi(m)=\pi(n) \}$ is itself such a subgraph for each $n\in S$. So $S=\{m\in \eo(G) : \pi(m)=\pi(n) \}$ for each $n\in S$.
\end{proof}
\begin{remark}
By the above lemma, we can identify all the non-square free order vertices in $\Gamma(X)$ and group them with their square-free order divisors. We replace the grouped vertices with a single vertex labeled by the square-free order divisor and call the resulting graph the contracted common divisor graph. Alternatively, we can view the contracted common divisor graph as the induced subgraph of the common divisor graph on the set of all square-free vertices.\bigskip

Now, we try to recover the prime vertex graph from the contracted common divisor graph.
\end{remark}
\begin{lemmaN}
Since all vertices are square-free in a contracted common divisor graph, no two vertices share the same neighborhood. Given two vertices $v$ and $u$, $v\mid u$ if and only if $N(v)\subset N(u)$. 
\end{lemmaN}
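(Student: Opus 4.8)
The plan is to prove the contrapositive and the divisibility criterion separately, both relying on the fact that a vertex $v$ in a contracted common divisor graph is a square-free integer and hence is determined by $\pi(v)$, which in this setting we may identify with $v$ itself. First I would establish the easy direction of the second claim: if $v \mid u$, then every integer $w$ with $\gcd\{v,w\}>1$ also satisfies $\gcd\{u,w\}>1$, since any common prime divisor of $v$ and $w$ divides $u$; moreover $v$ itself is adjacent to $u$ (as $v\mid u$ and $v\neq 1$), so $v \in N(u)$, giving $N(v)\subseteq N(u)$. This uses only that $X$ is closed under divisors insofar as it guarantees the relevant vertices exist, but actually here we just need the neighborhoods as defined.

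Next I would prove the converse: assume $N(v)\subseteq N(u)$ and show $v\mid u$. Since $v$ is square-free, it suffices to show $\pi(v)\subseteq \pi(u)$. Suppose not, and pick a prime $p \in \pi(v)\setminus\pi(u)$. The natural move is to produce a vertex $w$ witnessing $p$'s membership in $N(v)$ but not $N(u)$: indeed $p$ itself is a vertex of $\Gamma(X)$ (it divides $v\in X$, and $X$ is closed under divisors, so $p\in X$), and $p\in N(v)$ since $p\mid v$. But $p\notin N(u)$: the only way $p$ could be adjacent to $u$ is $\gcd\{p,u\}>1$, i.e. $p\mid u$, contradicting $p\notin\pi(u)$. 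This contradicts $N(v)\subseteq N(u)$. Hence $\pi(v)\subseteq\pi(u)$, and since $v$ is square-free, $v\mid u$.

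Finally, the statement that no two vertices share the same neighborhood follows immediately: if $N(v)=N(u)$ then $N(v)\subseteq N(u)$ and $N(u)\subseteq N(v)$, so $v\mid u$ and $u\mid v$ by the criterion just proved, whence $v=u$ (both being positive square-free integers). Alternatively this is a direct consequence of Lemma \ref{Contract}: the contracted graph is by construction the induced subgraph on one representative per $\pi$-class, and within a $\pi$-class all elements were identified, so distinct surviving vertices have distinct prime-divisor sets and therefore—again by the argument above—distinct neighborhoods.

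I expect the only subtle point is the clean use of closure of $X$ under divisors to guarantee that the witness vertex $p$ actually lies in $\Gamma(X)$; without that hypothesis one would need to exhibit a composite witness divisible by $p$ but coprime to $u$, which may not exist. Since the lemma is stated in the running context where $X$ is assumed closed under taking divisors, this causes no difficulty, and the rest is a routine translation between divisibility of square-free integers and containment of prime-divisor sets.
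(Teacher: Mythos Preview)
Your proof is correct and follows essentially the same approach as the paper's: both directions hinge on producing a prime $p \in \pi(v) \setminus \pi(u)$ as a witness vertex (using that $X$ is closed under divisors), and the forward direction is the elementary observation that $v \mid u$ forces $\gcd\{v,w\} \mid \gcd\{u,w\}$. Your write-up is in fact more complete than the paper's, which does not explicitly address the ``no two vertices share a neighborhood'' claim or the role of closure under divisors in guaranteeing that $p$ is a vertex.
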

\begin{proof}
($\Rightarrow$) If $v\mid u$, $\gcd{v,w}\mid \gcd{u,w}$ for any $w$. So $N(v)\subset N(u)$.\bigskip

($\Leftarrow$) If $v\nmid u$, let $p$ prime such that $p\mid v$ but $p\nmid u$. Then $p\in N(v) $ but $p\not \in N(u)$. Contradict to $N(v)\subset N(u)$.
\end{proof}
\begin{corollaryN}\label{posetisom}
In a contracted common divisor graph, the poset of the vertices ordered by inclusion of neighborhood is isomorphic to the poset of their values ordered by divisibility. Thus, the poset is graded by the number of prime divisors.
\end{corollaryN}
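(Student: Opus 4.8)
The plan is to read both assertions off the lemma immediately preceding this corollary, together with the standing hypothesis that $X$ is closed under taking divisors.

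For the poset isomorphism, identify each vertex of the contracted common divisor graph with its value, so that the vertex set $V$ is exactly the set of square-free members of $X$. Put two partial orders on $V$: write $v\preceq u$ when $N(v)\subseteq N(u)$, and write $v\mid u$ for ordinary divisibility. The preceding lemma says precisely that (i) distinct vertices have distinct neighborhoods and (ii) $v\mid u \iff N(v)\subseteq N(u)$ (reading the containment there as non-strict; the case $v=u$ is trivial, and for $v\neq u$ it is then automatically strict by (i)). Hence the identity map $V\to V$ is a bijection that is simultaneously order-preserving and order-reflecting from $(V,\preceq)$ to $(V,\mid)$, i.e. an isomorphism of posets. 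That is the first sentence.

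For the grading, note that a divisor of a square-free integer is square-free, so, since $X$ is divisor-closed, $V$ is itself closed under taking divisors inside the square-free integers; equivalently, via $v\mapsto\pi(v)$ it corresponds to a downward-closed family of finite sets of primes under inclusion. In such a poset, if $u$ covers $v$ then $|\pi(u)|=|\pi(v)|+1$: we have $v\mid u$ with $v\neq u$, and writing $S=\pi(u)\setminus\pi(v)\neq\emptyset$ we get $u=v\prod_{p\in S}p$; if $|S|\geq 2$, then for any $p\in S$ the integer $vp$ is square-free, divides $u$, hence lies in $V$, and satisfies $v\mid vp\mid u$ with $v\neq vp$ and $vp\neq u$, contradicting that $u$ covers $v$. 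Thus $|S|=1$, so $v\mapsto|\pi(v)|$ increases by exactly $1$ at each covering; therefore any two maximal chains with the same endpoints have equal length, and (when $1\in X$, in particular for $X=\eo(G)$) the rank of $v$ is exactly its number of distinct prime divisors. That is the second sentence.

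Almost none of this is an obstacle; the one point that genuinely needs care is the grading claim, since a sub-poset of a graded poset need not be graded. The argument goes through only because $V$, being the square-free part of a divisor-closed set, is itself downward closed, which is exactly what makes the auxiliary element $vp$ available at each covering step.
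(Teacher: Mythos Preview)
Your proof is correct and matches the paper's approach: the paper states this corollary with no proof, leaving both assertions as immediate consequences of the preceding lemma, which is exactly how you derive the poset isomorphism. Your treatment of the grading claim is in fact more careful than the paper's, correctly isolating that downward-closure of $V$ (inherited from divisor-closure of $X$) is what forces $|\pi(\cdot)|$ to increase by exactly one along each covering relation; the paper leaves this implicit.
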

\begin{theoremN}\label{the algorithm}
There exists an algorithm that does the following. Given a graph that is isomorphic to the common divisor graph $\Gamma(X)$ of some set $X$ that is closed under taking divisors, it recovers a graph isomorphic to the prime vertex graph $\Delta(X')$ of $X'$ for any $\Gamma(X')=\Gamma(X)$ and $X'$ closed under taking divisors. Furthermore, if the input graph is not isomorphic to the common divisor graph of any $X$ that is closed under taking divisors, it is very likely that our algorithm rejects it.
\end{theoremN}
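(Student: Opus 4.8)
The plan is to run the algorithm in two phases, separated by a battery of structural checks whose failure triggers rejection. The first phase compresses the input down to a \emph{contracted common divisor graph} using Lemma~\ref{Contract}; the second reads the prime vertex graph $\Delta$ off the neighborhood‑inclusion poset of that contracted graph using Corollary~\ref{posetisom}. The guiding principle is that every operation below is a function of the input \emph{up to isomorphism only}: since the output is therefore determined by the isomorphism type of the input, it will suffice to check that, applied to an honest $\Gamma(X')$, the procedure returns $\Delta(X')$, and independence of the choice of $X'$ comes for free.

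For the first phase I would partition $V(F)$ into the classes of the relation $u\sim v\iff N(u)=N(v)$ (where $N(\cdot)$ is the closed neighborhood), first verifying that each class is a clique — automatic when $F=\Gamma(X)$ since $u\in N(u)=N(v)$ — and then collapsing each class to a single vertex, keeping an edge between two classes exactly when their members were adjacent. Call the result $F'$. By construction distinct vertices of $F'$ have distinct closed neighborhoods, and Lemma~\ref{Contract} identifies $F'$, when $F=\Gamma(X)$ with $X$ divisor‑closed, with the contracted common divisor graph of $X$, i.e.\ the induced subgraph of $\Gamma(X)$ on its square‑free elements.

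In the second phase I would define the poset $P=(V(F'),\preceq)$ by $u\preceq v\iff N(u)\subseteq N(v)$; it is a partial order because, after contraction, distinct vertices have distinct neighborhoods. Here I would run the main consistency checks: $P$ is the poset of nonempty faces of an abstract simplicial complex on its set of atoms — concretely, $P$ carries a rank function under which every cover relation has height one, each $v$ lies above exactly $\operatorname{rank}(v)$ atoms and is the unique element above exactly that set of atoms, and the collection of atom‑sets $\{\,\mathrm{atoms}\preceq v:v\in P\,\}$ is closed under passing to subsets. If any check fails, reject. Otherwise Corollary~\ref{posetisom} (together with the lemma preceding it) tells us that for $F=\Gamma(X)$ the poset $P$ is the divisibility poset of the square‑free elements of $X$, so the atoms of $P$ correspond to the primes in $\pi(X)$. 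I would then output the graph on the atoms of $P$ with $a,b$ adjacent iff they have a common upper bound in $P$ (equivalently, a common upper bound of rank $2$). Correctness is then a short chase: if $F\cong\Gamma(X')$, two atoms $p,q$ have a common upper bound in $P$ iff $pq$ divides some square‑free element of $X'$, iff $pq$ divides some element of $X'$ (as $X'$ is closed under divisors), iff $pq\in E(\Delta(X'))$; since the whole construction was isomorphism‑invariant this holds for \emph{every} admissible $X'$, which in particular forces $\Delta(X)\cong\Delta(X')$ whenever $\Gamma(X)\cong\Gamma(X')$.

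The step I expect to be the real obstacle is the rejection clause. A graph surviving all the checks above is, by every certificate we test, indistinguishable from a genuine $\Gamma(X)$: its contraction is clean and the neighborhood poset of the contraction is exactly a simplicial face poset, which is the shape forced by divisor‑closedness. What is genuinely lost is the multiplicity information — how many elements of $X$ share a prescribed prime support — but this is irrelevant to $\Delta(X)$ and so does not affect the output. A clean combinatorial characterization of precisely which graphs arise as $\Gamma(X)$ for divisor‑closed $X$ appears out of reach, so rather than aim for a complete recognizer I would settle for the list of necessary conditions above, argue that the graphs passing them while arising from no such $X$ form a thin and atypical family, and phrase the soundness guarantee as the ``very likely'' rejection in the statement.
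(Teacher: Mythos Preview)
Your proposal is correct and follows essentially the same route as the paper: contract via Lemma~\ref{Contract}, pass to the neighborhood-inclusion poset via Corollary~\ref{posetisom}, read off the primes as atoms, and declare $p\text{-}q$ an edge of $\Delta$ when a rank-$2$ upper bound exists. The only cosmetic difference is in the consistency checks---the paper labels vertices by products of the atoms beneath them and then checks divisor-closure and the adjacency/gcd condition directly, whereas you phrase the same constraints as ``$P$ is a simplicial face poset''---and your explicit remark on isomorphism-invariance handles the ``for any $X'$'' quantifier more cleanly than the paper does.
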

\begin{proof}
We will give the algorithm as follows and explains why it works as desired.\bigskip
\begin{enumerate}
    \item By Lemma \ref{Contract}, identify the non-square-free terms and form the contracted common divisor graph.
    
    \item Calculate the poset of the vertices ordered by inclusion of neighborhoods. By Corollary \ref{posetisom}, this poset must be isomorphic to the poset of their values ordered by divisibility. Therefore, the set of the minimal vertices is exactly the vertices of the set of all the primes dividing some element in $X$. Label them $p_1,p_2,...,p_k$.
    
    \item Given any vertex $v$ that is not minimal. $\{p_i: i\in I\}$ is the set of all vertices among $p_1,p_2,...,p_k$ that are smaller than $v$ ordered by inclusion of neighborhoods if and only if $v$ is the product of $\{p_i: i\in I\}$. So we have determined the value of each vertex in the contracted dual prime graph. (We do not know what primes are those $p_i$'s, but this does not affect the prime vertex graph up to isomorphism of graphs.)
    
    \item Check whether the set of vertices is closed under taking divisors. Also, check whether two vertices are adjacent if and only if they have nontrivial gcd. If either fails, reject the input graph.
    
    \item If the input graph is indeed some common divisor graph, we can now recover the prime vertex graph from the contracted dual prime graph with each vertex labeled by a product of distinct primes. The vertices of the prime vertex graph are exactly $\{p_1,p_2,\cdots p_k\}$ in Step 2. $p_ip_j$ is an edge in the prime graph if and only if $p_ip_j$ is a vertex in the contracted dual prime graph.
\end{enumerate}
\end{proof}

\begin{remark}
Notice that when we only consider $X=\eo(G)$ for some group $G$, $X$ is indeed closed under taking divisors. So if we are given a graph isomorphic to the dual prime graph $\Gamma(\eo(G))$, we can repeat the above algorithm to recover the prime graph $\Delta(\eo(G))$ of $G$. So we obtained a necessary condition for a graph to be isomorphic to the dual prime graph of some group. \bigskip

We can also consider $X=\eo(G)$ for some group $G$ of some certain class $\mathcal{C}$. If we have a necessary condition on the prime graphs of groups of class $\cC$, we can run the above algorithm and then check whether the resulted prime graph satisfies these conditions. This would give us a stronger necessary condition for a graph to be isomorphic to the dual prime graph of some group of class $\mathcal{C}$. In particular, we can take all classes of groups discussed in the previous sections.\bigskip

One may be able to further strengthen the above necessary condition by considering the possibility of adding back the non-square-free vertices. For example, if the square-free vertices are $\{p,q,pq\}$, then we cannot have that the clique containing $pq$ has 2 vertices while the clique containing $p$ and $q$ both have 1 vertex. This is because if $p^2q$ is an element order, so is $p^2$. However, it seems extremely complicated to answer when adding back the non-square-free vertices is possible.\bigskip

Still, we demonstrate the meaningfulness of the above necessary condition by providing the following example.
\end{remark}
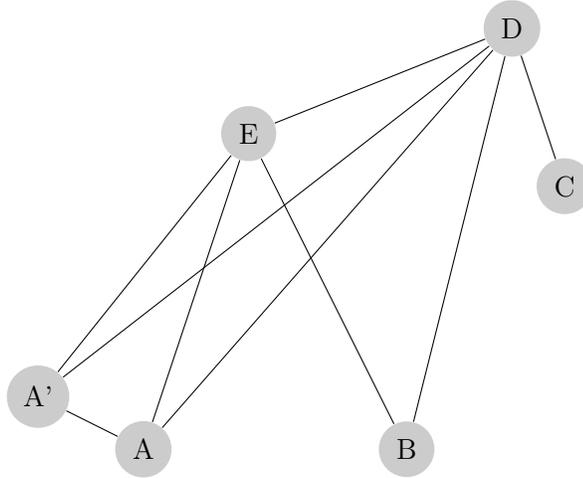
\begin{figure}
    \centering
\begin{tikzpicture}
  [scale=.7,auto=left,every node/.style={circle,fill=black!20}]
  \node (n0) at (2,6) {E};
  \node (n1) at (7,8)  {D};
  \node (n2) at (8,5)  {C};
  \node (n3) at (5,0) {B};
  \node (n4) at (-2,1)  {A'};
  \node (n5) at (0,0)  {A};
 
  \foreach \from/\to in {n0/n1, n0/n3, n0/n4, n0/n5, n1/n2, n1/n3, n1/n4, n1/n5, n4/n5}
    \draw (\from) -- (\to);
\end{tikzpicture}
\caption{A graph that is not isomorphic to the common divisor graph of any $X$ that is closed under taking divisors..}
\end{figure}

\begin{theoremN}
Let $F$ be the graph in Figure 2. $F$ is not isomorphic to the common divisor graph of any $X$ that is closed under taking divisors.
\end{theoremN}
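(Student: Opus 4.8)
The plan is to assume $F\cong\Gamma(X)$ for some $X$ closed under taking divisors and derive a contradiction by running the reconstruction of Theorem~\ref{the algorithm} on $F$. Label the vertices of $F$ as in Figure~2 (namely $A,A',B,C,D,E$).

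\textbf{Step 1: pin down the square-free vertices.} First I would locate the non-square-free vertices via Lemma~\ref{Contract}. The vertices $A$ and $A'$ are adjacent and have the same neighborhood $N(A)=N(A')=\{A,A',D,E\}$, and no other vertex shares this neighborhood, so $\{A,A'\}$ is a \emph{maximal} complete subgraph on which $N(\cdot)$ is constant; by Lemma~\ref{Contract} it equals $\{m\in X:\pi(m)=\pi_0\}$ for $\pi_0=\pi(A)$. Since $X$ is divisor-closed, $\Pi(\pi_0)\in X$, and it is the unique square-free integer with prime set $\pi_0$, so exactly one of $A,A'$ is square-free. On the other hand each of $B,C,D,E$ has a neighborhood shared by no other vertex, so $\{B\},\{C\},\{D\},\{E\}$ are maximal one-element classes, and by Lemma~\ref{Contract} each of $B,C,D,E$ is square-free. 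Hence the contracted common divisor graph $\widehat\Gamma$ of $X$ is isomorphic to the induced subgraph of $F$ on the five vertices $\{A,B,C,D,E\}$ (the automorphism of $F$ interchanging $A$ and $A'$ and fixing the rest lets us take $A$ to be the surviving square-free vertex). Its edge set is $A\text{-}D,\ A\text{-}E,\ B\text{-}D,\ B\text{-}E,\ C\text{-}D,\ D\text{-}E$.

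\textbf{Step 2: contradict Corollary~\ref{posetisom}.} In this five-vertex graph one computes the closed neighborhoods $N(A)=\{A,D,E\}$, $N(B)=\{B,D,E\}$, $N(C)=\{C,D\}$, $N(E)=\{A,B,D,E\}$, $N(D)=\{A,B,C,D,E\}$. Ordering the vertices by inclusion of neighborhoods, the minimal elements are exactly $A,B,C$ (so these are the primes), $E$ covers $A$ and $B$, and $D$ covers both $E$ and $C$. By Corollary~\ref{posetisom} this poset is graded with rank equal to the number of prime divisors; thus $C$ has rank $1$ (being minimal), $E$ has rank $2$ (it covers the two minimal vertices $A,B$), while $D$ covers both $C$ and $E$. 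But a rank function increases by exactly $1$ along every covering relation, so $D$ cannot cover two elements of different ranks. This contradiction shows no divisor-closed $X$ with $\Gamma(X)\cong F$ exists.

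\textbf{Main obstacle.} The only delicate part is the bookkeeping in Step~1: one must check, vertex by vertex, which vertices of $F$ are forced by Lemma~\ref{Contract} to be square-free, so that the contracted graph $\widehat\Gamma$ is pinned down unambiguously as the stated five-vertex graph; after that the contradiction from the grading of Corollary~\ref{posetisom} is immediate. As a sanity check one can see the obstruction concretely: if $D$ were square-free with three prime divisors $p_A,p_B,p_C$ (with $A\leftrightarrow p_A$, etc.), divisor-closure of $X$ would force integers with prime sets $\{p_A,p_C\}$ and $\{p_B,p_C\}$ into $X$, hence square-free vertices strictly between $C$ and $D$ in $\widehat\Gamma$ that are plainly absent — and Corollary~\ref{posetisom} is exactly the packaged form of this observation.
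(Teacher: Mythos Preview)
Your argument is correct and follows essentially the same route as the paper's proof: both run the algorithm of Theorem~\ref{the algorithm}, contract $\{A,A'\}$, and find the minimal vertices $A,B,C$ (the primes). The only cosmetic difference is in how the final contradiction is phrased: the paper labels $A,B,C$ as $p,q,r$, reads off $E=pq$ and $D=pqr$, and observes that the divisor $pr$ is missing from the vertex set; you instead invoke the graded-poset statement of Corollary~\ref{posetisom} to note that $D$ cannot cover elements of two different ranks. Your closing ``sanity check'' is precisely the paper's version, so the two arguments are the same observation packaged two ways.
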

\begin{proof}
We apply the algorithm.
\begin{enumerate}
    \item Notice that the only complete subgraph such that the neighborhoods of all its vertices are identical is $\{A,A'\}$. 
    \item After merging $A$ and $A'$, the poset structure of inclusion of neighborhoods is $D>E,A,B,C$ and $E>A,B$. So $A,B,C$ are minimal elements. Let the value of $A,B,C$ be $p,q,r$, respectively.
    \item Since $E$ is greater than $p$ and $q$, $E=pq$. Similarly, $D=pqr$.
    \item $pqr$ is in the graph, while $pr$ is not. Therefore, we reject $F$.
\end{enumerate}
\end{proof}

\section{Outlook}
In Section 6, we have Theorem \ref{weak result on C-groups} and Theorem \ref{second lemma on C-groups} that give a beautiful but uncomplete description of prime graphs of pseudo-solvable groups. It is natural to wonder what the conditions on the prime graphs would become if we allow, instead of $A_5$ as in the discussion of Section 6, other simple groups, such as $\PSL(2,7)$ or Suzuki groups, in the composition series. \bigskip

Theorem \ref{the algorithm}, the main theorem of Section 7, gives an algorithm that recovers the prime vertex graph from the common divisor graph and very likely rejects an invalid input. We wonder what criteria can be added so that our algorithm rejects all invalid inputs.

\section{Acknowledgements}
This research was conducted at Texas State University under NSF-REU grant DMS-1757233 during the summer of 2020. The fifth author gratefully acknowledges the financial support from the Office of Undergraduate Research at Washington University in St. Louis, and the other authors thankfully recognize NSF's support. The first three and the fifth author thank Texas State University for running the REU online during this difficult period of social distancing and providing a welcoming and supportive work environment. In particular, the sixth author, the director of the REU program, is recognized for conducting an inspired and successful research program. These authors also thank their mentor, the fourth author, for his invaluable advice and guidance throughout this project. The results in this paper were mainly discovered by the fifth author under the guidance of the fourth author, while the other members worked on two other papers.


\end{document}